\numberwithin{equation}{section}             %numera le equazioni con le sezioni
\theoremstyle{definition}                          %stile corsivo
\newtheorem{thm}{Theorem}[section]     %definizione ambiente teorema
\newtheorem{prop}[thm]{Proposition}      %definizione ambiente proposizione
\newtheorem{cor}[thm]{Corollary}            %definizione ambiente corollario
\newtheorem{lem}[thm]{Lemma}             %definizione ambiente lemma
\theoremstyle{definition}               %stile roman
\newtheorem{dfn}[thm]{Definition}%  definizione ambiente definizione
\newtheorem{ese}[thm]{Example}        %definizione ambiente esempio
\newtheorem{nota}[thm]{Notation}
\newtheorem{rem}[thm]{Remark}          
\newtheoremstyle{prova}% hnamei
{3pt}% Space above
{3pt}% Space below
{}% Body font
{}% Indent amounti
{\textbf}% Theorem head font
{.}% hPunctuation after theorem headi
{\newline}% hSpace after theorem headi2
{}% Theorem head spec (can be left empty, meaning `normal')
\theoremstyle{prova}
\newcommand{\cvd}{$\hfill \sqcap \hskip-6.5pt \sqcup$} %(quadratino bianco)
\def\R{\mathbb R}
\def\N{\mathbb N}
\def\Z{\mathbb Z}
\def\E{\mathbb E}
\def\P{\mathbb P}
\def\X{\mathbb X}
\def\Y{\mathbb Y}
\def\Z{\mathbb Z}
\def\F{\mathcal{F}}
\def\shd{{\cal D}}
\def\shf{{\cal F}}
\def\shm{{\cal M}}
\def\1{\mathds{1}}
\def\be{\begin{equation}}
\def\ee{\end{equation}}
\def\bea{\begin{eqnarray}}
\def\eea{\end{eqnarray}}
\def\ba{\begin{array}}
\def\ea{\end{array}}
\def\bs{\begin{split}}
\def\es{\end{split}}
\def\apt{\left(}
\def\cpt{\right)}
\def\apq{\left[}
\def\cpq{\right]}
\def\lra{\longrightarrow}
\def\e{\epsilon}
\begin{document}
\selectlanguage{english}

\title {Generalized covariation and extended Fukushima decompositions
 for Banach valued processes.\\
Application to  windows of Dirichlet processes.}

\author{{\sc Cristina DI GIROLAMI}\footnote{LUISS Guido Carli - Libera
    Universit\`a Internazionale degli Studi Sociali Guido Carli di Roma (Italy).}
%   \thanks{ENSTA ParisTech, Unit\'e de Math\'ematiques appliqu\'ees, 32, Boulevard Victor, F-75739 Paris Cedex 15 (France).}
% E-mail:{\tt
 %     cdigirolami@luiss.it}
  {\sc,}\  
\ {\sc and}\ {\sc Francesco RUSSO }\footnote{ENSTA ParisTech,
Unit\'e de Math\'ematiques appliqu\'ees,
32, Boulevard Victor, F-75739 Paris Cedex 15 (France).}\footnote{INRIA Rocquencourt 
and Cermics Ecole des Ponts, Projet MATHFI. Domaine de Voluceau, BP 105
F-78153 Le Chesnay Cedex (France) .} 
}

\date{May 21th, 2011}
%\date{}   senza niente mette la data, per ometterla \date{}, per scriverla noi \date{18 luglio}
\maketitle

\begin{abstract}
This paper concerns a class of Banach valued processes which 
have finite quadratic variation. The notion introduced
here generalizes the classical one,  of M\'etivier and 
Pellaumail which is quite restrictive. 
We make use of the notion 
of $\chi$-covariation which is a generalized notion of covariation 
for processes with values in two Banach spaces $B_{1}$ and $B_{2}$.
 $\chi$ refers to a suitable subspace of the dual of the projective 
tensor product of $B_{1}$ and $B_{2}$.
We investigate some $C^{1}$ type transformations for various classes of 
stochastic processes admitting a $\chi$-quadratic variation and 
related properties. If $\X^1$ and $\X^2$ admit a $\chi$-covariation,
$F^i: B_i \rightarrow \R$, $i = 1, 2$ are of class 
$C^1$ with some supplementary assumptions then the covariation of
the real processes
$F^1(\X^1)$ and $F^2(\X^2)$ exist. \\
A detailed analysis will be devoted
to  the so-called window processes. 
Let $X$ be a real continuous process; 
the $C([-\tau,0])$-valued  process $X(\cdot)$ defined by
$X_t(y) = X_{t+y}$, where $y \in [-\tau,0]$, is called {\it window} process. 
%We establish some conservation results in the general situation through $C^{1}$ functions $F$ and $G$, $\X$ and $\Y$ being Banach valued processes admitting a $\chi$-covariation.
%Under some supplementary conditions $F(\X)$ and $G(\Y)$ admit a real covariation.
 Special attention is given to transformations of
% conservation properties involving 
window processes  associated with 
Dirichlet and weak Dirichlet processes. In fact we aim to generalize
the following properties valid for $B=\R$. If
 $\X=X$ is a real valued Dirichlet process
% (resp. finite quadratic 
%variation weak Dirichlet) 
and $F:B \rightarrow \R$ 
%(resp. $F:[0,T]\times \R\rightarrow \R$)
 of class $C^{1}(B)$ 
%(resp. $C^{0,1}([0,T]\times \R$) 
then $F(\X)$
%(resp. $F(t, \X_t)$ 
is still a Dirichlet process. 
%(resp. weak Dirichlet).
If $\X=X$ is a weak Dirichlet process with finite
quadratic variation, 
and $F: C^{0,1}([0,T]\times B)$ is of
class $C^{0,1}$, then $\apt F(t, \X_t) \cpt $ is a weak Dirichlet process.
We specify corresponding  results when $B=C([-\tau,0])$ and $\X=X(\cdot)$.
This will consitute a significant Fukushima decomposition
for functionals of windows of (weak) Dirichlet processes.
As applications, we give a new technique for representing 
path-dependent random variables. 
%
%Another stability result can be established when  
%
\end{abstract}

[\textbf{2010 Math Subject Classification}: \ ] \  
 %60G15, 60G22,
{ 60H05, 60H07, 60H10, 60H30, 91G80}

\medskip

%[\textbf{JEL Classification Codes}:\ ] \  {G10, G11, G12, G13} 

\bigskip

{\bf Key words and phrases} Covariation and Quadratic variation,
Calculus via regularization, Infinite dimensional analysis,
 Tensor analysis, Dirichlet processes,
Representation of path dependent random variables, Malliavin calculus,
Generalized Fukushima decomposition.
%\newpage
\section{Introduction}
The notion of covariation is historically defined for two real valued $(\mathcal{F}_{t})$-semimartingales $X$ and $Y$ and it is denoted by $[X,Y]$.
This notion was extended to the case of general processes by mean of discretization techniques, by \cite{fo}, or via regularization, 
see for instance \cite{rv2,Rus05}. In this paper we will follow the language of regularization; for simplicity we suppose that either $X$ or $Y$ are continuous. 
We propose here a slight different approach than \cite{rv2}.
\begin{dfn}			\label{def cov}
Let $X$ and $Y$ be two real processes such that $X$ is continuous. For $\epsilon >0$, we denote 
\begin{equation}
[X,Y]^{\epsilon}_{t}=\int_{0}^{t}\frac{ \left(X_{s+\epsilon}-X_{s} \right)\left(Y_{s+\epsilon}-Y_{s} \right)}{\epsilon} ds  \; , \quad t> 0 \; .
\end{equation}
We say that $X$ and $Y$ admit a \textbf{covariation} if 
\begin{description}
\item [i)] $\lim_{\epsilon\rightarrow 0} [X,Y]^{\epsilon}_{t}$ exists in probability for every $t>0$ and 
\item [ii)] the limiting process in \textbf{i)} admits
 a continuous modification that will be denoted by $[X,Y]$.
\end{description}
\vspace{0.3cm}
If $[X,X]$ exists,
 we say that $X$ is a {\bf finite quadratic variation process} 
(or it has finite quadratic variation) and 
it is also denoted  by $[X]$.
If $[X] = 0$, $X$ is called {\bf zero quadratic variation process}.\\
We say that $(X,Y)$ admits its (or $X$ and $Y$ admit their)
 {\bf mutual covariations}  if $[X]$, $[Y]$ and $[X,Y]$ exist.
\end{dfn}
\begin{rem}		\label{rem R}
\begin{enumerate}
\item Lemma \ref{lem CPUCP} below allows to show that, whenever
 $[X,X]$ exists, then $[X,X]^\varepsilon$ also converges in the ucp sense 
as intended for instance in 
%and $[X,X]$ exists also in
 the \cite{rv2,Rus05} sense. 
The basic results established there are still valid here, see the
 following items.
\item If $X$ and $Y$ are $(\F_{t})$-local semimartingales, then $[X,Y]$ coincides with the classical covariation, see Corollaries 2 and 3 in \cite{Rus05}. 
\item If $X$ (resp. $A$) is a finite (resp. zero) quadratic variation
 process,
 we have $[A,X]=0$. 
\end{enumerate}
\end{rem}
We recall two useful tools related to the notion of covariation 
for real valued processes, see  Lemma 3.1 from \cite{rv4} and
  Propostion 1.2 in \cite{rv2}. 

%We recall Lemma 3.1 from \cite{rv4}. The mentioned lemma states that a sequence of continuous increasing
%processes converging at each time in probability to a continuous process, converges ucp. 
%
%
\begin{lem}    \label{lem CPUCP}		
Let $(Z^{\epsilon})_{\epsilon>0}$ be a family of continuous processes
indexed by $[0,T]$. We suppose the following.

1) $\forall \epsilon> 0$, $t\longrightarrow Z^{\epsilon}_{t}$ is increasing.

2) There is a continuous process $(Z_{t})_{t\in [0,T]}$ such that $Z_{t}^{\epsilon}\rightarrow Z_{t}$ in probability for any $t\in [0,T]$ when $\epsilon$ goes to zero.

Then $Z^{\varepsilon}$ converges to $Z$ ucp, where ucp stands for the uniform convergence in probability on each compact.
\end{lem}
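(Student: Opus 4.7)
The plan is to carry out a Dini-type argument for monotone families, adapted to the stochastic setting, using the continuity of the limit to replace uniform continuity.

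First, I would fix $\eta>0$ and note that the sample-path modulus of continuity
\[
w_{Z}(\delta,\omega) \;=\; \sup_{\substack{s,t\in[0,T]\\ |s-t|\leq\delta}} |Z_{t}(\omega)-Z_{s}(\omega)|
\]
tends to $0$ as $\delta\downarrow 0$ for each fixed $\omega$, by continuity (hence uniform continuity on $[0,T]$) of $Z(\omega)$. Since $\delta\mapsto w_{Z}(\delta)$ is monotone, pointwise a.s.\ convergence gives convergence in probability: for every $\eta>0$ there exists $\delta=\delta(\eta)>0$ with $\mathcal{P}(w_{Z}(\delta)>\eta)<\eta$.

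Next, I would fix a deterministic partition $0=t_{0}<t_{1}<\cdots<t_{N}=T$ of mesh $\leq\delta$. For any $t\in[t_{i},t_{i+1}]$, the monotonicity assumption 1) yields
\[
Z^{\epsilon}_{t_{i}}-Z_{t}\;\leq\; Z^{\epsilon}_{t}-Z_{t}\;\leq\; Z^{\epsilon}_{t_{i+1}}-Z_{t},
\]
so that
\[
|Z^{\epsilon}_{t}-Z_{t}| \;\leq\; \max_{0\leq i\leq N}|Z^{\epsilon}_{t_{i}}-Z_{t_{i}}| \;+\; w_{Z}(\delta).
\]
Taking the supremum over $t\in[0,T]$ preserves this bound. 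Using hypothesis 2) for each of the $N+1$ fixed times $t_{i}$, the first summand on the right converges to $0$ in probability as $\epsilon\to 0$ (a finite maximum of terms tending to $0$ in probability).

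Finally, combining these two facts: for any $\eta>0$, choose $\delta$ as above and then $\epsilon_{0}>0$ so that for every $\epsilon<\epsilon_{0}$,
\[
\mathcal{P}\Bigl(\max_{i}|Z^{\epsilon}_{t_{i}}-Z_{t_{i}}|>\eta\Bigr)<\eta.
\]
Then
\[
\mathcal{P}\Bigl(\sup_{t\in[0,T]}|Z^{\epsilon}_{t}-Z_{t}|>2\eta\Bigr)\;\leq\; 2\eta,
\]
which is exactly ucp convergence since $\eta$ was arbitrary.

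The main (minor) obstacle is handling the \emph{random} modulus of continuity of $Z$: one cannot choose a single deterministic $\delta$ controlling $w_{Z}(\delta,\omega)$ uniformly in $\omega$. The trick is to keep $\delta$ deterministic but only require $w_{Z}(\delta)<\eta$ to hold with high probability, which is legitimate because in-probability convergence is what we need in the conclusion anyway. Everything else is routine: monotonicity sandwiches $Z^{\epsilon}$ between its values at grid points, and pointwise-in-probability convergence at finitely many times is harmless.
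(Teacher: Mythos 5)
Your proof is correct. The paper does not prove this lemma itself — it simply recalls it as Lemma 3.1 from the cited reference of Russo--Vallois — and your Dini-type argument (sandwiching $Z^{\epsilon}_t$ between grid values via monotonicity, controlling the modulus of continuity of the limit only with high probability, and using convergence in probability at the finitely many grid points) is exactly the standard proof of that result.
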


%We recall a useful tool in finite dimension, see Propostion 1.2 in \cite{rv2}. 
\begin{prop}	\label{prFD}
Let $X$ and $Y$ be two continuous 
processes admitting their mutual covariations. 
Then for every cadlag process $H$ on $\R^{+}$ we have 
\[
\int_0^\cdot H_s \frac{\apt X_{s+\e}-X_s\cpt \apt Y_{s+\e}-Y_{s}\cpt}{\e}ds \xrightarrow[\e\lra 0]{ucp} \int_{0}^\cdot H_s d[X,Y]_s  \ .
\]
%where ucp stands for the uniform convergence in probability on each compact.
\end{prop}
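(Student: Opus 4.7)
My plan is to reduce the statement to the quadratic variation case by polarization. The identity
\[
(X_{s+\e}-X_s)(Y_{s+\e}-Y_s) = \tfrac{1}{4}\bigl[((X+Y)_{s+\e}-(X+Y)_s)^2 - ((X-Y)_{s+\e}-(X-Y)_s)^2\bigr]
\]
shows that $\tfrac{1}{\e}(X_{s+\e}-X_s)(Y_{s+\e}-Y_s)\,ds = \tfrac14\bigl(d[X+Y,X+Y]^\e_s - d[X-Y,X-Y]^\e_s\bigr)$; since $(X,Y)$ admits mutual covariations, $X\pm Y$ are of finite quadratic variation. By linearity in the measure, it is enough to prove the proposition when $(X,Y)=(Z,Z)$ for a continuous $Z$ with finite quadratic variation $[Z]$.

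In this reduced setting, $t\mapsto [Z,Z]^\e_t$ is continuous and non-decreasing for every $\e>0$ and, by hypothesis, converges in probability to the continuous $[Z,Z]_t$ at each fixed $t$; Lemma \ref{lem CPUCP} then upgrades this to ucp convergence of $[Z,Z]^\e$ to $[Z,Z]$.

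The proposition is then reduced to the following pathwise Helly-type statement: whenever continuous non-decreasing functions $A^\e$ converge uniformly on $[0,T]$ to a continuous non-decreasing $A$, and $H$ is bounded cadlag on $[0,T]$, one has
\[
\sup_{0 \le t \le T}\Bigl|\int_0^t H_s\,dA^\e_s - \int_0^t H_s\,dA_s\Bigr| \ra 0.
\]
I would prove this by partitioning $[0,T]$ along the large jumps of $H$: for every $\eta>0$, pick a finite partition $0=t_0<\cdots<t_K=T$ containing all jumps of $H$ of size $>\eta$ and such that the oscillation of $H$ on $[t_i,t_{i+1})$ is at most $\eta$. Against the step function $H^\eta := \sum_i H_{t_i}\1_{[t_i,t_{i+1})}$ both integrals are finite linear combinations of increments of $A^\e$ and $A$, and their difference is dominated by $2K\|H\|_\infty \sup_t|A^\e_t - A_t|$, which vanishes as $\e\to 0$ for fixed $\eta$. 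The residual error coming from $|H-H^\eta|\le\eta$ off a finite set (absorbed by the atom-free $dA$) is bounded by $\eta\bigl(A(T)+\sup_\e A^\e(T)\bigr)=O(\eta)$, since $A^\e(T)\to A(T)$. Letting $\eta\to 0$ closes the argument.

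To transfer this pathwise statement back to ucp convergence of the original random expressions, I would argue via subsequences: from any sequence $\e_n\to 0$ extract a subsubsequence along which $A^{\e_{n_k}}\to A$ uniformly on $[0,T]$ almost surely; on that full-measure set the pathwise result applies $\omega$ by $\omega$, which is enough to conclude ucp convergence. The main delicate point I anticipate is the uniformity in $t$ in the Helly-type step: it rests on the fact that the limit $[Z,Z]$ is continuous (so $d[Z,Z]$ has no atoms), which is what makes the cadlag approximation of $H$ harmless and yields the required uniform estimate.
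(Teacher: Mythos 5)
Your argument is correct and complete: the polarization identity legitimately reduces the statement to the quadratic-variation case (since $[X\pm Y]$ exist by bilinearity of the $\varepsilon$-approximants), Lemma \ref{lem CPUCP} upgrades pointwise convergence in probability of the increasing processes $[Z,Z]^{\varepsilon}$ to ucp convergence, and your pathwise Helly-type step (step-function approximation of the cadlag integrand plus atomlessness of $dA^{\varepsilon}$ and $dA$) together with the subsequence characterization of convergence in probability closes the argument. The paper itself gives no proof — it only cites Proposition 1.2 of \cite{rv2} — and your route is essentially the standard one used there, so there is nothing to object to.
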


\begin{dfn}
\begin{enumerate}
\item An $(\F_t)$-adapted real process $A$ is called {\bf $(\mathcal{F}_t)$-martingale orthogonal process} if $[A,N]=0$ for any
continuous $(\mathcal{F}_{t})$-local martingale $N$.
\item A real process $A$ is called {\bf $(\mathcal{F}_{t})$-strongly predictable} if there is $\rho>0$ such that 
$X_{\cdot+\rho}$ is $(\mathcal{F}_t)$-adapted.
\end{enumerate}
\end{dfn}
Previous notion was introduced in \cite{crwdp}; the proposition
below was the object of Corollary 3.11 in \cite{crwdp}.
\begin{prop}		\label{prop STRP}
Let $A$ be an $(\F_t)$-strongly predictable process and $M$ be an $(\F_t)$-local martingale. 
Then $[A,M]=0$. In particular an $(\F_t)$-strongly predictable process is an $(\F_t)$-martingale orthogonal process.
\end{prop}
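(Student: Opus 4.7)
The plan is to exhibit $[A,M]^\epsilon_t$ as an It\^o integral against $M$ whose integrand vanishes as $\epsilon\to 0$. The entire argument is driven by the following consequence of strong predictability: if $\rho>0$ is such that $A_{\cdot+\rho}$ is $(\F_t)$-adapted, then for every $s\ge 0$ and every $\epsilon\in (0,\rho]$ the random variable $A_{s+\epsilon}$ is $\F_{s+\epsilon-\rho}$-measurable, hence $\F_s$-measurable; consequently $A_{s+\epsilon}-A_s$ is $\F_s$-measurable as soon as $\epsilon\le\rho$. This is what converts the $A$-increment into a ``predictable coefficient'' ready to be integrated against $dM$.

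After a routine localization that reduces to the case where $M$ is a bounded continuous martingale and $A$ is bounded on the compact interval under consideration, I would write the $M$-increment as $M_{s+\epsilon}-M_s=\int_0^\infty \1_{(s,s+\epsilon]}(u)\,dM_u$ and apply stochastic Fubini; the predictability hypothesis on the integrand $(A_{s+\epsilon}-A_s)\1_{(s,s+\epsilon]}(u)$ in the variable $u$ is guaranteed by the $\F_s$-measurability noted above. This yields
\begin{equation*}
[A,M]^\epsilon_t \;=\; \int_0^{t+\epsilon} J^\epsilon(u)\, dM_u, \qquad J^\epsilon(u)\;=\;\frac{1}{\epsilon}\int_{(u-\epsilon)_+}^{u\wedge t}(A_{s+\epsilon}-A_s)\,ds .
\end{equation*}
For $u\in(\epsilon,t)$ the change of variable $r=s+\epsilon$ in the first half recasts $J^\epsilon(u)$ as a difference of two sliding averages of $A$, namely $\frac{1}{\epsilon}\int_u^{u+\epsilon}A_r\,dr-\frac{1}{\epsilon}\int_{u-\epsilon}^{u}A_r\,dr$.

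Both averages converge to $A_u$ at every Lebesgue point of $A$ (hence Lebesgue-a.e., and everywhere if $A$ admits a continuous modification), so $J^\epsilon(u)\to 0$ for a.e.\ $u$ while remaining dominated by $2\sup_{r\le t+1}|A_r|$. Dominated convergence then yields $\int_0^{t+\epsilon}(J^\epsilon(u))^2\,d\langle M\rangle_u\to 0$ in probability, and It\^o's isometry upgrades this to $[A,M]^\epsilon_t\to 0$ in $L^2$, hence in probability. Since the limit process is the continuous constant zero, Definition \ref{def cov} gives $[A,M]=0$, and the martingale-orthogonality assertion follows at once.

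The principal technical obstacle lies in the stochastic Fubini step: one must carefully verify predictability in the $u$-variable of the integrand for each fixed $s$, which is exactly where the quantitative strong-predictability parameter $\rho$ enters through the constraint $\epsilon\le\rho$. A secondary subtlety, when $A$ is merely measurable rather than continuous, is that the sliding-average convergence holds only Lebesgue-a.e., but this is still enough for the dominated convergence step against the (continuous) bracket $\langle M\rangle$.
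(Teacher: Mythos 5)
The paper does not actually prove this proposition: it is quoted from Corollary 3.11 of \cite{crwdp}, so there is no internal proof to compare against. Your argument is the standard one (and, as far as the technique goes, the same one this paper uses elsewhere, e.g.\ for the term $I_2$ in Theorem \ref{thm: thm 1}: rewrite the $\epsilon$-approximant as a stochastic integral via stochastic Fubini, check predictability of the resulting integrand, and kill it in the limit through $\int_0^{T}\xi^2(\epsilon,u)\,d[M]_u\to 0$). The decomposition of $J^\epsilon(u)$ into two sliding averages, the identification of $\F_s$-measurability of $A_{s+\epsilon}-A_s$ for $\epsilon\le\rho$ as the point where strong predictability enters, and the localization are all correct; the conclusion $[A,M]=0$ and the martingale-orthogonality statement follow as you say.

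One point is stated too casually. You claim that Lebesgue-a.e.\ convergence $J^\epsilon(u)\to 0$ suffices for the dominated convergence of $\int_0^{t+\epsilon}(J^\epsilon(u))^2\,d\langle M\rangle_u$ ``against the (continuous) bracket.'' Continuity of $\langle M\rangle$ only guarantees that \emph{countable} sets are $d\langle M\rangle$-null; a Lebesgue-null set of non-Lebesgue points can in principle carry positive $d\langle M\rangle$-mass, since $\langle M\rangle$ need not be absolutely continuous with respect to Lebesgue measure. The step is saved by assuming $A$ cadlag (the implicit standing hypothesis here and in \cite{crwdp}, and the only case in which your localization ``$A$ bounded on compacts by stopping'' works anyway): then $J^\epsilon(u)\to A_u-A_{u-}$, which vanishes outside a countable set, and continuity of $\langle M\rangle$ does finish the argument. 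With that reading your proof is complete.
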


Important subclasses of finite quadratic variation processes are
Dirichlet processes. Probably the best denomination should
be F\"ollmer-Dirichlet processes, since a very similar notion
was introduced by \cite{FolDir} in the discretization
framework.

\begin{dfn} 		\label{dfn FtDir}
A real continuous process $X$ is a called \textbf{$(\mathcal{F}_{t})$-Dirichlet
  process} if $X$ admits a decomposition $X=M+A$ where $M$ is an
$(\mathcal{F}_{t})$-local martingale and $A$ is a zero quadratic
variation process. For convenience, we suppose $A_{0}=0$.
\end{dfn}
The decomposition is unique if for
instance $A_0 = 0$, see Proposition 16 in \cite{Rus05}. 
An $(\shf_{t})$-Dirichlet process has in particular finite quadratic variation. 
An $(\mathcal{F}_{t})$-semimartingale is also an $(\mathcal{F}_{t})$-Dirichlet process, 
a locally bounded variation process is in fact a zero quadratic variation process.\\
The concept of $(\shf_{t})$-Dirichlet process can be weakened. An extension of such processes are the so-called 
$(\shf_{t})$-weak Dirichlet processes, which were first introduced and discussed in \cite{er} and \cite{rg2}, but they appeared 
implicitly even in \cite{er2}. Recent developments concerning the subject appear in \cite{cjms,crwdp,st}. 
$(\mathcal{F}_{t})$-weak Dirichlet processes are generally not 
$(\mathcal{F}_{t})$-Dirichlet processes but they still maintain a 
decomposition property.
\begin{dfn}	\label{dfn FtWD} 
A real continuous process $Y$ is called \textbf{$(\mathcal{F}_{t})$-weak Dirichlet} if $Y$
admits a decomposition $Y=M+A$ where $M$ is an
$(\mathcal{F}_{t})$-local 
martingale and $A$ is an $(\mathcal{F}_t)$-martingale orthogonal process. 
For convenience, we will always suppose $A_{0}=0$.
\end{dfn} 
The decomposition is unique, see for instance Remark 3.5 in
\cite{rg2} or again Proposition 16 in \cite{Rus05}. 
Corollary 3.15 in 
\cite{crwdp} makes the following observation. If the
underlying filtration 
$(\shf_{t})$ is the natural 
filtration associated with a Brownian motion $W$, then 
any $(\shf_{t})$-adapted process $A$ is an $(\mathcal{F}_t)$-martingale orthogonal process
if and only if $[A,W]=0$. 
An $(\mathcal{F}_{t})$-Dirichlet process is also an $(\mathcal{F}_{t})$-weak Dirichlet process, a zero quadratic variation process is in fact also an 
$(\shf_{t})$-martingale orthogonal process. 
An $(\mathcal{F}_{t})$-weak Dirichlet process is not necessarily a
finite quadratic variation process; on the other hand, there are 
$(\mathcal{F}_{t})$-weak Dirichlet processes with finite quadratic
variation that are not Dirichlet, see for instance \cite{er2}.
Let $Y$ be an $(\mathcal{F}_{t})$-weak Dirichlet process with
decomposition
 $Y=W+A$, $W$ 
being a $(\mathcal{F}_{t})$-Brownian motion and the process
 $A$ an  $(\mathcal{F}_{t})$-martingale orthogonal process;
% which is $(\mathcal{F}_{t})$-adapted with $[A,N]=0$ for any continuous
% $(\mathcal{F}_{t})$-local martingale $N$
%If, moreover, $A$ is 
if $A$ has with finite quadratic variation, then $Y$ is also a finite
 quadratic variation process and $[Y]=[W]+[A]$.  
In Theorem \ref{thm STWDPR} we will provide another class of examples of $(\shf_{t})$-weak Dirichlet processes 
with finite quadratic variation which are not $(\shf_{t})$-Dirichlet.\\
%In the family of real processes 
An important property in stochastic calculus  concerns the conservation 
of the {\it semimartingale} or {\it Dirichlet process} features 
through some real transformations.
Here are some classical  results. 
\begin{description}
\item{a)} The class of real semimartingales with respect to
 a given filtration is known 
to be stable with respect to $C^{2}(\R)$ transformations, i.e. if 
$f\in C^{2}(\R)$ or difference of convex functions, $X$ is an $(\mathcal{F}_{t})$-semimartingale, then $f(X)$ is still an $(\mathcal{F}_{t})$-semimartingale. 
\item{b)} Finite quadratic variation processes are stable under 
$C^{1}(\R)$ transformations.
\item{c)} Also Dirichlet processes are stable
 with respect to $C^{1}(\R)$ transformations.
% and they  admit a
% decomposition
% result.
 If $f\in C^{1}(\R)$ and $X=M+A$ is a real $(\shf_{t})$-Dirichlet
 process with $M$ the 
$(\shf_{t})$-local martingale and $A$ the zero quadratic variation
 process, 
%and  $f$ of class $C^{1}(\R)$, 
then $f(X)$ is still an $(\shf_{t})$-
Dirichlet process whose decomposition is 
$f(X)=\tilde{M}+\tilde{A}$, where $\tilde{M}_{t}=f(X_{0})+ 
\int_{0}^{t}f'(X_{s})dM_{s}$ and $\tilde{A}_{t}=f(X_{t})-\tilde{M}_{t}$; 
see \cite{ber} and \cite{rvw} for details.
\item{d)}  In some applications, in particular to control theory 
(as illustrated in \cite{rg1}), one often needs to know the nature of
 process $(f(t,X_{t}))$  where $f\in C^{0,1}(\R^{+}\times \R)$ and 
$X$ is a real continuous $(\shf_{t})$-weak Dirichlet process with
 finite quadratic variation. 
It was shown in \cite{rg2}, Proposition 3.10, 
that $\big( f(t, X_{t})\big)$ is 
an $(\shf_{t})$-weak Dirichlet process. 
Obviously, $(f(t,X_{t}))$ does not need to be of finite quadratic variation. 
Consider, as an example, $f$ only depending on time, deterministic, with infinite quadratic variation. 
%This means that the class of weak Dirichlet processes with finite 
%quadratic variation (as well as Dirichlet processes) is stable
%with respect to $C^{0,1}$ transformations.
\end{description}

Let $B_{1}$, $B_{2}$ be two general Banach spaces. 
If $\X$ (resp. $\Y$) is a $B_{1}$ (resp. $B_{2}$) valued stochastic process it is not obvious to define an exploitable 
notion of covariation of $\X$ and $\Y$ even if they are $H$-valued martingales and $B_{1}=B_{2}=H$ is a separable Hilbert space. 
In Definition \ref{def CHICOV} we recall the notion of $\chi$-covariation (resp. $\chi$-quadratic
variation) introduced in \cite{DGR1} in reference to a subspace $\chi$ of  the dual 
of $B_{1} \hat{\otimes}_{\pi} B_{2}$, where $\X$ is $B_{1}$-valued and $\Y$ is $B_{2}$-valued. 
%When $B_{1}$ and $B_{2}$ are the finite dimensional space $\R^{n}$ and $\R^{m}$, 
%$X$ and $Y$ admit all its mutual brackets if and only if $X$ and $Y$ have a $\chi$-covariation 
%with $\chi = (B_{1} \hat{\otimes}_{\pi} B_{2})^*$. In fact there is a canonical identification between the tensor product and space of matrix $\mathbb{M}_{n\times m}(\R)$. 
%A $B$-valued {\it locally semi summable} process $X$ in the sense of  \cite{dincuvisi}, $B$ Banach space, 
%has again a $\chi$-quadratic variation with $\chi = (B \hat{\otimes}_{\pi} B)^*$.
%If $\chi$ is finite dimensional, then Proposition \ref{pr FinDim} gives a simple characterization for $X$ to have a $\chi$-quadratic variation.\\
When $\chi$ equals the whole space $(B_{1}\hat{\otimes}_{\pi}B_{2})^{\ast}$, we say that $\X$ and $\Y$ admit a global covariation. 
In \cite{mp,dincuvisi} one introduces two historical concepts of quadratic variations related to a Banach valued process $\X$,
 the \textbf{real and tensor quadratic variations}. 
In Definition 1.3, Propositions 1.5, 1.6 and Corollary 1.7 of \cite{DGR1} we recover in our regularization language those notions. 
In Proposition 3.16 of \cite{DGR1} we show that whenever $\X$ has a real and tensor quadratic variation then it 
has the global quadratic variation. 
%and they are equal. 
%Section 6 of \cite{DGTesi} shows that if $\X$ admits a quadratic variation in the sense of \cite{mp, dincuvisi, dpz} then $\X$ admits a global quadratic variation. 
%In many interesting cases 
Many Banach space valued processes do not admit a global quadratic 
variation, even though 
they admit a $\chi$-quadratic variation for some suitable $\chi$, 
see \cite{DGR1}, Section 4 for several examples.
In this paper, given different classes of stochastic processes $\X$
 with values in some 
Banach space $B$ and a functional $F: B \rightarrow \R$
with some Fr\'echet regularity,
we are interested in finding natural sufficient conditions
so that $F(\X)$ is a real finite quadratic variation process,
a Dirichlet or a weak Dirichlet process.
%This constitutes a natural generalization of previous
%item b) concerning real valued processes.
% some conservation 
%results related to the notion of quadratic variation of Dirichlet 
%processes in some wide sense for
%Banach valued processes. We focus in particular the following.
%For instance, if a process $\X$
% with values in a Banach space $B$
%has a $\chi$-quadratic variation for some space $\chi$
%and
% a real valued functional
% $F$ 
%defined on $B$ of class $C^1$, 
%we are interested in showing that the real processes $F(\X)$,
%generalizing several aspects of the finite dimensional case. 
%For a general Banach space $B$ we obtain the following.
\begin{itemize}
\item In Theorem \ref{thm SCQVA} we show that if $\X$ is a $B$-valued
 process
 %and $\chi$ is a a Banach subspace of the dual of $B\hat{\otimes}_\pi B$
 with $\chi$-quadratic variation and 
$F:B\rightarrow \R$ is of class $C^{1}$ Fr\'echet with some supplementary 
properties on $DF$, then $F(\X)$ is a real finite quadratic variation 
process.
This constitutes a natural generalization of previous
item b) concerning real valued processes.
\end{itemize}
A typical Banach space which justifies the introduction of the notion 
of $\chi$-quadratic variation is
%Concerning window processes, setting
 $B=C([-\tau,0])$ for some $\tau>0$. 
If $X$ is a real continuous process, 
the $C([-\tau,0])$-valued  process $X(\cdot)$ defined by
$X_t(y) = X_{t+y}$, where $y \in [-\tau,0]$, is called {\bf window
process} (associated with $X$). 
If $X$ is an $(\shf_t)$-Dirichlet (resp.  $(\shf_t)$-weak Dirichlet),
the process $X(\cdot)$ is called {\bf window $(\shf_t)$-Dirichlet} (resp.
{\bf$(\shf_t)$-weak Dirichlet) process}.
For window processes, we obtain more specific results.
%xxxxxxxxxx WINDOW Brownian motion?
We introduce here a notation which will be re-defined in Section 
\ref{sec: pre}.
Let $a$ be the vector $(a_{N},a_{N-1}, \ldots, a_{1},0)$ 
which identifies $N+1$ fixed points on $[-\tau,0]$, $-\tau=a_{N} < a_{N-1}< \ldots a_{1}< a_{0}=0$.
 Space $\mathcal{D}_{a}([-\tau,0])$ denotes the Hilbert space of
 measures $\mu$ on $[-\tau,0]$ which can be written as a sum of
 Dirac's measures
 concentrated on  points  $a_i$, i.e.
 $\mu(dx)=\sum_{i=0}^{N}\lambda_i \delta_{a_i}(dx)$, 
$\lambda_i\in \R$. $\mathcal{D}_{0}([-\tau,0])$ denotes
 the space $\mathcal{D}_{a}([-\tau,0])$ when 
 $a=(0)$, i.e. the linear space of multiples of Dirac's measure concentrated
 in $0$.
The following items are generalizations  of properties  c), d) 
valid for  real processes. We set $B=C([-\tau,0])$.
\begin{itemize} 
\item Let $X$ be an $(\shf_t)$-Dirichlet process, with associated 
window process 
$\X = X(\cdot)$ and again $F:B\longrightarrow \R$ of class $C^{1}$ Fr\'echet. 
Theorem \ref{thm STWDPR} gives conditions so that $F(\X)$ is a real $(\shf_t)$-weak Dirichlet process. 
Under a stronger condition, Theorem \ref{thm STWDIRP} shows that $F(\X)$ is a real $(\shf_t)$-Dirichlet process.
More precisely Theorem \ref{thm STWDPR} (resp. Theorem \ref{thm STWDIRP}) states the following.
Let $F:  B\longrightarrow \R$ be of class $C^{1}\big(B \big)$ in the Fr\'echet sense such that the 
first derivative $DF(\eta)$ at each point $\eta \in B$, belongs to $\mathcal{D}_{a}([-\tau,0])\oplus L^{2}([-\tau,0])$ (resp. $\mathcal{D}_{0}([-\tau,0])\oplus L^{2}([-\tau,0])$). 
We suppose moreover that $DF$, with values in the mentioned space,
is continuous. Then $F(\mathbb{X})$ is a real $(\shf_t)$-weak Dirichlet process (resp. Dirichlet process).
\item 
 Previous item is extended to the case 
when $X$ is an $(\shf_t)$- weak Dirichlet process
with finite quadratic variation in Theorem \ref{thm: thm 1}.
Let $F: [0,T] \times B \longrightarrow \R$
as time dependent of class $C^{0,1}$. Similarly to the case when $B$ is finite dimensional, \cite{rg2}, 
we cannot expect $\big(F(t,X_{t}(\cdot))\big)$ to be a Dirichlet process. 
In general it will not even be a finite quadratic variation process. % if the dependence on $t$ is very irregular. 
In Theorem \ref{thm: thm 1} we state the following. Suppose that the first derivative $DF(t,\eta)$, 
at each point $(t, \eta) \in [0,T] \times C([-\tau,0])$, belongs to $\mathcal{D}_{a}([-\tau,0])\oplus L^{2}([-\tau,0])$. 
We suppose again that $DF$, with values in the mentioned space, is continuous. Then 
$\big(F(t,X_{t}(\cdot))\big)$ is at least a weak Dirichlet process. 
\item If $D F$ does not necessarily live in  
$\mathcal{D}_{a}([-\tau,0]^{2})\oplus L^{2}([-\tau,0])$,
and in some cases even if $t \mapsto  DF(t,\eta)$  for fixed $\eta$ is only stepwise continuous  
but it fulfills a technical condition called the {\it 
support predictability condition} (see Definition \ref{del SPC}),
it is possible to recover the conclusion of previous statement, see Theorems \ref{thm:thm1B} and \ref{thm ter}.
\end{itemize}
One of the consequences  of the paper is that, under some modest conditions on a functional
$F: [0,T] \times B \rightarrow \R$ and on a $B$-valued  process $\X$
which is a window of a semimartingale (with $B= C([-\tau,0])$), it is possible to 
characterize  $F(t, \X_t)$ through a Fukushima type decomposition,
which is unique, and it plays the role of It\^o type formula under
weak conditions. 
The Fukushima decomposition given in Theorems \ref{thm: thm 1}, \ref{thm:thm1B} and \ref{thm ter} 
is innovating at the level of stochastic analysis. In fact it does not concern the decomposition of a functional of an infinite 
dimensional Dirichlet process (or maybe weak Dirichlet); 
in fact, even the window of a semimartingale
is generally not a $B$-valued semimartingale, see Proposition 4.7 
in \cite{DGR1}.\\
In Section \ref{sec: last}, we consider a diffusion  $X$ such that 
$X_t = X_0 + \int_0^t\sigma(r,X_r)dW_r +\int_{0}^t b(r,X_r) dr $
and $\sigma, b: [0,T] \times \R \rightarrow \R$ of class $C^{0,1}$ 
whose partial derivative in the second variable is bounded.
Even if $\sigma$ is possibly degenerate, we give representations
of a class of path dependent random variables $h$ depending on the 
whole history of $X$ via a functional $C^{1}$ Fr\'echet.
A first representation result is Proposition \ref{pr PCS} which is
based on Theorem 
\ref{thm META}. When the process $X$ is a standard Brownian motion we allow the functional not be smooth, see Section \ref{esemSINGOLARE}.
In Section \ref{secPDE} we consider $h$ of the form $f\apt X_{t_1}, \ldots, X_{t_N} \cpt$ $0< t_1< \ldots < t_N=T$ and $f\in C^{2}(\R^{N})$ with polynomial growth. 
In this case the representation can be associated with $N$ PDEs, each-one stated when the time $t$ varies in the subinterval $(t_{i-1},t_{i})$, for $1\leq i\leq N$.
%%%%%%%%%%%%%
\par
The paper is organized as follows. After this introduction, 
Section \ref{sec: pre} contains general notations and some preliminaries. 
Section \ref{sec: chiqv} will be devoted to the definition of $\chi$-covariation and $\chi$-quadratic variation and some related results. 
In that section we will remind the evaluation of $\chi$-covariation 
and  $\chi$-quadratic variation
 for different classes of processes. 
In Section \ref{sec: stab}, we discuss how a $B$-valued process having some $\chi$-quadratic variation
transforms. In Section \ref{sec:Dirichlet} we concentrate on the case $B= C([-\tau,0]$ and on generalized 
Fukushima decomposition of windows of Dirichlet or weak Dirichlet processes. 
At Section \ref{sec: last} we provide an application to the problem of recovering quasi-explicit
representation formulae for square integrable random variables.

\section{Preliminaries}		\label{sec: pre}

In this section we recall some definitions and notations concerning the
whole  paper.
Let $A$ and $B$ be two general sets such that $A\subset B$;
 $1_{A}: B\rightarrow\{ 0,1\}$ will denote the indicator function of
 the set $A$, so $1_{A}(x)=1$ if $x\in A$ and $1_{A}(x)=0$ if $x\notin A$. 
We also write
 $1_{A}(x)=1_{\{ x\in A\}}$.
Throughout this paper we will denote by 
$(\Omega,\mathcal{F},\P)$ a fixed probability space, equipped with
a given filtration $\mathbb{F}=(\mathcal{F}_{t})_{t\geq 0}$ 
%$(\shf_{t}; 0\leq t\leq T)$ 
fulfilling the usual conditions. 
Let $E$ and $F$ be Banach spaces over the scalar field
 $\mathbb{R}$. We shall denote by $L(E;F)$ the Banach space of $F$-valued bounded linear maps on $E$ with the norm given by 
 $\|\phi\| =\sup\{ \| \phi(e)\|_F\,:\, \|e\|_{E}\leq 1\}$. 
 When $F=\R$, the topological dual space of $E$ will be denoted simply by $E^{\ast}$. 
If $\phi$ is a linear functional on $E$, we shall denote the value 
of $\phi$ at an element $e\in E$ either by $\phi(e)$ or $\langle \phi, e \rangle$ or even 
$\prescript{}{E^{\ast}}{\langle} \phi, e \rangle_{E}$.
Throughout the paper the symbols $\langle\cdot,\cdot\rangle$ will denote always some type of duality 
that will change depending on the context.
We shall denote the space of $\R$-valued bounded bilinear forms on the product $E\times F$ by $\mathcal{B}(E\times F)$ with the norm given by 
 $\|\phi\|_{\mathcal{B}}
 =\sup\{ | \phi(e,f) |\,:\, \|e\|_{E}\leq 1; \|f\|_{F} \leq 1 \}$. 
 If $a < b$ are two real numbers, $C([a,b])$ will denote the Banach linear
 space of real continuous functions equipped with the uniform norm denoted by $\|\cdot\|_{\infty}$. 
 If $K$ is a compact subset
of $\R^n$, $\mathcal{M}(K)$ will denote the dual space $C(K)^{\ast}$, i.e. the so-called set of finite signed measures on $K$. 
Our principal references about functional analysis and Banach spaces topologies are \cite{ds, bre}.\\
The capital letters $\X,\Y,\Z$ (resp. $X,Y,Z$) will generally denote Banach valued (resp. real valued) processes indexed by the time variable $t\in [0,T]$ with $T>0$. 
A stochastic process $\X$ will  also be denoted by $(\X_{t})_{t\in[0,T]}$.
A $B$-valued (resp. $\R$-valued) stochastic process
% $\X$ (resp. $X$)
 $\X:\Omega\times [0,T]\rightarrow B$ (resp. $\X:\Omega\times [0,T]\rightarrow \R$) 
is said to be measurable if $\X:\Omega\times [0,T]\longrightarrow B$
(resp. $\X:\Omega\times [0,T]\rightarrow \R$) is measurable with
respect to the $\sigma$-algebras $\mathcal{F}\otimes
\mathcal{B}or([0,T])$ and $\mathcal{B}or(B)$
(resp. $\mathcal{B}or(\R)$), $\mathcal{B}or$ denoting the
corresponding Borel $\sigma$-algebra. 
We recall that 
$\mathbb{X}: 
\Omega\times [0,T]\longrightarrow B$ 
(resp. $\R$) 
is said to be \textit{strongly measurable} 
(or \textit{measurable in the Bochner sense}) 
if it is the limit of measurable countable valued functions. 
If $\X$ is measurable and cadlag with $B$ separable then $\X$ is strongly measurable. 
%%because $B$ is separable. 
If $B$ is finite dimensional then a measurable process $\X$ is also strongly measurable. 
%%$\X$ will always be supposed strongly measurable in the Bochner sense
%%(resp. measurable w.r.t. the product sigma-algebra).
If nothing else is mentioned, all the processes indexed by $[0,T]$ will be naturally prolonged by continuity setting $\X_{t}=\X_{0}$ for
$t\leq 0$ and $\X_{t}=\X_{T}$ for $t\geq T$.
A similar convention is done for deterministic functions.
 A sequence $(\X^{n})_{n \in \N}$  of continuous $B$-valued processes
indexed by $[0,T]$, will be said to converge \textit{ucp}
(\textit{uniformly convergence in probability}) to a process $\X$ if
$\sup_{0\leq t\leq T}\|\X^{n}_t-\X_t\|_{B}$ converges to zero in probability
when $n \rightarrow \infty$. 
The Fr\'echet space $\mathscr{C}([0,T])$ 
will denote the linear space of continuous real processes 
equipped with the ucp topology and the metric 
$d(\X,\Y)=\mathbb{E}\left[ \sup_{t\in [0,T]}|\X_{t}-\Y_{t}|\wedge 1\right]$. 
%The space $\mathscr{C}([0,T])$ is not a Banach space but equipped with this metric is a Fr\'echet space (or $F$-space shortly)
%see Definition II.1.10 in \cite{ds}. For more details about $F$-spaces and their properties see section II.1 in \cite{ds}.\\
%
%
%
We go on with other notations.\\
The direct sum of two Banach spaces $E_{1}$ and $E_{2}$ will be denoted by $E:=E_{1}\oplus E_{2}$. $E$ is still a Banach space under the $2$-norm defined by 
$\|e_{1}+e_{2}\|_{E}:=(\|e_{1}\|^{2}_{E_{1}} + \|e_{2}\|^{2}_{E_{2}} )^{1/2} $. 
If each of the spaces $E_{i}$ is a Hilbert space then $E$ coincides with the uniquely determined Hilbert space with scalar product $\langle e,f\rangle_{E}=
\langle e_{1}+e_{2},f_{1}+f_{2}\rangle_{E}=\sum_{i=1}^{2}\langle e_{i},f_{i}\rangle_{i}$, where $\langle \cdot ,\cdot \rangle_{i}$ 
is the scalar product in $E_{i}$.\\% Thus the norm in a direct sum of Hilbert spaces is always given by the $p$-norm considering $p=2$.\\
%$E_{1}$ and $E_{2}$ are closed normed subspace of $E$, it holds $\overline{Span\{E_{1},E_{2}\}}=E_{1}\oplus E_{2}$.\\
We recall now some basic concepts and results about
 tensor products of two Banach spaces $E$ and $F$. For details and a more
 complete description of these arguments, 
 the reader may refer to \cite{rr}, the case with $E$ and $F$ Hilbert spaces being particularly
  exhaustive in \cite{nevpg}. 
  If $E$ and $F$ are Banach spaces, 
 the Banach space $E\hat{\otimes }_{\pi}F$ (resp. $E\hat{\otimes }_{h}F$) denotes 
 the \textit{projective (resp. Hilbert) tensor product} of the Banach spaces $E$ and $F$. 
If $E$ and $F$ are Hilbert spaces the Hilbert tensor product $E\hat{\otimes }_{h}F$ is a Hilbert space. 
We recall that $E\hat{\otimes }_{\pi}F$ is obtained by a completion of the algebraic tensor product $E\otimes F$ equipped with the projective 
norm $\pi$. Let $\{ x_i\}_{1\leq i \leq n}\subset E$ and $\{ y_i\}_{1\leq i\leq n} \subset F$, for a general element $u=\sum_{i=1}^{n}x_{i}\otimes y_{i} $ in $E\otimes F$, 
$\pi(u)=\inf \left\{  \sum_{i=1}^{n}\|x_{i}\| \, \| y_{i} \|:\, u=\sum_{i=1}^{n}x_{i}\otimes y_{i} \right\}$.
Let $e\in E$ and $f\in F$, 
symbol $e\otimes f$ (resp. $e\otimes ^{2}$) will denote a basic element of the algebraic tensor product $E\otimes F$ (resp. $E\otimes E$).
The space $(E\hat{\otimes}_{\pi}F)^{\ast}$ denotes, as usual, the topological dual of the projective tensor product.
%equipped with the operator norm, of course this is a Banach space.
There is an isometric isomorphism between the dual space of the projective tensor product and the space of bounded bilinear forms equipped with the usual norm: 
\begin{equation}				\label{eq IDBILPIDUAL}
(E\hat{\otimes}_{\pi} F)^{\ast}\cong \mathcal{B}(E\times F)\cong L(E;F^{\ast}) \ .
\end{equation}
Through relation 
\begin{equation}
\prescript{}{(E\hat{\otimes}_{\pi}F)^{\ast}}{\langle} T, \sum_{i=1}^{n}x_{i}\otimes y_{i}\rangle_{E\hat{\otimes}_{\pi}F} = 
T\left(\sum_{i=1}^{n}x_{i}\otimes y_{i} \right)=\sum_{i=1}^{n}\tilde{T}(x_{i},y_{i})=\sum _{i=1}^{n}\bar{T}(x_{i})(y_{i})
\end{equation}
we associate a bounded bilinear form $\tilde{T}\in  \mathcal{B}(E\times F)$, a bounded linear functional $T$ on $E\hat{\otimes}_{\pi}F$ and an element $\bar{T}\in L(E;F^{\ast})$. 
In the sequel that identification will be often used without explicit mention.\\
The importance of tensor product spaces and their duals 
is justified  first of all from identification 
\eqref{eq IDBILPIDUAL}. 
In fact the second order Fr\'echet derivative of a real function defined on a Banach space $E$ belongs to $\mathcal{B}(E\times E)$. 
We recall another important property.
\begin{equation}		\label{eq 1.15bis}
\mathcal{M}([-\tau,0]^{2})\subset
\left(C([-\tau,0])\hat{\otimes}_{\pi}C([-\tau,0])\right)^{\ast} \, .
\end{equation}
Let $\eta_{1}$, $\eta_{2}$ be two elements in $C([-\tau,0])$. The element $\eta_{1}\otimes \eta_{2}$ in the 
algebraic tensor product $C([-\tau,0])\otimes^{2}$ will be identified with the element $\eta$ in $C([-\tau,0]^{2})$ 
defined by $\eta(x,y)=\eta_{1}(x)\eta_{2}(y)$ for all $x$, $y$ in $[-\tau,0]$. So if $\mu$ is a measure on
 $\mathcal{M}([-\tau,0]^{2})$, the pairing duality $\prescript{}{\mathcal{M}([-\tau,0]^{2})}{\langle} \mu, \eta_{1}\otimes \eta_{2}\rangle_{C([-\tau,0]^{2})} $ has to be understood as the following pairing duality: 
\begin{equation}\label{eqai}	
\prescript{}{\mathcal{M}([-\tau,0]^{2})}{\langle} \mu, \eta\rangle_{C([-\tau,0]^{2})} =\int_{[-\tau,0]^{2}}\eta(x,y)\mu(dx,dy)=\int_{[-\tau,0]^{2}}\eta_{1}(x)\eta_{2}(y)\mu(dx,dy)  \; .
\end{equation}

Along the paper, spaces $\mathcal{M}([-\tau,0])$ and $\mathcal{M}([-\tau,0]^{2})$ and their subsets will play a central role. 
We will introduce some other notations that will be used in the sequel.
Let $-\tau=a_{N} < a_{N-1}< \ldots a_{1}< a_{0}=0$ be $N+1$ fixed points in $[-\tau,0]$. 
Symbol $a$ 
%and $A$ 
will refer to the vector $(a_{N},a_{N-1}, \ldots, a_{1},0)$ which 
%and to the matrix $(A_{i,j})_{0\leq i,j\leq N}=(\{a_{i},a_{j}\})_{0\leq i,j\leq N}$. 
%Vector $a$ 
identifies $N+1$ points on $[-\tau,0]$. 
% and matrix $A$ identifies $(N+1)^{2}$ points on $[-\tau,0]^{2}$.\\
\begin{itemize}
\item
Symbol $\shd_{i}([-\tau,0])$ (shortly $\shd_{i}$), will denote the 			
one dimensional Hilbert space of multiples of 
Dirac's measure concentrated at $a_{i}\in [-\tau,0]$ , i.e. 
$
%\begin{equation}			\label{eq-def Di}
\mathcal{D}_{i} ([-\tau,0]):= 
\{\mu \in \mathcal{M}([-\tau,0]);\,s.t. \mu(dx)=\lambda \,\delta_{a_{i}}(dx) \textrm{ with } \lambda\in \mathbb{R} \}\; ;
%\end{equation}
$
the space $\shd_{0}$ will be the space of multiples of Dirac measure concentrated at $0$.
\item Symbol $\shd_{a}([-\tau,0])$ (shortly $\shd_{a}$), will denote the $(N+1)$-dimensional Hilbert space of 
multiples of Dirac's measure concentrated at $a_{i}\in [-\tau,0]$, $0\leq i\leq N$ , i.e. 
\begin{equation}			\label{eq-def Da}
\mathcal{D}_{a} ([-\tau,0]):= 
\{\mu \in \mathcal{M}([-\tau,0]);\,s.t. \mu(dx)=\sum_{i=0}^{N} \lambda_i \,\delta_{a_{i}}(dx) \textrm{ with } \lambda_i \in \mathbb{R} \} =\bigoplus_{i=0}^{N}\shd_{i}\;.
\end{equation}

\item
Symbol $\shd_{i,j}([-\tau,0]^{2})$ (shortly $\shd_{i,j}$), will denote the one dimensional Hilbert space of the
 multiples of Dirac measure concentrated at 
$(a_{i},a_{j})\in [-\tau,0]^{2}$, i.e.
$
%\begin{equation}			\label{eq-def Dij}
\mathcal{D}_{i,j}([-\tau,0]^{2}):= \{ \mu \in \mathcal{M}([-\tau,0]^{2});\; s.t. \mu(dx,dy)=\lambda \,\delta_{a_{i}}(dx)\delta_{a_{j}}(dy) \textrm{ with } \lambda \in \mathbb{R} \}
\cong \shd_{i}\hat{\otimes}_{h} \shd_{j}\; .
%\end{equation}
$
The space $\shd_{0,0}$ will be the space of Dirac's measures concentrated at $(0,0)$.
\item
$L^{2}([-\tau,0])$ is a Hilbert subspace of $\shm([-\tau,0])$, as well as $L^{2}([-\tau,0]^{2})\cong L^{2}([-\tau,0])\hat{\otimes}_{h}^{2}$ is a Hilbert subspace of 
$\shm([-\tau,0]^{2})$, both equipped with the norm derived from the usual scalar product.
\item
$\shd_{i}([-\tau,0]) \oplus  L^{2}([-\tau,0]) $ is a Hilbert subspace of $\shm([-\tau,0])$. 
The particular case when $i=0$, the space $\shd_{0}([-\tau,0]) \oplus  L^{2}([-\tau,0]) $, shortly $\shd_{0} \oplus  L^{2}$, 
will be often recalled in the paper.
\item
$\shd_{i}([-\tau,0])\hat{\otimes}_{h} L^{2}([-\tau,0])$ is a Hilbert subspace of $\shm([-\tau,0]^{2})$. 
%
%\item  $Diag([-\tau,0]^{2})$ (shortly $Diag$), will denote the subset 	of $\mathcal{M}([-\tau,0]^{2})$ defined as follows:
%%
%\begin{equation} 			\label{eq-def diag}
%Diag([-\tau,0]^{2}):=\left\{\mu\in \mathcal{M}([-\tau,0]^{2})\, s.t.\,
%\mu(dx,dy)=g(x)\delta_{y}(dx)dy;\, g \in L^{\infty}([-\tau,0])
%\right\}
%\; .
%\end{equation}
%$Diag([-\tau,0]^{2})$, equipped with the norm $\| \mu \|_{Diag([-\tau,0]^{2})}= \| g\|_{\infty}$, is a Banach space. 
%Let $f$ be a function in $C([-\tau,0]^{2})$; the pairing duality between $f$ and $\mu(dx,dy)=g(x)\delta_{y}(dx)dy\in Diag$ gives
%%
%\begin{equation}			\label{eq-def dualita con misura diag}  
%\prescript{}{C([-\tau,0]^{2})}{\langle} f,\mu \rangle_{ Diag([-\tau,0]^{2})}=\int_{[-\tau,0]^{2}} f(x,y) \mu(dx,dy)= \int_{[-\tau,0]^{2}}f(x,y) g(x)\delta_{y}(dx)dy=\int_{-\tau}^{0}f(x,x)g(x)dx\; .
%\end{equation}
%%
%
%
\item
$\chi^{2}([-\tau,0]^{2})$, $\chi^{2}$ shortly, the Hilbert space defined as follows.
\begin{equation}   \label{eq-def chi2}
L^{2}([-\tau,0]^{2}) \oplus 
\bigoplus_{i=0}^{N} \left(  L^{2}([-\tau,0]) \hat{\otimes}_{h}\mathcal{D}_{i}([-\tau,0]) \right) \oplus
\bigoplus_{i=0}^{N} \left( \mathcal{D}_{i} ([-\tau,0]) \hat{\otimes}_{h} L^{2}([-\tau,0])\right)  \oplus 
\bigoplus_{i,j=0}^{N}\shd_{i,j}([-\tau,0]^{2}) \; .
\end{equation}
\item
As a particular case of $\chi^{2}([-\tau,0]^{2})$ we will denote $\chi^{0}([-\tau,0]^{2})$, $\chi^{0}$ shortly, 
the subspace of measures defined as 
\begin{equation}   \label{eq-def chi0}
\chi^{0} ([-\tau,0]^{2})=
L^{2}([-\tau,0]^{2}) \oplus
L^{2}([-\tau,0])\hat{\otimes}_{h}\mathcal{D}_{0}([-\tau,0]) \oplus
\mathcal{D}_{0}([-\tau,0])\hat{\otimes}_{h} L^{2}([-\tau,0])\oplus
\mathcal{D}_{0,0}([-\tau,0]^{2})  \; .
\end{equation}
\end{itemize}
Let $B$ be a Banach space. A function $F:[0,T] \times B \longrightarrow 
\mathbb{R} $, is said to be $C^{1,2}([0,T]\times B)$ (Fr\'echet), 
or $C^{1,2}$ (Fr\'echet), if the following properties are fulfilled.
\begin{itemize}
\item $F$ is once continuously differentiable; the partial derivative
 with  respect to $t$ will be denoted by $\partial_{t} F :[0,T]\times B 
\longrightarrow \mathbb{R}$;
%with respect to $I$ 
\item for any $t \in [0,T]$,  $x \mapsto DF(t,x)$ is of class $C^1$
where $DF:[0,T]\times B \longrightarrow B^*$ denotes the derivative with respect to the second argument; 
\item the second order derivative with respect to the 
second argument $D^2F: [0,T] \times B \rightarrow (B\hat{\otimes}_\pi B)^\ast $
is  continuous.
\end{itemize}
If $B=C([-\tau,0])$, we remark that $DF$ defined on $[0,T]\times B$ takes values in $B^* \cong \mathcal{M}([-\tau,0])$.
For all $(t,\eta)\in [0,T]\times C([-\tau,0])$, we will denote by $D_{dx} F(t,\eta )$ the measure such that 
\begin{equation}   \label{eq duality deriv prima}
\prescript{}{\mathcal{M}([-\tau,0])}{\langle} DF(t,\eta), h \rangle_{C([-\tau,0])}=DF(t,\eta)(h)=\int_{[-\tau,0]} h(x)D_{dx} F(t,\eta )  \hspace{1cm} \forall \; h\in C([-\tau,0]).
\end{equation}
Recalling \eqref{eq 1.15bis}, if $D^{2}F\,(t, \eta)\in \mathcal{M}([-\tau,0]^{2})$ 
for all $(t,\eta) \in [0,T]\times C([-\tau,0])$ (which will happen in most of
 the treated cases) 
we will denote with $D^{2}_{dx\,dy} F(t, \eta)$, or $D_{dx}D_{dy}F(t,\eta)$, the measure on $[-\tau,0]^{2}$ such that 
\begin{equation}		\label{eq duality deriv seconda}
\prescript{}{\mathcal{M}([-\tau,0]^{2})}{\langle} D^{2}F(t,\eta), g \rangle_{C([-\tau,0]^{2})} =
D^{2}F(t,\eta)(g)  = \int_{[-\tau,0]^{2}} g(x,y)\,D^{2}_{dx\,dy} F(t,\eta) \hspace{1cm} \forall \; g\in C([-\tau,0]^{2}).\\
\end{equation}
A useful notation that will be used along all the paper is the following.
\begin{nota}			\label{nota MEASURE}
Let $F:[0,T]\times C([-\tau,0])\longrightarrow \R$ be a Fr\'echet differentiable function, with Fr\'echet derivative 
$DF:[0,T]\times C([-\tau,0])\longrightarrow \mathcal{M}([-\tau,0])$.
 For any given $(t,\eta)\in [0,T]\times C([-\tau,0])$ and $a \in [-\tau,0]$, 
we denote by $D^{ac}F\;(t,\eta)$ the absolutely continuous part of measure 
$DF\,(t,\eta)$, and by 
$D^{\delta_{a}}F \,(t,\eta) := DF \,(t,\eta)(\{a\})$.
%, $t \in [0,T], 
%\eta \in C([-\tau,0]).$
%\delta_{a}$ the Dirac part at $\{a\}$ of measure $DF\,(t, \eta)$, i.e. $DF\,(t, \eta)(\{a\})=D^{\delta_{a}}F \,(t,\eta)$. \\
For every $\eta \in C([-\tau,0])$, we observe that 
$t \mapsto D^{\delta_{a}}F(t,\eta)$
 is a real valued function. \\
We denote $D^{\perp}F\, (t,\eta)= DF\, (t,\eta)-DF\, (t,\eta)
(\{ 0\})\delta_{0}$.
\end{nota}
\begin{ese}{If for example $DF(t,\eta)\in \shd_{0}\oplus L^{2}([-\tau,0])$
 for every $(t,\eta)\in [0,T]\times C([-\tau,0])$, then we will often 
write}
\begin{equation}		\label{eq notatMEASURES}
D_{dx} F\,(t,\eta)=D^{\delta_{0}}F\,(t,\eta)\delta_{0}(dx)  +D^{ac}_{x}F\,(t,\eta)dx \; .
\end{equation}
\end{ese}
%
%%
%%%%%%   quadratic variation con corollario e definizione insieme
%
%%
\section{Notions of $\chi$-covariation between Banach valued processes}		\label{sec: chiqv}
Let $B_{1}$, $B_{2}$ be two Banach spaces. Whenever $B_1=B_2$ we will denote it simply by $B$. 
%, $\X$ (resp. $\Y$) be a  $B_{1}$-valued (resp. $B_{2}$-valued) stochastic process. 
%
\begin{dfn}		\label{DefChiCOV}
A Banach subspace $\left( \chi, \|\cdot\|_{\chi} \right)$ continuously injected into $(B_{1}\hat{\otimes}_{\pi}B_{2})^{\ast}$ 
will be called a {\bf Chi-subspace} (of $(B_{1}\hat{\otimes}_{\pi}B_{2})^{\ast}$).
\end{dfn}
 \begin{rem}\label{RPairing}
Obviously the pairing between  $(B_{1}\hat{\otimes}_{\pi}B_{2})^{\ast}$ 
and  $(B_{1}\hat{\otimes}_{\pi}B_{2})^{\ast \ast}$
is compatible with the paring between
$\chi$ and $\chi^\ast$. 
\end{rem}
\begin{ese} When $B=C([-\tau, 0])$, typical examples of Chi-subspace of $(B\hat{\otimes}_{\pi}B)^{\ast}$ 
are $\mathcal{M}([-\tau,0]^{2})$ equipped with the total variation
norm 
and all Hilbert closed subspaces of $\mathcal{M}([-\tau,0]^{2})$. 
For instance $L^{2}([-\tau,0]^{2})$, $\shd_{i}([-\tau,0])\hat{\otimes}_{h} L^{2}([-\tau,0])$, $\shd_{i, j}([-\tau,0]^{2})$, for $1\leq i, j \leq N$, $\chi^{2}([-\tau,0]^{2})$ and $\chi^{0}([-\tau,0]^{2})$.
\end{ese}

We recall now the notion of $\chi$-covariation between a $B_{1}$-valued stochastic process $\X$ and a $B_{2}$-valued stochastic process $\Y$. 
We suppose $\mathbb{X}$ to be a continuous $B_{1}$-valued stochastic process and $\mathbb{Y}$ to be a strongly measurable %(in the Bochner sense) 
$B_{2}$-valued stochastic process such that $\int_{0}^{T}\|\mathbb{Y}_{s}\|_{B^{\ast}} ds <+\infty$ a.s.
We remind that $\mathscr{C}([0,T])$ denotes the space of continuous processes equipped with the ucp topology.\\
Let $\chi$ be a Chi-subspace of $(B_{1}\hat{\otimes}_{\pi}B_{2})^{\ast}$ and $\epsilon > 0$. 
We denote by $[\X,\Y]^{\epsilon}$,
the following application
\begin{equation}		\label{eq Xepsilon}
[\X,\Y]^{\epsilon}:\chi\longrightarrow \mathscr{C}([0,T])
\quad \textrm{defined by}\quad
\phi
\mapsto
\left( \int_{0}^{t} \prescript{}{\chi}{\langle} \phi,
\frac{J\left(  \left(\X_{s+\epsilon}-\X_{s}\right)\otimes \left(\Y_{s+\epsilon}-\Y_{s}\right)  \right)}{\epsilon} 
\rangle_{\chi^{\ast}} \,ds 
\right)_{t\in [0,T]}
\end{equation}
where $ J: B_{1}\hat{\otimes}_{\pi}B_{2} \longrightarrow (B_{1}\hat{\otimes}_{\pi}B_{2})^{\ast\ast}$ is the canonical injection between a space and its bidual. 
%as introduced in subsection \ref{sec: pre}.\\ 
With application $[\X,\Y]^{\epsilon}$ it is possible to associate
 another one, 
denoted by $\widetilde{[\X,\Y]}^{\epsilon}$, defined by
\[
\widetilde{[\X,\Y]}^{\epsilon}(\omega,\cdot):[0,T]\longrightarrow \chi^{\ast}
\quad \textrm{given by}\quad
t\mapsto \left(\phi\mapsto
\int_{0}^{t}  \prescript{}{\chi}{\langle} \phi,\frac{J\left(  
  \left(\X_{s+\epsilon}(\omega)-\X_{s}(\omega)\right)\otimes 
\left(\Y_{s+\epsilon}(\omega)-\Y_{s}(\omega)\right) \right)}{\epsilon}
 \rangle_{\chi^{\ast}} \,ds\right)  \, .
\]
\begin{dfn}		\label{def CHICOV} 
Let $B_{1}$, $B_{2}$ be two Banach spaces and $\chi$ be a Chi-subspace of 
$(B_{1}\hat{\otimes}_{\pi} B_{2})^{\ast}$. % such that $\|\cdot\|_{\chi}\geq \|\cdot\|_{(B_{1}\hat{\otimes}_{\pi} B_{2})^{\ast}}$ . 
Let $\X$ (resp. $\Y$) be a continuous $B_{1}$ (resp. strongly measurable $B_{2}$) valued stochastic process such that $\int_{0}^{T}\|\mathbb{Y}_{s}\|_{B^{\ast}} ds <+\infty$ a.s.. 
We say that {\bf $\X$ and $\Y$ admit a $\chi$-covariation} if 
\begin{description}
\item[H1] For all $(\epsilon_{n})$ there exists a subsequence $(\epsilon_{n_{k}})$ such that 
\begin{equation} 
\begin{split}
&\sup_{k}\int_{0}^{T} \sup_{\|\phi\|_{\chi}\leq 1}\left| \langle \phi,\frac{(\X_{s+\epsilon_{n_{k}}}-\X_{s})\otimes(\Y_{s+\epsilon_{n_{k}}}-\Y_{s})}{\epsilon_{n_{k}}}\rangle \right|ds\\
&=\sup_{k}\int_{0}^{T} \frac{\left\| (\X_{s+\epsilon_{n_{k}}}-\X_{s})\otimes(\Y_{s+\epsilon_{n_{k}}}-\Y_{s})\right\|_{\chi^{\ast}} }{\epsilon_{n_{k}}} ds
\;< \infty\; a.s.
\end{split}
\end{equation}
\item[H2]
\begin{description}
\item{(i)} There exists an application $\chi\longrightarrow
  \mathscr{C}([0,T])$, denoted by $[\X,\Y]$, such that
\begin{equation}		\label{H2 cONDucp}
[\X,\Y]^{\epsilon}(\phi)\xrightarrow[\epsilon\longrightarrow 0_{+}]{ucp} [\X,\Y](\phi) 
\end{equation} 
for every $\phi \in \chi\subset
(B_{1}\hat{\otimes}_{\pi}B_{2})^{\ast}$.
\item{(ii)} 
There is a measurable process $\widetilde{[\X,\Y]}:\Omega\times [0,T]\longrightarrow \chi^{\ast}$, 
such that
\begin{itemize}
\item for almost all $\omega \in \Omega$, $\widetilde{[\X,\Y]}(\omega,\cdot)$ is a (cadlag) bounded variation process, 
\item 
$\widetilde{[\X,\Y]}(\cdot,t)(\phi)=[\X,\Y](\phi)(\cdot,t)$ a.s. for all $\phi\in \chi$.
\end{itemize}
\end{description}
\end{description}
If $\X$ and $\Y$ admit a $\chi$-covariation we will call
 $\chi$-\textbf{covariation} of $\X$ and $\Y$ the $\chi^{\ast}$-valued
process  $(\widetilde{[\X,\Y]})_{0\leq t\leq T}$ defined for every
$\omega\in \Omega $ and $t\in [0,T]$ by $\phi \mapsto
\widetilde{[\X,\Y]}(\omega,t)(\phi)=[\X,\Y](\phi)(\omega,t) $. 
By abuse of notation, $[\X,\Y]$ will also be often called $\chi$-covariation
and it will be confused with $\widetilde{[\X,\Y]}$.\\
\end{dfn}
\begin{dfn}			\label{DChiQV}   
Let $\X=\Y$ be a $B$-valued stochastic process and $\chi$ be a Chi-subspace of $(B\hat{\otimes}_{\pi}B)^{\ast}$. 
The $\chi$-covariation $[\X,\X]$ (or $\widetilde{[\X,\X]}$) will also be denoted 
by $[\X]$ (or $\widetilde{[\X]}$), it will be called \textbf{$\chi$-quadratic variation of $\X$} and we will say that $\X$ has a $\chi$-quadratic variation.
\end{dfn}
\begin{dfn}		\label{DQVWS}
If the $\chi$-covariation exists for $\chi=(B_{1}\hat{\otimes}_{\pi} B_{2})^{\ast}$, we say that $\X$ and $\Y$ admit a \textbf{global covariation}. 
Analogously if $\X$ is $B$-valued and the $\chi$-quadratic variation exists for $\chi=(B\hat{\otimes}_{\pi} B)^{\ast}$, we say that $\X$ admits a \textbf{global quadratic variation}.
\end{dfn}
We recall Corollary 3.2 from \cite{DGR1}, which generalizes Proposition \ref{prFD} in the Banach spaces framework. 
\begin{prop}       \label{pr CONVCCOV}
Let $B_{1}$, $B_{2}$ be two Banach spaces and $\chi$ be a Chi-subspace of 
$(B_{1}\hat{\otimes}_{\pi} B_{2})^{\ast}$. 
Let $\X$ and $\Y$ be two stochastic processes with values in 
$B_{1}$ and $B_{2}$ admitting a $\chi$-covariation; 
let $\mathbb{H}$ be a continuous measurable process $\mathbb{H}:\Omega\times [0,T] \longrightarrow \mathcal{V}$ where $\mathcal{V}$ is a 
closed separable subspace of $\chi$. 
Then for every $t\in [0,T]$
\begin{equation}   		\label{eq SDFR}  
\int_{0}^{t} \prescript{}{\chi}{\langle} \mathbb{H}(\cdot,s),d\widetilde{[\X,\Y]}^{\epsilon}(\cdot,s)\rangle_{\chi^{\ast}}
\xrightarrow[\epsilon \longrightarrow 0]{} 
\int_{0}^{t} \prescript{}{\chi}{\langle} \mathbb{H}(\cdot,s),d\widetilde{[\X,\Y]}(\cdot,s)\rangle_{\chi^{\ast}}
\end{equation}
in probability.
\end{prop}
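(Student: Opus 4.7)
The plan is to reduce the statement, via a two-step approximation, to the case of elementary integrands for which convergence follows immediately from H2(i) of the definition of $\chi$-covariation.

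First, I observe that the total variation in $\chi^{\ast}$ of the process $\widetilde{[\X,\Y]}^{\epsilon}$ on $[0,T]$ is exactly
\[
\int_0^T \frac{\| (\X_{s+\epsilon}-\X_s)\otimes (\Y_{s+\epsilon}-\Y_s)\|_{\chi^{\ast}}}{\epsilon}\,ds,
\]
so by hypothesis H1 of Definition \ref{def CHICOV}, for every sequence $(\epsilon_n)\downarrow 0$ there is a subsequence along which these total variations are almost surely uniformly bounded by a finite random constant $C(\omega)$. Since convergence in probability is metrizable and can be tested along subsequences, it suffices to prove the result along such a subsequence, which I fix and relabel as $\epsilon\to 0$.

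Next I would approximate $\mathbb{H}$ by a step process valued in a countable dense subset $\{\phi_j\}_{j\in\N}\subset \mathcal{V}$. For each $n$, on a deterministic partition $0=t_0^n<\ldots<t_{k_n}^n=T$ of mesh tending to zero, define a measurable selection $J_i^n(\omega)\in\N$ such that $\|\phi_{J_i^n(\omega)}-\mathbb{H}(\omega,t_i^n)\|_\chi\leq 1/n$, and set $\mathbb{H}^n(s)=\sum_i \phi_{J_i^n}\mathbf{1}_{[t_i^n,t_{i+1}^n)}(s)$. By uniform continuity of $s\mapsto \mathbb{H}(\omega,s)$ on $[0,T]$ (valid almost surely since $\mathbb{H}$ is continuous and $\mathcal{V}$-valued), $\sup_{s\leq T}\|\mathbb{H}^n(s)-\mathbb{H}(s)\|_\chi\to 0$ a.s.

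Combining these, the approximation error satisfies almost surely
\[
\left| \int_0^t \langle \mathbb{H}(s)-\mathbb{H}^n(s),\, d\widetilde{[\X,\Y]}^{\epsilon}(s)\rangle \right| \leq \sup_{s\leq T}\|\mathbb{H}(s)-\mathbb{H}^n(s)\|_\chi \cdot C(\omega),
\]
uniformly in $\epsilon$ along the subsequence, and the analogous bound holds with $\widetilde{[\X,\Y]}$ in place of $\widetilde{[\X,\Y]}^{\epsilon}$ by H2(ii). Thus both error terms vanish a.s. as $n\to\infty$. It remains to prove, for each fixed $n$,
\[
\sum_i \langle \phi_{J_i^n},\, \widetilde{[\X,\Y]}^{\epsilon}(t\wedge t_{i+1}^n)-\widetilde{[\X,\Y]}^{\epsilon}(t\wedge t_i^n)\rangle \xrightarrow[\epsilon\to 0]{\P} \sum_i \langle \phi_{J_i^n},\, \widetilde{[\X,\Y]}(t\wedge t_{i+1}^n)-\widetilde{[\X,\Y]}(t\wedge t_i^n)\rangle.
\]
Conditioning on the countably many atoms $\{J_i^n = j\}$, this is a finite sum of statements of the form $[\X,\Y]^{\epsilon}(\phi_j)(u)\to [\X,\Y](\phi_j)(u)$ in probability at deterministic times $u$, which is exactly H2(i).

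The principal technical obstacle is the interplay between the $\epsilon$-limit for fixed step function, the approximation of $\mathbb{H}$ by step functions, and the randomness of $\mathbb{H}$: one must ensure the approximation error of $\mathbb{H}$ can be made small \emph{independently of} $\epsilon$. The uniform total variation bound extracted from H1 along a subsequence is exactly the device that reconciles these three limits, and is the reason hypothesis H1 is built into the definition of $\chi$-covariation.
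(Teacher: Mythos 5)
Your proof is correct. Note, however, that the paper contains no proof of this proposition: it is recalled verbatim as Corollary 3.2 of \cite{DGR1}, so there is no in-paper argument to compare against. Your three-limit scheme --- extracting a subsequence via \textbf{H1} to obtain an a.s.\ uniform bound on the total variation of $\widetilde{[\X,\Y]}^{\epsilon}$, uniformly approximating $\mathbb{H}$ by countably-valued step processes using the separability of $\mathcal{V}$ together with pathwise uniform continuity, and then reducing to \textbf{H2}(i) on the countably many atoms of the measurable selection --- is exactly the natural argument for this statement and coincides with the strategy of the cited proof; the only points worth making explicit in a full write-up are the measurability of the selection $J^n_i$ and the elementary fact that convergence in probability on each atom of a countable partition implies convergence in probability globally.
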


We recall some evaluations of $\chi$-covariations and $\chi$-quadratic variations for window processes given in Section 4 of \cite{DGR1}, in particular we refer to 
Proposition 4.9 and Corollary 4.10.
\begin{prop}   \label{pr QV123}
Let $0 <\tau \leq T$ and we make the same conventions about
vector $a = (a_N=-\tau, \ldots, a_0=0)$ as those introduced after \eqref{eqai}.
Let $X$ and $Y$ be two real continuous processes with finite quadratic
variation. 
\begin{enumerate}
\item [1)] $X(\cdot)$ and $Y(\cdot)$ admit a zero $\chi$-covariation, where $\chi=L^{2}([-\tau,0]^{2})$.
\item [2)] $X(\cdot)$ and $Y(\cdot)$ admit zero $\chi$-covariation for every given $i\in \{0,\ldots, N\}$, where $\chi=L^{2}([-\tau,0])\hat{\otimes}_{h}  \shd_{i}([-\tau,0])$ and 
$\shd_{i}([-\tau,0])\hat{\otimes}_{h}   L^{2}([-\tau,0])$.
\end{enumerate}
If moreover the covariation $[X_{\cdot+a_{i}},Y_{\cdot+a_{j}}]$ 
exists for a given $i, j\in \{ 0,\ldots, N\}$, 
%we have the validity of
 the following statements hold.

\begin{enumerate}
\item [3)]
 $X(\cdot)$ and $Y(\cdot)$ admit a $\chi$-covariation, where $\chi=\shd_{i,j}([-\tau,0]^{2})$ and it equals 
\begin{equation}		\label{eq QV Dij}
[X(\cdot), Y(\cdot)] (\mu)=\mu(\{a_{i},a_{j}\})[X_{\cdot+a_{i}},Y_{\cdot+a_{j}}], \quad \forall \mu \in \shd_{i,j}([-\tau,0]^{2}).
\end{equation}
\item [4)] In the case $i=j=0$, i.e. $X$ and $Y$ admit a covariation $[X,Y]$, then 
$X(\cdot) $ and $Y(\cdot)$ admit $\chi^{0}([-\tau,0]^{2})$-covariation which equals 
\begin{equation}		\label{eq QVCHI0}
[X(\cdot), Y(\cdot)](\mu)=\mu(\{0,0\}) [X,Y], \quad \forall \mu \in \chi^0.
\end{equation}
\end{enumerate}
If $[X_{\cdot+a_{i}},Y_{\cdot+a_{j}}]$ exists
for all $i,j=0,\ldots,N $, then
\begin{enumerate}
\item [5)] 
 $X(\cdot)$ and $Y(\cdot)$ admit a $\chi^{2}([-\tau,0]^{2})$-covariation which equals 
\begin{equation}				\label{eq CHI0-QUADR}
[X(\cdot),Y(\cdot)](\mu)=\sum_{i,j=0}^{N}\mu(\{a_{i},a_{j}\})[X_{\cdot+a_{i}},Y_{\cdot+a_{j}}],\quad  \forall \mu \in \chi^{2}([-\tau,0]^{2}) \, . 
\end{equation}
\end{enumerate}
\end{prop}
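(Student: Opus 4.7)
The plan is to verify the defining conditions H1 and H2 of Definition \ref{def CHICOV} for each of the Chi-subspaces $\chi$ listed, by an explicit computation of the pairing. Since the window lift satisfies $(X(\cdot)_{s+\epsilon}-X(\cdot)_{s})(y)=X_{s+\epsilon+y}-X_{s+y}$, a test element $\phi \in \chi \subset (B\hat{\otimes}_{\pi}B)^{\ast}$ paired with the rescaled tensor increment produces, after Fubini, the spatial integral or sum (depending on $\chi$) of the time integrals
\begin{equation*}
A_\epsilon(y_1,y_2,t) := \int_0^t \frac{(X_{s+\epsilon+y_1}-X_{s+y_1})(Y_{s+\epsilon+y_2}-Y_{s+y_2})}{\epsilon}\,ds.
\end{equation*}
All five items then reduce to controlling $A_\epsilon(y_1,y_2,t)$ on different subsets of $[-\tau,0]^2$.

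First, I would establish a uniform bound on $A_\epsilon$: by Cauchy-Schwarz in the time integration, together with Lemma \ref{lem CPUCP} applied to $[X]^\epsilon$ and $[Y]^\epsilon$ (after a change of variables for the shifted processes), one obtains $\sup_{(y_1,y_2)\in[-\tau,0]^2}|A_\epsilon(y_1,y_2,t)|\le C$ for an a.s.\ finite random $C$, uniformly in small $\epsilon$. Next I would identify the pointwise asymptotics of $A_\epsilon$: along the diagonal, $A_\epsilon(a_k,a_k,t)\to[X_{\cdot+a_k},Y_{\cdot+a_k}]_t$ whenever this mutual covariation exists, by Proposition \ref{prFD}. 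Off the diagonal, whenever $y_1\neq y_2$ and $\epsilon<|y_2-y_1|$, the increments of $X$ and $Y$ live on disjoint time intervals, and I would show $A_\epsilon(y_1,y_2,t)\to 0$ by a Cauchy-Schwarz-type $L^2$ estimate exploiting the finite quadratic variations of $X$ and $Y$. This off-diagonal vanishing is the main technical point.

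With these two ingredients, each item then follows by dominated convergence in the spatial variable(s). For $\chi=L^2([-\tau,0]^2)$ the pairing is $\int_{[-\tau,0]^2}\phi(y_1,y_2)A_\epsilon(y_1,y_2,t)\,dy_1dy_2$ and the diagonal is Lebesgue-null, giving (1). For $\chi=L^2\hat{\otimes}_{h}\shd_{i}$ or $\shd_{i}\hat{\otimes}_{h}L^2$, the pairing reduces to $\int_{-\tau}^0\phi(y_1)A_\epsilon(y_1,a_i,t)\,dy_1$ and $\{a_i\}$ is Lebesgue-null, yielding (2). For $\chi=\shd_{i,j}$, the measure is concentrated at $(a_i,a_j)$ and the pairing is simply $\mu(\{a_i,a_j\})A_\epsilon(a_i,a_j,t)$, which converges to \eqref{eq QV Dij} by the pointwise asymptotics and the assumed existence of $[X_{\cdot+a_i},Y_{\cdot+a_j}]$, giving (3). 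Items (4) and (5) will then follow by linearity from the direct-sum decompositions \eqref{eq-def chi0} and \eqref{eq-def chi2}.

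Finally, condition H1 is obtained on each summand from the same Cauchy-Schwarz bound, and H2(ii) is immediate because on the $L^2$-type summands the limit is zero, while on the $\shd_{i,j}$-type summands the limit lives in a fixed finite-dimensional subspace of $\chi^\ast$, expressed as a continuous real process of locally bounded variation times a fixed element of that subspace. The hard part of the proof will be the off-diagonal vanishing of $A_\epsilon$, whose careful justification using the finite quadratic variation of $X$ and $Y$ combined with an estimate on products of increments over disjoint intervals is the main obstacle.
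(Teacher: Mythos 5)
The paper does not actually prove this proposition --- it is recalled verbatim from Section 4 of \cite{DGR1} (Proposition 4.9 and Corollary 4.10) --- so the comparison can only be with the argument given there; measured against it, your proposal has a genuine gap, and it sits exactly at the point you single out as the main technical obstacle. The off-diagonal vanishing of $A_\epsilon$ is false as a pointwise statement. Note that $A_\epsilon(y_1,y_2,t)=[X_{\cdot+y_1},Y_{\cdot+y_2}]^\epsilon_t$, and take $Y_t=X_{t+c}$ for a fixed $c\in(0,\tau)$: then $Y$ is continuous with finite quadratic variation, but for $y_1=y_2+c$ the two increments coincide and $A_\epsilon(y_1,y_2,t)\to[X_{\cdot+y_1}]_t\neq 0$, even though the increments of $X$ and of $Y$ are taken over disjoint time intervals --- disjointness of the supports carries no decorrelation when $Y$ is built from $X$. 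Worse, for general finite quadratic variation $X$ and $Y$ the family $A_\epsilon(y_1,y_2,t)$ need not converge \emph{at all} at a fixed off-diagonal point, since mutual covariations of finite quadratic variation processes need not exist (cf.\ the remark following Corollary \ref{corCDWM}); so there is no pointwise limit to feed into dominated convergence. Finally, the tool you propose cannot deliver the claim even where it is true: Cauchy--Schwarz gives $|A_\epsilon(y_1,y_2,t)|\le\big([X_{\cdot+y_1}]^\epsilon_t\big)^{1/2}\big([Y_{\cdot+y_2}]^\epsilon_t\big)^{1/2}$, whose limit is not zero. This bound yields the domination and condition \textbf{H1} (that part of your proof is correct), but not the vanishing, so items 1) and 2) are unproved, and with them 4) and 5).

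The argument that actually works replaces pointwise convergence in $(y_1,y_2)$ by a density argument in $\chi$. Condition \textbf{H1}, established exactly as you do, bounds (a.s., along subsequences) the operator norms of $\phi\mapsto[X(\cdot),Y(\cdot)]^{\epsilon}(\phi)(t)$ uniformly in $\epsilon$, so it suffices to verify \textbf{H2}(i) on a dense subset of $\chi$. For $\phi=f\otimes g$ with $f,g\in C^{1}([-\tau,0])$ one integrates by parts in the space variable: $\int_{-\tau}^{0}f(y)\,(X_{s+\epsilon+y}-X_{s+y})\,dy=O(\epsilon)$ uniformly in $s$, with constant controlled by $\|f\|_{C^{1}}$, $\|X\|_{\infty}$ and $\tau$. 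Hence the $L^{2}([-\tau,0]^{2})$-pairing is $O(\epsilon)$ (giving item 1)), and the mixed $L^{2}([-\tau,0])\hat{\otimes}_{h}\shd_{i}$-pairing is $O(\sqrt{\epsilon})$ after one further Cauchy--Schwarz in $s$ against $\big(\epsilon\,[Y_{\cdot+a_i}]^{\epsilon}_t\big)^{1/2}$ (giving item 2)). Items 3)--5) then go through essentially as you describe: on the Dirac components the pairing reduces to $\mu(\{a_i,a_j\})[X_{\cdot+a_i},Y_{\cdot+a_j}]^{\epsilon}_t$, and the direct-sum structure of $\chi^{0}$ and $\chi^{2}$ assembles the pieces.
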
 
As application of Proposition \ref{pr QV123}  we obtain the following.
\begin{cor} \label{corCDWMbis} 
Let $X$ be a real $(\shf_{t})$-weak Dirichlet process with finite 
quadratic variation and decomposition $X=M+A$, $M$ being its 
$(\shf_{t})$-local martingale component. 
Let $N$ be a real $(\shf_{t})$-martingale. 
We set $\chi = \shd_{0,0} \oplus \chi_2$ with
\begin{equation} \label{DecChi2}
\chi_2 = \oplus_{i=1}^{N}\shd_{i, 0}\oplus \apt
 L^{2}([-\tau,0])\hat{\otimes}_h \shd_{0}\cpt.
\end{equation}
We have the following.
\begin{enumerate}
\item  $X(\cdot)$ and $N(\cdot)$ admit a $\shd_{0,0}$-covariation
  given, for $\mu\in \shd_{0,0}$, by 
%\eqref{fr}
\be \label{fr}
[X(\cdot),N(\cdot)](\mu)=\mu(\{0,0\})[M,N] \ .
\ee
\item
  $X(\cdot)$ and $N(\cdot)$ admit a zero  
$\chi_2$-covariation.
\item 
  $X(\cdot)$ and $N(\cdot)$ admit a $\chi$-covariation where 
for any $\mu \in \chi$, \eqref{fr} holds.  
\item $X(\cdot)$ and $N(\cdot)$ admit a $\chi^{0}$-covariation given,
 for $\mu\in \chi^{0}$, by \eqref{fr}.
\end{enumerate}
\end{cor}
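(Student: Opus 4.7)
The plan is to reduce each of the four statements to an application of Proposition \ref{pr QV123}, after first identifying the one-dimensional covariations $[X_{\cdot+a_i},N]$ for $i=0,\ldots,N$. For $i=0$, bilinearity of the scalar covariation together with $[A,N]=0$ (recall that $A$ is $(\shf_t)$-martingale orthogonal by Definition \ref{dfn FtWD}) yields $[X,N]=[M,N]$. For $i\geq 1$ we have $a_i<0$; setting $\rho:=-a_i>0$, both $M_{\cdot+a_i+\rho}=M$ and $A_{\cdot+a_i+\rho}=A$ are $(\shf_t)$-adapted, hence $X_{\cdot+a_i}=M_{\cdot+a_i}+A_{\cdot+a_i}$ is $(\shf_t)$-strongly predictable, and Proposition \ref{prop STRP} gives $[X_{\cdot+a_i},N]=0$.

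Item 1 is then exactly part 3 of Proposition \ref{pr QV123} with $i=j=0$, producing $\mu(\{0,0\})[X,N]=\mu(\{0,0\})[M,N]$. For item 2 we decompose $\mu\in\chi_2$ along the direct sum \eqref{DecChi2}: its restriction to each $\shd_{i,0}$, $i\geq 1$, is annihilated by part 3 of Proposition \ref{pr QV123} combined with $[X_{\cdot+a_i},N]=0$, and its restriction to $L^{2}([-\tau,0])\hat{\otimes}_h\shd_{0}$ is annihilated by part 2 of Proposition \ref{pr QV123}, which applies because $X$ and $N$ both have finite quadratic variation. Items 3 and 4 follow from linearity of the $\chi$-covariation in $\mu$. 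Indeed every $\mu\in\chi$ splits uniquely as $\mu=\mu_{0}+\mu_{2}$ with $\mu_{0}\in\shd_{0,0}$ and $\mu_{2}\in\chi_2$, and since no summand of $\chi_2$ charges the point $(0,0)$ we have $\mu(\{0,0\})=\mu_{0}(\{0,0\})$, so items 1 and 2 reassemble into \eqref{fr} on $\chi$. The same decomposition strategy along the four summands of \eqref{eq-def chi0}, combined with parts 1 and 2 of Proposition \ref{pr QV123} (which kill the three non-$\shd_{0,0}$ pieces), gives item 4.

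The only step requiring a moment of attention is the strong predictability of the drift component $A_{\cdot+a_i}$: we have no pathwise regularity on $A$ beyond continuity, but strong predictability is purely an adaptedness statement about the time-shifted process, so it holds automatically for any continuous process once shifted backwards by $-a_i>0$. Beyond that, the proof is an essentially mechanical bookkeeping exercise matching summands of the various Chi-subspaces to the correct items of Proposition \ref{pr QV123}.
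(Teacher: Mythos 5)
Your proposal is correct and follows essentially the same route as the paper: identify the scalar covariations $[X_{\cdot+a_i},N]$ ($=[M,N]$ for $i=0$ via martingale orthogonality of $A$, $=0$ for $i\geq 1$ via strong predictability and Proposition \ref{prop STRP}), then feed these into the relevant items of Proposition \ref{pr QV123} summand by summand. One point deserves more care than your phrase ``linearity of the $\chi$-covariation in $\mu$'' suggests: a $\chi$-covariation in the sense of Definition \ref{def CHICOV} is not merely a family of limits indexed by $\mu$ — it also requires condition \textbf{H1} (an a.s.\ uniform bound of the $\chi^{\ast}$-norms of the increments over the unit ball of $\chi$) and \textbf{H2}(ii) (a $\chi^{\ast}$-valued bounded variation version). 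Knowing the covariation on each summand of a direct sum does not formally hand you these two conditions for the sum; the paper closes this step by invoking Proposition 3.18 of \cite{DGR1}, which states precisely that the $\chi$-covariation over a direct sum of Chi-subspaces can be assembled from the covariations on the summands. With that citation in place your items 3 and 4 are fine (the paper actually gets item 4 in one stroke from item 4) of Proposition \ref{pr QV123} rather than re-decomposing $\chi^{0}$, but your decomposition is equivalent). A minor stylistic difference in your favour: for $[X,N]=[M,N]$ you argue directly from Definition \ref{dfn FtWD} ($A$ is martingale orthogonal), whereas the paper somewhat loosely cites Proposition \ref{prop STRP} there; your justification is the cleaner one.
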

\begin{cor} 		\label{corCDWM}
Let $X$ be a real $(\shf_{t})$-Dirichlet process 
with  decomposition $X=M+A$, $M$ being its 
$(\shf_{t})$-local martingale component.
 Let $N$ be a real $(\shf_{t})$-martingale. Then we have the following.
%\begin{enumerate}
%\item [1.] $X(\cdot)$ and $N(\cdot)$ admit a $\chi$-covariation with 
%$\chi=\apt\mathcal{D}_{a}([-\tau,0]) \oplus L^{2}([-\tau,0]) \cpt  \hat{\otimes}_{h}\shd_{0}([-\tau,0])$ given, for $\mu \in \chi $, by
%\be \label{fr}
%[X(\cdot),N(\cdot)](\mu)=\mu(\{0,0\})[M,N] \ .
%\ee
%In particular $X(\cdot)$ and $N(\cdot)$ admit a $\shd_{0,0}$-covariation given, for $\mu\in \shd_{0,0}$, by \eqref{fr}.\\
%In particular $X(\cdot)$ and $N(\cdot)$ admit a zero $\chi_2$-covariation where $\chi_2:=\oplus_{i=1}^{N}\shd_{i, 0}\oplus \apt L^{2}([-\tau,0])\hat{\otimes}_h \shd_{0}\cpt$.
%\item [2.] $X(\cdot)$ and $N(\cdot)$ admit a $\chi^{0}$-covariation given, for $\mu\in \chi^{0}$, by \eqref{fr}.
%\end{enumerate}
%Suppose now that $X$ is an real $(\shf_{t})$-Dirichlet process.Then we have the following.
%with decomposition $M+A$, $M$ being the $(\shf_{t})$-local martingale part and let $N$ be a real $(\shf_{t})$-local martingale. 
\begin{enumerate}
\item $X(\cdot)$ admits a 
$\chi^{2}$-quadratic variation given by 
\[
[X(\cdot)](\mu)=\sum_{i=0}^{N}\mu(\{a_{i},a_{i}\}) [M]_{\cdot+a_{i}}  \ .
%[M_{\cdot+a_{i}}] \ .
\]
\item $X(\cdot)$ and $N(\cdot)$ admit a 
$\chi^{2}$-covariation given by 
\[ 
[X(\cdot),N(\cdot)](\mu)
=\sum_{i=0}^{N}\mu(\{a_{i},a_{i}\}) [M,N]_{\cdot+a_{i}} \ .
%[M_{\cdot+a_{i}},N_{\cdot+a_{i}}] \ .
\]
\end{enumerate}
\end{cor}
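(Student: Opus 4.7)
The plan is to apply Proposition \ref{pr QV123} item 5), once with $Y = X$ and once with $Y = N$. Doing so reduces the Corollary to verifying the existence of the real covariations $[X_{\cdot+a_i}, X_{\cdot+a_j}]$ and $[X_{\cdot+a_i}, N_{\cdot+a_j}]$ for every pair $i, j \in \{0, 1, \ldots, N\}$, and identifying them as $[M]_{\cdot+a_i}$ (resp. $[M,N]_{\cdot+a_i}$) when $i = j$ and as zero otherwise. Plugging those values into formula \eqref{eq CHI0-QUADR} then collapses the double sum to its diagonal and yields immediately both claimed expressions.

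Since $X = M + A$ is $(\shf_t)$-Dirichlet and $N$ is a continuous $(\shf_t)$-local martingale, both have finite quadratic variation. A change of variable $u = s + a_k$ in the regularization integrals of Definition \ref{def cov} shows that $A_{\cdot+a_k}$ is still a zero quadratic variation process and that $M_{\cdot+a_k}$, $N_{\cdot+a_k}$ are still continuous local martingales (with respect to the time-shifted filtration $(\shf_{t+a_k})$). Expanding $X_{\cdot+a_i} = M_{\cdot+a_i} + A_{\cdot+a_i}$ bilinearly inside the bracket and applying item 3) of Remark \ref{rem R} to annihilate every cross term involving an $A_{\cdot+a_k}$ factor, we are reduced to computing $[M_{\cdot+a_i}, M_{\cdot+a_j}]$ and $[M_{\cdot+a_i}, N_{\cdot+a_j}]$.

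For the diagonal case $i = j$, the change of variable $u = s + a_i$ in the $\epsilon$-approximation directly yields $[M_{\cdot+a_i}, M_{\cdot+a_i}]_t = [M]_{t+a_i}$ and $[M_{\cdot+a_i}, N_{\cdot+a_i}]_t = [M, N]_{t+a_i}$, with the convention that the real brackets are extended by their initial value for negative arguments. The main obstacle is the off-diagonal case $i \neq j$. Assume WLOG $a_i < a_j$ and set $c = a_j - a_i > 0$. For $\epsilon < c$, the increments $M_{u+\epsilon} - M_u$ and $M_{u+c+\epsilon} - M_{u+c}$ appearing in the $\epsilon$-approximation are taken over disjoint intervals separated by distance at least $c$; consequently, the martingale property of $M$ (together with a localization argument to handle the local martingale case) gives $\E[(M_{u+\epsilon} - M_u)(M_{u+c+\epsilon} - M_{u+c}) \mid \shf_{u+c}] = 0$, so the $\epsilon$-approximation has vanishing expectation. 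A second moment estimate via the Burkholder-Davis-Gundy inequality, combined with polarization and the diagonal identity $[M_{\cdot+a_i} + M_{\cdot+a_j}]_t = [M]_{t+a_i} + [M]_{t+a_j}$, then delivers convergence to zero in probability. The identical argument applies verbatim to $[M_{\cdot+a_i}, N_{\cdot+a_j}]$.

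Combining the diagonal and off-diagonal identifications, $[X_{\cdot+a_i}, X_{\cdot+a_j}] = \delta_{ij}\, [M]_{\cdot+a_i}$ and $[X_{\cdot+a_i}, N_{\cdot+a_j}] = \delta_{ij}\, [M, N]_{\cdot+a_i}$, and inserting in \eqref{eq CHI0-QUADR} of Proposition \ref{pr QV123} finishes the proof of both items.
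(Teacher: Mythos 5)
Your overall strategy coincides with the paper's: reduce everything to item 5) of Proposition \ref{pr QV123}, identify the diagonal real covariations by a time shift, kill every cross term containing an $A_{\cdot+a_k}$ factor via Remark \ref{rem R} item 3, and show the off-diagonal martingale brackets vanish. The one place you genuinely diverge is the off-diagonal step. The paper disposes of $[X_{\cdot+a_i},M_{\cdot+a_j}]$ and $[X_{\cdot+a_i},N_{\cdot+a_j}]$ for $i\neq j$ in one line by invoking Proposition \ref{prop STRP}: if $a_i<a_j$, the process $X_{\cdot+a_i}$ is $(\F_{t+a_j})$-strongly predictable (shift by $\rho=a_j-a_i>0$), hence orthogonal to the $(\F_{t+a_j})$-local martingale $M_{\cdot+a_j}$. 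You instead re-derive this orthogonality bare-handed, via the vanishing of $\E[(M_{u+\epsilon}-M_u)(M_{u+c+\epsilon}-M_{u+c})\mid\F_{u+c}]$ plus a second-moment estimate. That route can be made to work (the off-diagonal terms of the squared regularization integral vanish by the same conditioning, leaving an $O(\epsilon)$ near-diagonal contribution, and localization handles the general local martingale), but as written it is only a sketch, and the appeal to ``polarization and the diagonal identity $[M_{\cdot+a_i}+M_{\cdot+a_j}]_t=[M]_{t+a_i}+[M]_{t+a_j}$'' is circular: that identity is equivalent to the vanishing of the very cross bracket you are trying to establish, so it cannot be used as an input. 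The paper's Proposition \ref{prop STRP} is precisely the packaged form of your conditioning argument; citing it both shortens the proof and avoids redoing the moment estimates.
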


\begin{rem}
More details about Dirichlet processes and their properties will be given in section \ref{sec:Dirichlet}. Examples of finite quadratic variation 
weak Dirichlet processes are provided in Section 2 of \cite{er2}. For an $(\mathcal{F}_{t})$-weak Dirichlet process $X$ the 
covariations $[X_{\cdot+a_{i}},X_{\cdot+a_{j}}]$ are not a priori determined. 
\end{rem}
\begin{proof}[Proof of Corollary \ref{corCDWMbis}]
\begin{enumerate}
\item Using Proposition \ref{prop STRP}  $[X,Y] = [M,N]$. So this point follows by item 3) of Proposition \ref{pr QV123}. 
\item We keep in mind the direct sum decomposition of $\chi_2$
given in \eqref{DecChi2}.
We compute the $\oplus_{i=1}^{N}\shd_{0, i}$-covariation. 
Covariations $[X_{\cdot+a_{i}},N]=0$ because it is the sum of
 $[M_{\cdot+a_{i}},N]$ and $[A_{\cdot+a_{i}},N]$ which are zero 
by Proposition \ref{prop STRP} for $i=1,\ldots, N$. 
Using item 3) of Proposition \ref{pr QV123}, $X(\cdot)$ and 
$N(\cdot)$ have zero $\oplus_{i=1}^{N}\shd_{i, 0}$-covariation. 
By item 2) of Proposition \ref{pr QV123}, $X(\cdot)$ and $N(\cdot)$
 have zero $L^{2}([-\tau,0] \hat{\otimes}_{h} \shd_{i}$-covariation
for every $i$. 
Proposition 3.18 in \cite{DGR1} concludes the proof of item 2, since
 it allows to express the $\chi$-covariation in a sum of $\chi$-covariation whenever $\chi$ is a direct sum of Chi-subspaces.
\item It follows by 1., 2. and again by Proposition 3.18 in \cite{DGR1}.
\item We know that  that $[X,N]=[M,N]$. So this point follows by item 4) of Proposition \ref{pr QV123}.
\end{enumerate}
\end{proof}

\begin{proof}[Proof of Corollary \ref{corCDWM}]
%\begin{enumerate}
%\item [1.] We observe that $\chi=\apt\mathcal{D}_{a}([-\tau,0]) 
%\oplus L^{2}([-\tau,0]) \cpt  \hat{\otimes}_{h}\shd_{0}([-\tau,0])$ is a Chi-subspace which 
%decomposes in a direct sum of thwo Chi-subspaces: $\shd_{0, 0}$ and $\chi_2$. 
%$[X,N]=[M,N]$ because $[A,N]=0$ since $A$ is an $(\F_t)$-martingale orthogonal process. 
%By item 4) of Proposition \ref{pr QV123} $X(\cdot)$ and $N(\cdot)$ admits a $\shd_{0, 0}$-covariation which equals $\mu(\{0,0\})[M,N]$. 
%We show now that $X(\cdot)$ and $N(\cdot)$ admit a zero $\chi_2$-covariation.
%We compute the $\oplus_{i=1}^{N}\shd_{0, i}$-covariation. 
%Covariations $[X_{\cdot+a_{i}},N]=0$ because it is the sum of $[M_{\cdot+a_{i}},N]$ and $[A_{\cdot+a_{i}},N]$ which are zero by Proposition \ref{prop STRP} for $i=1,\ldots, N$. 
%Using item 3) of Proposition \ref{pr QV123}, $X(\cdot)$ and $N(\cdot)$ have zero $\sum_{i=1}^{N}\shd_{i, 0}$-covariation. 
%By item 2) of Proposition \ref{pr QV123}, $X(\cdot)$ and $N(\cdot)$ have zero $L^{2}\hat{\otimes}_{h} \shd_{0}$-covariation. 
%Proposition 3.18 in \cite{DGR1} concludes the proof of item 5, since it allows to express the $\chi$-covariation in a sum of $\chi$-covariation whenever $\chi$ is a direct sum of Chi-subspaces.
%\item [2.] follows by item 4) of Proposition \ref{pr QV123} and that fact that $[X,N]=[M,N]$.
\begin{enumerate}
\item  If $i \neq j$, 
by Proposition \ref{prop STRP} and Remark \ref{rem R}, it follows 
that  $[X_{\cdot+a_{i}},X_{\cdot+a_{j}}]=0$.
If $i = j$, by Remark \ref{rem R} and by definition of 
quadratic variation we get
$[X_{\cdot+a_{i}}] = [M]_{\cdot + a_i}$.
The result follows by item 5) of Proposition \ref{pr QV123}.
 %and  the fact that $[X_{\cdot+a_{i}},X_{\cdot+a_{j}}]$ exists for all $i, j=1,\ldots , N$, in particular $[X]_{t}=[M]_{t}$ and $[X_{\cdot+a_{i}},X_{\cdot+a_{j}}]=0$ for $i\neq j$. 
%So 
%\[
%[X(\cdot)](\mu)
%=\sum_{i,j=0}^{N}\mu(\{a_{i},a_{j}\}) [X_{\cdot + a_{i}}, X_{\cdot+a_{j}}]_{\cdot}
%=\sum_{i,j=0}^{N}\mu(\{a_{i},a_{j}\})[M_{\cdot+a_{i}}+A_{\cdot+a_{i}},M_{\cdot+a_{j}}+A_{\cdot+a_{j}}]
%=\sum_{i=0}^{N}\mu(\{a_{i},a_{i}\})[M_{\cdot+a_{i}}] \ .
%\]
\item  Similarly as in the proof of item 1. we have that 
$[X_{\cdot + a_{i}}, N_{\cdot+a_{j}}]=0$ if $i\neq j$ and $[X_{\cdot +
    a_{i}}, N_{\cdot+a_{i}}]
=[M,N]_{\cdot+a_{i}}$. The result follows as a 
consequence of item 5) of Proposition \ref{pr QV123}.
%So  
%\[ 
%[X(\cdot),N(\cdot)](\mu)
%=\sum_{i,j=0}^{N}\mu(\{a_{i},a_{j}\}) [X_{\cdot + a_{i}}, N_{\cdot+a_{j}}]_{\cdot}=\sum_{i,j=0}^{N}\mu(\{a_{i},a_{j}\})[M_{\cdot+a_{i}},N_{\cdot+a_{j}}]
%=\sum_{i=0}^{N}\mu(\{a_{i},a_{i}\})[M_{\cdot+a_{i}},N_{\cdot+a_{i}}] \ .
%\]
\end{enumerate}
\end{proof}
Other interesting results about $\chi$-covariation and $\chi$-quadratic variation for a window of a finite quadratic variation process are given in 
Proposition 6.4 in \cite{DGRnote} and with more details in \cite{DGR1}, Propositions 4.16 and 4.18.
%We go on stating some results about the conservation of properties
%related to $\chi$-covariation and related properties with respect to
%transformations of class $C^1$.

\section{Transformation of $\chi$-quadratic variation and of $\chi$-covariation}		\label{sec: stab}
Let $X$ be a real finite quadratic variation process and $f\in C^{1}(\R)$. We recall that 
$f(X)$ is again a finite quadratic variation process. 
We will illustrate some natural generalizations to the infinite
dimensional 
framework.
%In this section we will discuss about transformation of the $\chi$-covariation through a 
%real function $C^{1}$ in the Fr\' echet sense.
In this section, we analyze how transform Banach valued processes having a $\chi$-covariation through $C^{1}$ Fr\'echet differentiable functions. 
We first recall the finite dimensional case framework, see 
\cite{flru1} Remark 3.
% for instance \cite{er2}, Proposition 2.7.

%
\begin{prop}		\label{pr STMLT} 
 Let $X=(X^{1},\ldots,X^{n})$ be a $\R^{n}$-valued process having all its mutual covariations $[X^{i},X^{j}]_{t}$ and 
 $F$, $G \in C^{1}(\mathbb{R}^{n})$. 
 Then the covariation $ \left[F(X),G(X) \right]$ exists and is given by
 \begin{equation}			\label{eq STMLT}
 \left[F(X),G(X) \right]_{\cdot}=
 \sum_{i,j=1}^{n} \int_{0}^{\cdot} \partial_{i} F(X)\partial_{j}G(X)d[X^{i},X^{j}]
 \end{equation}
 \end{prop}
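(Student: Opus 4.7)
The plan is to work directly from the definition of the regularized covariation and exploit the fundamental theorem of calculus to linearize the increments of $F(X)$ and $G(X)$. Specifically, for $\epsilon>0$ and $s \in [0,T]$, I will write
\begin{equation*}
F(X_{s+\epsilon}) - F(X_s) = \sum_{i=1}^n (X^i_{s+\epsilon}-X^i_s)\,\phi^{F,\epsilon}_{s,i},
\qquad
\phi^{F,\epsilon}_{s,i} := \int_0^1 \partial_i F\bigl(X_s+u(X_{s+\epsilon}-X_s)\bigr)\,du,
\end{equation*}
and similarly $G(X_{s+\epsilon}) - G(X_s) = \sum_{j=1}^n (X^j_{s+\epsilon}-X^j_s)\,\phi^{G,\epsilon}_{s,j}$. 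Substituting into the definition of $[F(X),G(X)]^\epsilon_t$ yields
\begin{equation*}
[F(X),G(X)]^\epsilon_t = \sum_{i,j=1}^n \int_0^t \phi^{F,\epsilon}_{s,i}\,\phi^{G,\epsilon}_{s,j}\,\frac{(X^i_{s+\epsilon}-X^i_s)(X^j_{s+\epsilon}-X^j_s)}{\epsilon}\,ds.
\end{equation*}

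Next, I will handle each summand. Because $X$ is continuous, its paths are a.s.\ bounded on $[0,T]$; the continuity of $\partial_i F$ on compact sets together with the uniform convergence $\sup_{s\le T}\|X_{s+\epsilon}-X_s\|\to 0$ a.s.\ imply
\begin{equation*}
\sup_{s\in [0,T]} \bigl|\phi^{F,\epsilon}_{s,i} - \partial_i F(X_s)\bigr| \xrightarrow[\epsilon\to 0]{} 0 \quad \text{a.s.},
\end{equation*}
and analogously for $\phi^{G,\epsilon}_{s,j}$. Splitting $\phi^{F,\epsilon}_{s,i}\phi^{G,\epsilon}_{s,j} = \partial_i F(X_s)\partial_j G(X_s) + R^{\epsilon}_{s,i,j}$ with $\sup_{s\le T}|R^\epsilon_{s,i,j}|\to 0$ a.s., the \emph{main term} becomes
\begin{equation*}
\int_0^t \partial_i F(X_s)\partial_j G(X_s)\,\frac{(X^i_{s+\epsilon}-X^i_s)(X^j_{s+\epsilon}-X^j_s)}{\epsilon}\,ds,
\end{equation*}
which by Proposition \ref{prFD} (applied with the cadlag integrand $H_s = \partial_i F(X_s)\partial_j G(X_s)$) converges ucp to $\int_0^t \partial_i F(X_s)\partial_j G(X_s)\,d[X^i,X^j]_s$.

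It remains to control the \emph{remainder term}. By Cauchy--Schwarz,
\begin{equation*}
\left| \int_0^t R^{\epsilon}_{s,i,j}\,\frac{(X^i_{s+\epsilon}-X^i_s)(X^j_{s+\epsilon}-X^j_s)}{\epsilon}\,ds \right|
\le \sup_{s\le T}|R^\epsilon_{s,i,j}| \cdot \sqrt{[X^i,X^i]^\epsilon_t}\,\sqrt{[X^j,X^j]^\epsilon_t},
\end{equation*}
and the two square-root factors converge in probability to $\sqrt{[X^i]_t}\,\sqrt{[X^j]_t}$ by hypothesis, while the supremum vanishes a.s.; hence the remainder vanishes in probability for each $t$. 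Summing over $i,j$ gives convergence in probability of $[F(X),G(X)]^\epsilon_t$ to the right-hand side of \eqref{eq STMLT} for every $t$, and since the limit process is continuous (being a sum of Lebesgue--Stieltjes integrals with continuous integrators), condition ii) of Definition \ref{def cov} is fulfilled. The main technical point is the interchange of limits through the $\epsilon$-dependent integrands $\phi^{F,\epsilon}, \phi^{G,\epsilon}$; this is where the uniform continuity of $\partial_i F, \partial_j G$ on the (random) compact range of $X$ and the Cauchy--Schwarz control of the remainder are essential.
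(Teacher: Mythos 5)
Your proof is correct. Note that the paper itself does not prove Proposition \ref{pr STMLT} directly: it recalls it from an external reference and later (Example \ref{EFD}) observes that it can be re-derived as the finite-dimensional specialization of Theorem \ref{thm SCQVA}, taking $B=\R^{n}$, $\chi=(\R^{n}\hat{\otimes}_{\pi}\R^{n})^{\ast}\cong\mathbb{M}_{n\times n}(\R)$ and identifying the global quadratic variation of $\X$ with the matrix $([X^{i},X^{j}])_{i,j}$. Your direct argument is the elementary analogue of the proof of that theorem: the first-order Taylor expansion with integral remainder plays the role of the decomposition into $A_{1},A_{2},A_{3}$ there; Proposition \ref{prFD} (with the continuous integrand $\partial_{i}F(X)\partial_{j}G(X)$) replaces Proposition \ref{pr CONVCCOV} for the leading term; and the uniform continuity of $\partial_{i}F,\partial_{j}G$ on the compact range of the path controls the error, exactly as the continuity modulus $\varpi^{\mathcal{U}\times\mathcal{U}}_{DF\otimes DF}$ does in the Banach setting. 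The one point where the two arguments genuinely diverge is the integrability bound on the remainder: the general theorem must invoke the a.s.\ subsequence bound of condition \textbf{H1} on $\int_{0}^{T}\|(\X_{s+\epsilon}-\X_{s})^{\otimes 2}\|_{\chi^{\ast}}\epsilon^{-1}ds$, whereas you can use Cauchy--Schwarz together with the convergence in probability of $[X^{i},X^{i}]^{\epsilon}_{t}$ and $[X^{j},X^{j}]^{\epsilon}_{t}$, which is available precisely because all mutual covariations are assumed; this makes your route shorter and self-contained, at the price of not generalizing beyond finite dimensions. Your final remark that the limit is continuous, so that condition ii) of Definition \ref{def cov} holds, correctly closes the argument.
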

This includes the case of Proposition 2.1 in \cite{rv2}, setting $n=2$, $F(x,y)=f(x)$, $G(x,y)=g(y)$, $f, g\in C^{1}(\R)$.\\

When the value space  is a  general Banach space,
we need to recall some other preliminary results.
\begin{prop}		\label{prop RRCBL}
Let $E$ be a Banach space, $S,T:E \longrightarrow \R$ be linear continuous forms. There is a unique linear 
continuous forms from $E\hat{\otimes}_{\pi}E$ to $\mathbb{R}\hat{\otimes}_{\pi}\mathbb{R} \cong\R$, denoted by 
$S\otimes T$, such that $S\otimes T(e_{1}\otimes e_{2})=S(e_{1})\cdot T(e_{2})$ and $\|S\otimes T\|=\|S\|\,\|T\|$.
\end{prop}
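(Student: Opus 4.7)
The plan is to construct $S \otimes T$ first on the algebraic tensor product $E \otimes E$ and then extend by continuity and density to the projective completion $E \hat{\otimes}_\pi E$. Uniqueness of the extension follows from the density of $E \otimes E$ in $E \hat{\otimes}_\pi E$ together with continuity. The two non-trivial points are well-definedness on $E \otimes E$ and the precise norm identity $\|S \otimes T\| = \|S\| \, \|T\|$.

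First I would observe that the map $(e_1, e_2) \mapsto S(e_1) T(e_2)$ is a bounded bilinear form on $E \times E$, so by the universal property of the algebraic tensor product it induces a unique linear map $S \otimes T : E \otimes E \to \mathbb{R}$ characterized by $(S \otimes T)(e_1 \otimes e_2) = S(e_1) T(e_2)$. Next I would show that this linear map is continuous for the projective norm $\pi$. Given $u \in E \otimes E$ and any representation $u = \sum_{i=1}^n x_i \otimes y_i$, I compute
\begin{equation*}
|(S \otimes T)(u)| \;\le\; \sum_{i=1}^n |S(x_i)|\,|T(y_i)| \;\le\; \|S\|\,\|T\| \sum_{i=1}^n \|x_i\|\,\|y_i\|.
\end{equation*}
Taking the infimum over all representations of $u$ on the right-hand side gives $|(S \otimes T)(u)| \le \|S\|\,\|T\|\,\pi(u)$, showing that $S \otimes T$ is a continuous linear form on $(E \otimes E, \pi)$ with norm at most $\|S\|\,\|T\|$.

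Since $E \otimes E$ is by construction dense in the completion $E \hat{\otimes}_\pi E$, the bounded linear form $S \otimes T$ extends uniquely to a bounded linear form on $E \hat{\otimes}_\pi E$, still denoted $S \otimes T$, with $\|S \otimes T\| \le \|S\|\,\|T\|$. Uniqueness of any linear continuous form satisfying $(S \otimes T)(e_1 \otimes e_2) = S(e_1) T(e_2)$ follows from the same density argument, since two such forms agree on the dense subspace $E \otimes E$.

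For the reverse norm inequality, which is the main point one must be careful with, I would use the elementary tensors. For every $e_1, e_2 \in E$ one has $\pi(e_1 \otimes e_2) \le \|e_1\|\,\|e_2\|$ (by using the trivial one-term representation), hence
\begin{equation*}
|S(e_1)\,T(e_2)| = |(S \otimes T)(e_1 \otimes e_2)| \le \|S \otimes T\|\,\pi(e_1 \otimes e_2) \le \|S \otimes T\|\,\|e_1\|\,\|e_2\|.
\end{equation*}
Taking the supremum over $\|e_1\|, \|e_2\| \le 1$ yields $\|S\|\,\|T\| \le \|S \otimes T\|$, which combined with the previous bound gives the equality. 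The natural isometric identification $\mathbb{R} \hat{\otimes}_\pi \mathbb{R} \cong \mathbb{R}$ is built into the fact that the target scalar field is one-dimensional, so the statement is formulated equivalently as constructing an element of $(E \hat{\otimes}_\pi E)^\ast$. The only step where one needs to be alert is the well-definedness on $E \otimes E$: the inequality must be established on any representative $\sum_i x_i \otimes y_i$ (not just on a particular one), which is exactly what the bound above achieves, the infimum in the definition of $\pi$ making the estimate representation-independent.
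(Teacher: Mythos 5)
Your argument is correct and complete: the paper itself gives no proof, simply citing Proposition 2.3 of Ryan's book on tensor products, and your construction (factor the bounded bilinear form through $E\otimes E$, establish the bound $|(S\otimes T)(u)|\le \|S\|\,\|T\|\,\pi(u)$ over all representations, extend by density, and recover the reverse inequality on elementary tensors) is exactly the standard argument that reference contains. No gaps; the attention you pay to representation-independence and to the lower bound $\|S\|\,\|T\|\le\|S\otimes T\|$ covers the only two points where care is needed.
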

\begin{proof}
See Proposition 2.3 in \cite{rr}.
\end{proof}
\begin{rem} 		\label{rem 7.2}
\begin{enumerate}
\item If $T=S$, we will denote $S\otimes S=S\otimes^{2}$.
\item Let $B$ be a Banach space and $F\, ,G:E\longrightarrow \mathbb{R}$ of class $C^{1}(E)$ in the 
Fr\'echet sense. If $x$ and $y$ are fixed, $DF(x)$ and $DF(y)$ 
are linear continuous form from 
$E$ to $\R$. 
We remark that the symbol $DF(x)\otimes DF(y)$ is defined according to Proposition \ref{prop RRCBL}, we 
insist on the fact that ``a priori'' $DF(x)\otimes DF(y)$ does not denote an element of some tensor product $E^{\ast}\otimes E^{\ast}$.
\end{enumerate}
\end{rem}
When $E$ is a Hilbert space, the application
 $S\otimes T$ of Proposition \ref{prop RRCBL} can be further specified.
\begin{prop}		\label{prop RRCBLbis}
Let $E$ be a Hilbert space, $S$, $T\in E^{\ast}$ and $\mathcal{S}$, $\mathcal{T}$ the associated elements in $E$ via Riesz identification. 
$S\otimes T$ can be characterized as the continuous bilinear form
\begin{equation}		\label{eq EEEE}
S\otimes T (x\otimes y)= \langle \mathcal{S},\mathcal{T}\rangle _{E}\cdot 
\langle x , y \rangle_{E}= \langle \mathcal{S}\otimes \mathcal{T}, x\otimes y\rangle_{E\hat{\otimes}_{h}E}, \quad  \forall x,y \in E.
\end{equation}
In particular the linear form $S\otimes T$ belongs to $(E\hat{\otimes}_{h}E)^{\ast}$ and via Riesz it is identified with the tensor product $\mathcal{S}\otimes \mathcal{T}$.
That Riesz identification will be omitted in the sequel.
\end{prop}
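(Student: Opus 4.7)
The plan is to verify the stated identity on elementary tensors, extend it to the Banach completion by continuity, and then read off the Riesz identification. The whole matter rests on the compatibility between the projective cross-norm $\pi$ and the Hilbert cross-norm on the algebraic tensor product $E\otimes E$.

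First, since $E$ is Hilbert, $E\hat{\otimes}_{h}E$ is itself a Hilbert space, characterized by the fact that on the algebraic tensor product its inner product reads $\langle x\otimes y, u\otimes v\rangle_{E\hat{\otimes}_{h}E} = \langle x,u\rangle_E\,\langle y,v\rangle_E$ for $x,y,u,v\in E$, and is then extended by bilinearity and continuity. Consequently $\mathcal{S}\otimes \mathcal{T}\in E\hat{\otimes}_{h}E$ induces, via Riesz, a continuous linear functional $\Phi:E\hat{\otimes}_{h}E\to \mathbb{R}$ whose value on a simple tensor $x\otimes y$ equals $\langle \mathcal{S},x\rangle_E\langle \mathcal{T},y\rangle_E = S(x)\,T(y)$.

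Second, the projective cross-norm dominates the Hilbert cross-norm on $E\otimes E$ (this is the standard comparison, since $\pi$ is the largest reasonable cross-norm). Hence the identity map on the algebraic tensor product extends to a continuous linear map $E\hat{\otimes}_{\pi}E\to E\hat{\otimes}_{h}E$. Pulling $\Phi$ back through this map yields a continuous linear form on $E\hat{\otimes}_{\pi}E$ that agrees with the prescription $e_1\otimes e_2\mapsto S(e_1)T(e_2)$ on every elementary tensor. By the uniqueness statement in Proposition~\ref{prop RRCBL}, this pulled-back form coincides with $S\otimes T$ on the whole of $E\hat{\otimes}_{\pi}E$.

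Finally, restricting the resulting identity $(S\otimes T)(x\otimes y)=\Phi(x\otimes y)$ to simple tensors gives the displayed formula, and its extension to all of $E\hat{\otimes}_{h}E$ delivers precisely the Riesz identification of $S\otimes T$ with $\mathcal{S}\otimes \mathcal{T}$; the membership $S\otimes T\in (E\hat{\otimes}_{h}E)^{\ast}$ is then automatic. The only nontrivial input is the cross-norm comparison on $E\otimes E$, which is classical; once this is in place everything else is continuity and density of the algebraic tensor product in both completions.
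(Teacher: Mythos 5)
Your argument is correct and follows essentially the same route as the paper: define the candidate functional through the Hilbert tensor structure of $E\hat{\otimes}_{h}E$, observe that $h$-continuity implies $\pi$-continuity (the cross-norm comparison $\pi\geq h$, equivalently $(E\hat{\otimes}_{h}E)^{\ast}\subset(E\hat{\otimes}_{\pi}E)^{\ast}$), and conclude by the uniqueness in Proposition~\ref{prop RRCBL}. The only cosmetic difference is that the paper also verifies $\|\phi\|_{\mathcal{B}}=\|S\|_{E^{\ast}}\|T\|_{E^{\ast}}$ before invoking uniqueness, whereas you rely on density of the algebraic tensor product, which already determines a continuous form by its values on elementary tensors; both are valid.
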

\begin{proof}
The application $\phi$ defined in the right-side of \eqref{eq EEEE} belongs to $(E\hat{\otimes}_{h}E)^{\ast}$ by construction.
Since $(E\hat{\otimes}_{h}E)^{\ast} \subset  
(E\hat{\otimes}_{\pi}E)^{\ast}$, it also belongs
to $(E\hat{\otimes}_{\pi}E)^{\ast}$.
%is obviously a a continuous bilinear form on $E\times E$, i.e. 
%$S\otimes T$ is an elements of $(E\hat{\otimes}_{\pi}E)^{\ast}\cong \mathcal{B}(E,E)$. By construction it also belongs in $(E\hat{\otimes}_{h}E)^{\ast}$. 
Moreover we have 
\[
\left\|\phi\right\|_{\mathcal{B}}=\sup_{\|f\|_{E}\leq 1,\|g\|_{E}\leq 1} 
\left| \phi(f,g)\right|=
\sup_{\|f\|_{E}\leq 1} \vert \langle \mathcal{S}, f \rangle \vert
    \sup_{\|g\|_{E}\leq 1} \vert \langle \mathcal{T}, g \rangle \vert =
%left| \phi(f,g)\right|=
%left\|\mathcal{S}\right\|_{E} \left\|\mathcal{T}\right\|_{E}=
\left\|S\right\|_{E^{\ast}}\left\|
 T\right\|_{E^{\ast}}\; .
\]
By uniqueness in Proposition \ref{prop RRCBL}, $\phi$ must coincide with $S\otimes T$.
\end{proof}
As  application of Proposition \ref{prop RRCBLbis},
 setting the Hilbert space $E=\mathcal{D}_{a}\oplus L^{2}([-\tau,0])$, 
we state the following useful result that will be often used in Section \ref{sec:Dirichlet} devoted to $C([-\tau,0])$-valued window processes.
\begin{ese}   \label{pr STCHI2} 
Let $F^{1}$ and $F^{2}$ be two functions from $C([-\tau,0])$ to $\mathcal{D}_{a}\oplus L^{2}([-\tau,0])$ such that $\eta\mapsto F^{j}(\eta)=\sum_{i=0,\ldots N}\lambda^{j}_{i}(\eta)\delta_{a_{i}}+g^{j}(\eta)$ with $\eta\in C([-T,0])$, $\lambda^{j}_{i}: C([-\tau,0])\longrightarrow \mathbb{R}$ and $g^{j}: C([-\tau,0])\longrightarrow L^{2}([-T,0])$ continuous for $j=1,2$.
Then for any $\eta_{1},\eta_{2} \in C([-\tau,0])$,
 $(F^{1}\otimes F^{2})(\eta_{1},\eta_{2})$ will be identified with the true tensor product 
$F^{1}(\eta_{1})\otimes F^{2}(\eta_{2})$ which belongs to $\chi^{2}([-\tau,0]^{2})$. In fact 
we have 
\begin{equation}  \label{prod tensoriale applicazioni}
\begin{split}
F^{1}(\eta_{1})\otimes F^{2}(\eta_{2})
&
=\sum_{i,j=0,\ldots, N}\lambda^{1}_{i}(\eta_{1})\lambda^{2}_{j}(\eta_{2})\delta_{a_{i}}\otimes\delta_{a_{j}} + g^{1}(\eta_{1})\otimes  \sum_{i=0,\ldots,N} \lambda^{2}_{i}(\eta_{2})\delta_{a_{i}}+\\
&\\
& +  \sum_{i=0,\ldots,N} \lambda^{1}_{i}(\eta_{1})\delta_{a_{i}}\otimes g^{2}(\eta_{2})+ g^{1}(\eta_{1}) \otimes g^{2}(\eta_{2})
\end{split}
\end{equation}
\end{ese}

We now state a result related to the generalization of Proposition \ref{pr STMLT} to functions of processes admitting a $\chi$-covariation.  
\begin{thm}				\label{thm SCQVA}
Let $B$ be a separable Banach space, $\chi$ a Chi-subspace of $(B\hat{\otimes}_{\pi} B)^{\ast}$ and 
$\X^{1}$, $\X^{2}$ two $B$-valued continuous stochastic processes admitting a $\chi$-covariation. 
Let $F^{1},F^{2}: B\longrightarrow \mathbb{R}$ be two functions of class $C^{1}$ in the Fr\'echet sense. 
We suppose moreover that the applications 
\begin{displaymath}
\begin{split}
D F^{i}(\cdot)\otimes D F^{j}(\cdot): B\times B & \longrightarrow \chi\subset (B \hat{\otimes}_{\pi} B)^{\ast}\\
(x,y) & \mapsto  D F^{i}(x)\otimes DF^{j}(y)
\end{split}
\end{displaymath}
are continuous for $i,j=1,2$.\\ 
Then, for  every $i,j \in \{1,2\}$,
 the covariation between $F^{i}(\X^{i})$ and $F^{j}(\X^{j})$ exists and is 
given by
\begin{equation} 		\label{eq SCHICOV}
[F^{i}(\X^{i}),F^{j}(\X^{j})] =\int_{0}^{\cdot} 
\langle DF^{i}(\X^{i}_{s})\otimes DF^{j}(\X^{j}_{s}),d
 \widetilde{[\X^{i},\X^{j}]}_{s} \rangle.
\end{equation}
\end{thm}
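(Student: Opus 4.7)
The strategy is to apply Proposition \ref{pr CONVCCOV} to the integrand $\mathbb{H}_s := DF^i(\X^i_s)\otimes DF^j(\X^j_s)$, which lives in $\chi$ by hypothesis, after using a first order Taylor formula to replace the product of increments of $F^i(\X^i)$ and $F^j(\X^j)$ by a $\chi$--$\chi^{\ast}$ pairing of the form covered by Proposition \ref{pr CONVCCOV}. Since $F^k\in C^{1}(B)$ Fr\'echet ($k=i,j$), for every $x,y\in B$,
\begin{equation*}
F^k(y)-F^k(x)=\int_0^1 DF^k\apt x+a(y-x)\cpt(y-x)\,da.
\end{equation*}
Taking $x=\X^k_s$, $y=\X^k_{s+\epsilon}$ and multiplying the two resulting identities yields, with
\begin{equation*}
G^{a,b}_{s,\epsilon}:=DF^i\apt \X^i_s+a(\X^i_{s+\epsilon}-\X^i_s)\cpt\otimes DF^j\apt \X^j_s+b(\X^j_{s+\epsilon}-\X^j_s)\cpt,
\end{equation*}
the identity
\begin{equation*}
\apt F^i(\X^i_{s+\epsilon})-F^i(\X^i_s)\cpt \apt F^j(\X^j_{s+\epsilon})-F^j(\X^j_s)\cpt =\int_{[0,1]^2}\langle G^{a,b}_{s,\epsilon},(\X^i_{s+\epsilon}-\X^i_s)\otimes(\X^j_{s+\epsilon}-\X^j_s)\rangle\,da\,db.
\end{equation*}

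Dividing by $\epsilon$, integrating over $[0,t]$, and adding and subtracting $\mathbb{H}_s$ inside the pairing produces a decomposition $I^{\epsilon}_t+R^{\epsilon}_t$, with
\begin{equation*}
I^{\epsilon}_t := \int_0^t\Big\langle \mathbb{H}_s,\, \tfrac{(\X^i_{s+\epsilon}-\X^i_s)\otimes(\X^j_{s+\epsilon}-\X^j_s)}{\epsilon}\Big\rangle\,ds,
\end{equation*}
\begin{equation*}
R^{\epsilon}_t := \int_0^t\!\!\int_{[0,1]^2}\Big\langle G^{a,b}_{s,\epsilon}-\mathbb{H}_s,\, \tfrac{(\X^i_{s+\epsilon}-\X^i_s)\otimes(\X^j_{s+\epsilon}-\X^j_s)}{\epsilon}\Big\rangle\,da\,db\,ds.
\end{equation*}
The process $s\mapsto \mathbb{H}_s$ is continuous into $\chi$ (composition of the jointly continuous map $DF^i\otimes DF^j$ with the continuous paths $\X^k$), and, since $B$ is separable, it takes values in a fixed closed separable subspace $\mathcal{V}\subset\chi$, namely the closed linear span of the image of $(x,y)\mapsto DF^i(x)\otimes DF^j(y)$. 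Proposition \ref{pr CONVCCOV} then yields $I^{\epsilon}_t \to \int_0^t\langle DF^i(\X^i_s)\otimes DF^j(\X^j_s),d\widetilde{[\X^i,\X^j]}_s\rangle$ in probability as $\epsilon\to 0$.

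The remainder is estimated pathwise by $|R^{\epsilon}_t|\leq \rho(\epsilon,\omega)\, V^{\epsilon}(\omega)$, where
\begin{equation*}
\rho(\epsilon,\omega):=\sup_{s\in[0,T],\, a,b\in[0,1]}\|G^{a,b}_{s,\epsilon}-\mathbb{H}_s\|_\chi,\qquad V^{\epsilon}(\omega):=\int_0^T \tfrac{\|(\X^i_{s+\epsilon}-\X^i_s)\otimes(\X^j_{s+\epsilon}-\X^j_s)\|_{\chi^{\ast}}}{\epsilon}\,ds.
\end{equation*}
Uniform continuity on $[0,T]$ of the paths $\X^i,\X^j$, combined with joint continuity of $DF^i\otimes DF^j:B\times B\to\chi$ on compact sets, forces $\rho(\epsilon,\omega)\to 0$ a.s.; along any $\epsilon_n\downarrow 0$, hypothesis \textbf{H1} provides a subsequence along which $V^{\epsilon_{n_k}}(\omega)$ remains a.s. bounded, whence $R^{\epsilon_{n_k}}_t\to 0$ a.s. and therefore $R^{\epsilon}_t\to 0$ in probability. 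Continuity of $\mathbb{H}$ and the bounded variation character of $\widetilde{[\X^i,\X^j]}$ ensure that the limit of $I^{\epsilon}_t$ admits a continuous modification, so $[F^i(\X^i),F^j(\X^j)]$ exists in the sense of Definition \ref{def cov} and equals \eqref{eq SCHICOV}. The main technical obstacle is the remainder: it is precisely the combination of the strong (into $\chi$, not into a larger dual) joint continuity hypothesis on $DF^i\otimes DF^j$ with the subsequential bound \textbf{H1}, since the $\chi^{\ast}$-norms of the tensor increments are not available pathwise.
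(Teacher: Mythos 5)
Your proposal is correct and follows essentially the same route as the paper: a Taylor/mean-value representation of the increments, convergence of the main term via Proposition \ref{pr CONVCCOV} (using separability to place $\mathbb{H}$ in a closed separable subspace of $\chi$), and control of the remainder by the uniform continuity of $DF^{i}\otimes DF^{j}$ on the compact closed convex hull of the path image together with the subsequential bound from \textbf{H1}. The only differences are bookkeeping: you treat $i\neq j$ directly with a double integral over $[0,1]^2$ and a single remainder, while the paper reduces to the quadratic case $F^1=F^2$, $\X^1=\X^2$ and expands the square into three terms $A_1,A_2,A_3$.
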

\begin{rem}			\label{rem JUSTCR}
In view of an application of Proposition \ref{pr CONVCCOV} in the proof of Theorem \ref{thm SCQVA}, we observe 
the following. 
 Since $B$ is separable and  $D F^{i}(\cdot)\otimes D F^{j}(\cdot):
 B\times B  \longrightarrow \chi$ is continuous,  the process 
$H_{t}= DF^{i}(\X^{i}_{t})\otimes DF^{j}(\X^{j}_{t})$ takes values in a separable closed subspace $\mathcal{V}$ of $\chi$.
\end{rem}
\begin{cor}  
Let us formulate the same assumptions as in Theorem  \ref{thm SCQVA}.
If there is a $\chi^{\ast}$-valued stochastic process $\mathbb{H}^{i,j}$ 
%Bochner integrable
 such that $\widetilde{[\X^{i},\X^{j}]}_{s}=\int_{0}^{s}\mathbb{H}^{i,j}_{u}\,du$
in the Bochner sense then
\begin{equation} \label{eq4.5}
[F^{i}(\X^{i}),F^{j}(\X^{j})]_t = \int_{0}^{t} 
\langle DF^{i}(\X^{i}_{s})\otimes DF^{j}(\X^{j}_{s}),
\mathbb{H}^{i,j}_{s}\rangle \,ds, \quad t \in [0,T].
\end{equation}
\end{cor}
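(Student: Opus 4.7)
The plan is to derive the corollary directly from Theorem \ref{thm SCQVA}, with the essential remaining task being to rewrite the Stieltjes-type integral against $\widetilde{[\X^i,\X^j]}$ as a Lebesgue--Bochner integral against its density $\mathbb{H}^{i,j}$.

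First I would apply Theorem \ref{thm SCQVA}, which under the stated hypotheses gives
\[
[F^{i}(\X^{i}),F^{j}(\X^{j})]_t =\int_{0}^{t}
\langle DF^{i}(\X^{i}_{s})\otimes DF^{j}(\X^{j}_{s}),d
 \widetilde{[\X^{i},\X^{j}]}_{s} \rangle.
\]
Setting $H_s := DF^{i}(\X^{i}_{s})\otimes DF^{j}(\X^{j}_{s})$, the integrand is continuous and, as noted in Remark \ref{rem JUSTCR}, takes values in a separable closed subspace $\mathcal{V}\subset \chi$. Hence the remaining task reduces to the identity
\begin{equation}\label{eq:identity}
\int_{0}^{t}\langle H_{s}, d\widetilde{[\X^{i},\X^{j}]}_{s}\rangle \;=\; \int_{0}^{t}\langle H_{s},\mathbb{H}^{i,j}_{s}\rangle\,ds,
\end{equation}
which is a general fact about Stieltjes-type integration against a $\chi^{\ast}$-valued bounded variation process that is Bochner absolutely continuous.

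Next I would prove \eqref{eq:identity} by Riemann sum approximation. For any partition $0=s_{0}<s_{1}<\cdots<s_{n}=t$, continuity of $s\mapsto H_s$ in $\chi$ together with the bounded variation of $\widetilde{[\X^{i},\X^{j}]}$ yields (by the very definition of the left-hand side of \eqref{eq:identity})
\[
\sum_{k=0}^{n-1}\bigl\langle H_{s_{k}},\widetilde{[\X^{i},\X^{j}]}_{s_{k+1}}-\widetilde{[\X^{i},\X^{j}]}_{s_{k}}\bigr\rangle \;\longrightarrow\; \int_{0}^{t}\langle H_{s}, d\widetilde{[\X^{i},\X^{j}]}_{s}\rangle
\]
as the mesh tends to zero. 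Using the Bochner representation $\widetilde{[\X^{i},\X^{j}]}_{s_{k+1}}-\widetilde{[\X^{i},\X^{j}]}_{s_{k}}=\int_{s_{k}}^{s_{k+1}}\mathbb{H}^{i,j}_{u}\,du$ and the pairing $\langle \phi,\cdot\rangle$ (for $\phi\in\chi$ fixed), each term rewrites as $\int_{s_{k}}^{s_{k+1}}\langle H_{s_{k}},\mathbb{H}^{i,j}_{u}\rangle\,du$, so the Riemann sum equals
\[
\int_{0}^{t}\langle H_{\sigma(u)},\mathbb{H}^{i,j}_{u}\rangle\,du,
\]
where $\sigma(u)$ is the left endpoint of the partition interval containing $u$. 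By continuity of $H$, the integrand $\langle H_{\sigma(u)},\mathbb{H}^{i,j}_{u}\rangle$ converges pointwise to $\langle H_{u},\mathbb{H}^{i,j}_{u}\rangle$ and is dominated in absolute value by $\sup_{s\leq T}\|H_{s}\|_{\chi}\,\|\mathbb{H}^{i,j}_{u}\|_{\chi^{\ast}}$, which is integrable by the Bochner integrability assumption. Dominated convergence then yields the right-hand side of \eqref{eq:identity}.

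The main (and only non-routine) obstacle is to make the Riemann sum interpretation of $\int_{0}^{t}\langle H_{s},d\widetilde{[\X^{i},\X^{j}]}_{s}\rangle$ coherent with the definition that underlies Proposition \ref{pr CONVCCOV}. If one prefers to avoid an abstract Stieltjes construction, an alternative is to verify \eqref{eq:identity} at the level of the $\varepsilon$-approximations: replace $\widetilde{[\X^{i},\X^{j}]}$ by $\widetilde{[\X^{i},\X^{j}]}^{\varepsilon}$, use that this process is Bochner absolutely continuous with an explicit density, apply Fubini pointwise in $\omega$, and pass to the limit through Proposition \ref{pr CONVCCOV} on one side and dominated convergence on the other. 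Either route produces \eqref{eq:identity}, and combining it with \eqref{eq SCHICOV} gives exactly \eqref{eq4.5}.
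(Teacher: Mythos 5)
Your proposal is correct and follows exactly the route the paper intends: the paper states this corollary without proof as an immediate consequence of Theorem \ref{thm SCQVA}, the only missing ingredient being the standard fact that a Stieltjes-type integral against a $\chi^{\ast}$-valued bounded variation process that is Bochner absolutely continuous reduces to the Lebesgue integral of the pairing with its density. Your Riemann-sum/dominated-convergence verification of that fact is a sound way to fill in the detail the paper leaves implicit.
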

\begin{proof} [Proof of Theorem \ref{thm SCQVA}]
We make use in an essential manner of Proposition \ref{pr CONVCCOV}. 
Without restriction of generality we only consider the case
 $F^{1}=F^{2}=F$ and $\X^{1}=\X^{2}=\X$.\\
Let $t \in [0,T]$. By definition of the quadratic variation of a
 real process in Definition
 \ref{def cov},
% and Lemma \ref{lem CPUCP}, we know that
 %$[F(\X)]_{\cdot}$ is the limit in the ucp
it will be enough to show that 
 the quantity
$$
\int_{0}^t
\frac{\left(F(\X_{s+\epsilon})-F(\X_{s})\right)^{2}}{\epsilon}ds  \; .
$$
converges in probability to the right-hand side of \eqref{eq4.5}.
%According to Lemma \ref{lem CPUCP}, it will be enough to show the convergence in probability for a fixed $t\in [0,T]$.\\
Using Taylor's expansion we have 
\begin{displaymath}
\begin{split}
\frac{1}{\epsilon}\int_{0}^{t}
\left(F(\X_{s+\epsilon})-F(\X_{s})\right)^{2}ds 
&
= \frac{1}{\epsilon}\int_{0}^{t} \Big(
\langle DF(\X_{s}),\X_{s+\epsilon}-\X_{s} \rangle +  \\
& \hspace{1.5cm} 
  +\int_{0}^{1} \langle DF\left((1-\alpha)\X_{s}+\alpha \X_{s+\epsilon}\right)-DF(\X_{s}),\X_{s+\epsilon}-\X_{s}\rangle \,d\alpha
 \Big)^{2}  ds=\\
%&
%=\frac{1}{\epsilon}\int_{0}^{t} 
%\langle DF(X_{s}),X_{s+\epsilon}-X_{s}   \rangle  ^{2}ds + \\
%&
%+\frac{2}{\epsilon} \int_{0}^{t} \langle DF(X_{s}),X_{s+\epsilon}-X_{s}\rangle\cdot\\
%&\hspace{1.5cm}
%\cdot \int_{0}^{1}\langle DF\left((1-\alpha)X_{s}+\alpha X_{s+\epsilon}\right)-DF(X_{s}),X_{s+\epsilon}-X_{s} \rangle d\alpha\, ds +\\
%&
%+ \frac{1}{\epsilon} \int_{0}^{t} \Big(  \int_{0}^{1} \langle DF\left(   (1-\alpha)X_{s}+\alpha X_{s+\epsilon}   \right)-DF(X_{s}),X_{s+\epsilon}-X_{s} \rangle d\alpha  \Big)^{2} ds=\\
&
=A_{1}(\epsilon)+A_{2}(\epsilon)+A_{3}(\epsilon),
\end{split}
\end{displaymath}
where
\begin{displaymath}
\begin{split}
A_{1}(\epsilon)& = \frac{1}{\epsilon}\int_{0}^{t} 
\langle DF(\X_{s}),\X_{s+\epsilon}-\X_{s}   \rangle  ^{2}ds = \\
&=
\int_{0}^{t}\langle DF(\X_{s})\otimes DF(\X_{s}), \frac{(\X_{s+\epsilon}-\X_{s})\otimes^{2}}{\epsilon}\rangle ds\\
A_{2}(\epsilon) 
&
=\frac{2}{\epsilon} \int_{0}^{t} \langle DF(\X_{s}),\X_{s+\epsilon}-\X_{s}\rangle\cdot\\
&\hspace{1.5cm}
\cdot \int_{0}^{1}\langle DF\left((1-\alpha)\X_{s}+\alpha \X_{s+\epsilon}\right)-DF(\X_{s}),\X_{s+\epsilon}-\X_{s} \rangle d\alpha\, ds =\\
& 
= 2\, \int_{0}^{t} \int_{0}^{1}\langle DF(\X_{s})\otimes \left( DF\left((1-\alpha)\X_{s}+\alpha \X_{s+\epsilon}\right)-DF(\X_{s})  \right), \frac{(\X_{s+\epsilon}-\X_{s})\otimes^{2}}{\epsilon} \rangle d\alpha\,ds \\
A_{3}(\epsilon) & =
 \frac{1}{\epsilon} \int_{0}^{t} \Big(  \int_{0}^{1} \langle DF\left(   (1-\alpha)\X_{s}+\alpha \X_{s+\epsilon}   \right)-DF(\X_{s}),\X_{s+\epsilon}-\X_{s} \rangle d\alpha  \Big)^{2} ds
\leq \\
&
\leq
\frac{1}{\epsilon} \int_{0}^{t}\int_{0}^{1} \langle  DF\left((1-\alpha)\X_{s}+\alpha \X_{s+\epsilon}\right)-DF(\X_{s}),\X_{s+\epsilon}-\X_{s} \rangle^{2} d\alpha \, ds=\\
& 
=
\int_{0}^{t} \int_{0}^{1}  \langle \left( DF\left((1-\alpha)\X_{s}+\alpha \X_{s+\epsilon}\right)-DF(\X_{s})  \right)\otimes^{2}  ,
\frac{(\X_{s+\epsilon}-\X_{s})\otimes^{2}}{\epsilon}\rangle d\alpha\, ds		\; .
\end{split}
\end{displaymath}
According to  Remark \ref{rem JUSTCR} and Proposition \ref{pr CONVCCOV} with $X=Y$,  it follows 
$$
A_{1}(\epsilon)\xrightarrow {\mathbb{P}} \int_{0}^{t}\langle DF(\X_{s}) \otimes DF(\X_{s}), d\widetilde{[\X]}_{s}\rangle \; .
$$
It remains to show the convergence in probability of $A_{2}(\epsilon)$ and $A_{3}(\epsilon)$ to zero.\\ 
About $A_{2}(\epsilon)$ 
 the following decomposition holds:
\begin{equation}  \label{scomposizione per termine 2}
DF(\X_{s})\otimes \left( DF\left((1-\alpha)\X_{s}+\alpha \X_{s+\epsilon}\right)-
DF(\X_{s})  \right) = 
DF(\X_{s})\otimes DF\left((1-\alpha)\X_{s}+\alpha \X_{s+\epsilon}\right) - DF(\X_{s})\otimes DF(\X_{s});
\end{equation}
 concerning $A_{3}(\epsilon)$ we get
\begin{equation}  \label{scomposizione per termine 3}
\begin{split}
\left( DF \left( (1-\alpha)\X_{s}+\alpha \X_{s+\epsilon}\right)-DF(\X_{s})  \right)\otimes^{2}  
&
=
DF\left((1-\alpha)\X_{s}+\alpha \X_{s+\epsilon}\right)  \otimes^{2}+\\
&
- DF\left((1-\alpha)\X_{s}+\alpha \X_{s+\epsilon}\right)  \otimes DF(\X_{s})+\\
&
+ 
DF(\X_{s}) \otimes  DF(\X_{s})+\\
&
- DF(\X_{s}) \otimes  DF\left((1-\alpha)\X_{s}+\alpha \X_{s+\epsilon}\right)  \; . \\
%&
%=DF\left((1-\alpha)\X_{s}+\alpha \X_{s+\epsilon}\right) \otimes \apq DF\left((1-\alpha)\X_{s}+\alpha \X_{s+\epsilon}\right)-  DF(\X_{s}) \cpq\\
%&+ DF(\X_{s}) \otimes  \apq DF(\X_{s}) -DF\left((1-\alpha)\X_{s}+\alpha \X_{s+\epsilon}\right)  \cpq    
\end{split}
\end{equation}
%Then, for a fixed $\omega$ and for a suitable subsequence, 
Using \eqref{scomposizione per termine 2}, we obtain
\begin{equation}  \label{eq 8.4}
\begin{split}
\left| A_{2}(\epsilon)  \right| & 
\leq 2 \int_{0}^{t} \int_{0}^{1}
\left| \langle DF(\X_{s})\otimes \left( DF\left((1-\alpha)\X_{s}+\alpha \X_{s+\epsilon}\right)-DF(\X_{s})  \right), \frac{(\X_{s+\epsilon}-\X_{s})
\otimes^{2}}{\epsilon} \rangle \right| d\alpha\,ds \leq \\
&
\leq \int_{0}^{t} \int_{0}^{1}
\left\| DF(\X_{s})\otimes DF\left((1-\alpha)\X_{s}+\alpha \X_{s+\epsilon}\right)- DF(\X_{s})\otimes DF(\X_{s})  \right\|_{\chi}
\left\| \frac{(\X_{s+\epsilon}-\X_{s})\otimes^{2}}{\epsilon} \right\|_{\chi^{\ast}} d\alpha\,ds \; . %\leq \\
\end{split}
\end{equation}
For fixed $\omega \in \Omega$ we denote 
by $\mathcal{V}(\omega):=\{ \X_{t}(\omega);\, t\in [0,T]\}$ and 
\begin{equation}		\label{def U}
\mathcal{U}=\mathcal{U}(\omega)=\overline{conv(\mathcal{V}(\omega))},
\end{equation} 
i.e. 
the set $\mathcal{U}$ is the closed convex hull of the compact subset $\mathcal{V}(\omega)$ of $B$. 
From \eqref {eq 8.4} we deduce 
\begin{displaymath}
\begin{split}
\left| A_{2}(\epsilon)  \right| 
&
\leq \varpi^{\mathcal{U}\times \mathcal{U}}_{DF\otimes DF}\left( \varpi_{\X}( \epsilon)  \right)\int_{0}^{t} \left\| \frac{(\X_{s+\epsilon}-\X_{s})\otimes^{2}}{\epsilon} \right\|_{\chi^{\ast}} ds,\\
\end{split}
\end{displaymath}
where $\varpi^{\mathcal{U}\times \mathcal{U}}_{DF\otimes DF}$ 
is the continuity modulus of the application $DF(\cdot)\otimes DF(\cdot):B\times B \longrightarrow \chi$ restricted to $\mathcal{U}\times \mathcal{U}$ and $\varpi_{\X}$ is the continuity 
modulus of the continuous process $\X$. 
We recall that 
\[
\varpi^{\mathcal{U}\times \mathcal{U}}_{DF\otimes DF}(\delta)=
\sup_{\|(x_{1},y_{1})-(x_{2},y_{2})\|_{B\times B}\leq\delta  }  
\left\| DF(x_{1})\otimes DF(y_{1})-DF(x_{2})\otimes DF(y_{2})  \right\|_{\chi}
\] where the space $B\times B $ is equipped with the norm obtained summing the norms of the two components.\\
%
%The set $\mathcal{U}$ is the closed convex hull of the compact subset $\mathcal{V}$ of $B$, i.e.
%\begin{equation}			\label{def U}
%\mathcal{U}=\overline{conv(\mathcal{V})}  
%\hspace{1cm}
%\textrm{ where set $\mathcal{V}$ is defined, every fixed $\omega$, by }
%\hspace{1cm}
%\mathcal{V}=\mathcal{V}(\omega):=\{ X_{t}(\omega);\, t\in [0,T]\}
%\end{equation} 
According to Theorem 5.35 in \cite{InfDimAn}, $\mathcal{U}(\omega)$ 
is compact, 
so the function $DF(\cdot)\otimes DF(\cdot)$ on $\mathcal{U}(\omega)\times \mathcal{U}(\omega)$ is uniformly continuous and 
$ \varpi^{\mathcal{U}\times \mathcal{U}}_{DF\otimes DF}$ is a positive, increasing function on $\mathbb{R}^{+}$ converging to 
$0$ when the argument converges to zero.\\
%
%Analogously  $\varpi_{X}$  is the continuity modulus of $X$ in the
%compact space $[0,T]$
%\[
%\varpi_{X}(\epsilon)=\sup_{|s-t|\leq \epsilon}  \left\|  X_{s}-X_{t} \right\|_{B}
%\]
%$\varpi_{X}$ and $\varpi_{DF\otimes DF}$ are positive,
%increasing functions converging to $0$.\\
%
%
%where $\varpi_{f}(\cdot)$ denotes the modulus of continuity of a function $f:E\rightarrow F$, $E$ and $F$ being two Banach spaces, i.e. $\varpi_{f}(\cdot)=\sup_{\|x-y\|_{E}\leq \epsilon}\|f(x)-f(y)\|_{F}$.
%
%
Let $(\epsilon_{n})$ converging to zero; Condition {\bf H1} in the definition of $\chi$-quadratic variation, implies the existence of a subsequence 
$(\epsilon_{n_{k}})$ such that $A_{2}(\epsilon_{n_{k}})$ converges to zero a.s. This implies that $A_{2}(\epsilon)\rightarrow 0$ in probability.\\
With similar arguments, using \eqref{scomposizione per termine 3}, we can show that $A_{3}(\epsilon)\rightarrow 0$ in probability. We observe in fact 
\begin{displaymath}
\begin{split}
\left| A_{3}(\epsilon)  \right| & \leq 
 \int_{0}^{t} \int_{0}^{1}  \left\|  DF\left((1-\alpha)\X_{s}+\alpha \X_{s+\epsilon}\right) \otimes^{2} - DF(\X_{s}) \otimes DF\left((1-\alpha)\X_{s}+\alpha \X_{s+\epsilon}\right)  \right\|_{\chi}\cdot\\
&\hspace{10cm}\cdot
 \left\|  \frac{(\X_{s+\epsilon}-\X_{s})\otimes^{2}}{\epsilon} \right\|_{\chi^{\ast}}  d\alpha\, ds+\\
 &
 +
 \int_{0}^{t} \int_{0}^{1}  \left\|  DF\left((1-\alpha)\X_{s}+\alpha \X_{s+\epsilon}\right) \otimes  DF(\X_{s}) - DF(\X_{s}) \otimes ^{2}  \right\|_{\chi}
 \left\|  \frac{(\X_{s+\epsilon}-\X_{s})\otimes^{2}}{\epsilon} \right\|_{\chi^{\ast}}  d\alpha\, ds \leq\\
&
\leq 2 \varpi^{\mathcal{U}\times \mathcal{U} }_{DF\otimes DF}\left( \varpi_{\X}( \epsilon)  \right)\int_{0}^{t} \left\| \frac{(\X_{s+\epsilon}-\X_{s})\otimes^{2}}{\epsilon} \right\|_{\chi^{\ast}} ds  \;.
\end{split}
\end{displaymath}
The result is now established.
\end{proof}
\begin{cor}		\label{cor B0}  
Let $B$ be a separable Banach space and  $B_{0}$ be 
a Banach space such that  $B_{0}\supset B$
continuously. Let 
%$\chi$ be a Chi-subspace of $(B\hat{\otimes}_{\pi}B)^{\ast}$ such that
 $\chi=(B_{0}\hat{\otimes}_{\pi}B_{0})^{\ast}$ 
and  $\X$ a continuous $B$-valued stochastic process admitting a 
$\chi$-quadratic variation. 
Let $F^{1},F^{2}: B\longrightarrow \mathbb{R}$ be functions of class $C^{1}$ Fr\'echet such that $DF^{i}$, $i=1,2$ are continuous as applications 
from $B$ to $B^{\ast}_{0}$.\\
Then the covariation of $F^{i}(\X)$ and $F^{j}(\X)$ exists and it is given by
\begin{equation} 		\label{eq STCCOVB0}
[F^{i}(\X),F^{j}(\X)]_{\cdot}=\int_{0}^{\cdot} \langle DF^{i}(\X_{s})\otimes DF^{j}(\X_{s}),d \widetilde{[\X]}_{s} \rangle  \; .
\end{equation}
\end{cor}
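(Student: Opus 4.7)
The plan is to reduce the corollary to a direct application of Theorem \ref{thm SCQVA}, which is essentially already tailored to this kind of result. Two things must be checked before invoking it: (i) that $\chi = (B_{0}\hat{\otimes}_{\pi}B_{0})^{\ast}$ can be viewed as a Chi-subspace of $(B\hat{\otimes}_{\pi}B)^{\ast}$ in the sense of Definition \ref{DefChiCOV}, and (ii) that for $i,j\in\{1,2\}$ the map $(x,y)\mapsto DF^{i}(x)\otimes DF^{j}(y)$ is continuous from $B\times B$ into that $\chi$.

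For (i), the continuous inclusion $\iota:B\hookrightarrow B_{0}$ induces, by the universal property of the projective tensor product, a bounded linear operator $\iota\otimes\iota:B\hat{\otimes}_{\pi}B\to B_{0}\hat{\otimes}_{\pi}B_{0}$. Taking its adjoint yields a bounded linear map $(\iota\otimes\iota)^{\ast}:(B_{0}\hat{\otimes}_{\pi}B_{0})^{\ast}\to(B\hat{\otimes}_{\pi}B)^{\ast}$, which concretely amounts to restricting bilinear forms on $B_{0}\times B_{0}$ to $B\times B$; through this map $\chi$ is continuously injected into $(B\hat{\otimes}_{\pi}B)^{\ast}$ and therefore qualifies as a Chi-subspace. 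For (ii), Proposition \ref{prop RRCBL} identifies $DF^{i}(x)\otimes DF^{j}(y)$ as an element of $(B_{0}\hat{\otimes}_{\pi}B_{0})^{\ast}=\chi$ with norm $\|DF^{i}(x)\|_{B_{0}^{\ast}}\|DF^{j}(y)\|_{B_{0}^{\ast}}$, and the bilinear splitting
\[
DF^{i}(x_{1})\otimes DF^{j}(y_{1})-DF^{i}(x_{2})\otimes DF^{j}(y_{2}) = \bigl(DF^{i}(x_{1})-DF^{i}(x_{2})\bigr)\otimes DF^{j}(y_{1}) + DF^{i}(x_{2})\otimes\bigl(DF^{j}(y_{1})-DF^{j}(y_{2})\bigr),
\]
combined with the standing hypothesis that $DF^{i},DF^{j}:B\to B_{0}^{\ast}$ are continuous, gives immediately the desired continuity on $B\times B$.

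Once (i) and (ii) are in place, Theorem \ref{thm SCQVA} applied to $\X^{1}=\X^{2}=\X$ and the given $F^{1},F^{2}$ produces both the existence of $[F^{i}(\X),F^{j}(\X)]$ and the explicit formula \eqref{eq STCCOVB0}, as a direct consequence of \eqref{eq SCHICOV}. The only step that requires any genuine care is the identification in (i): one must keep clear the distinction between the ambient space $(B\hat{\otimes}_{\pi}B)^{\ast}$ of Chi-subspaces in our framework and the smaller dual $(B_{0}\hat{\otimes}_{\pi}B_{0})^{\ast}$ brought in by the auxiliary Banach space $B_{0}$; everything else is then essentially routine, being controlled by the standard bilinear estimate of Proposition \ref{prop RRCBL}.
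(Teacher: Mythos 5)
Your proposal is correct and follows essentially the same route as the paper: both reduce the corollary to Theorem \ref{thm SCQVA} by observing that $\chi=(B_{0}\hat{\otimes}_{\pi}B_{0})^{\ast}$ is a Chi-subspace of $(B\hat{\otimes}_{\pi}B)^{\ast}$ and that $(x,y)\mapsto DF^{i}(x)\otimes DF^{j}(y)$ lands continuously in $\chi$ via Proposition \ref{prop RRCBL}. You merely spell out the two verifications (the adjoint of $\iota\otimes\iota$ and the bilinear splitting with the norm identity $\|S\otimes T\|=\|S\|\,\|T\|$) that the paper leaves implicit.
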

\begin{proof}  \
It  is clear that  $\chi$ is a Chi-subspace of 
$(B\hat{\otimes}_{\pi}B)^{\ast}$.
For any given $x,y\in B$, $i,j=1,2$, by the 
characterization of $DF^{i}(x)\otimes DF^{j}(y) $ given in Proposition \ref{prop RRCBL} and Remark \ref{rem 7.2}, the following applications 
\[
DF^{i}(x)\otimes DF^{j}(y):B_{0}\hat{\otimes}_{\pi}B_{0} \longrightarrow \mathbb{R}
\]
are continuous for $i,j \in \{1,2\}$. The result follows by
 Theorem \ref{thm SCQVA}.
\end{proof}
\begin{rem}			\label{rem B0H}
Under the same assumptions as Corollary \ref{cor B0} we suppose moreover that $B_{0}$ is a Hilbert space.
For any $x, y\in B$, $DF(x)\otimes DG(y)$ belongs to $\left(B_{0}\hat{\otimes}_{h}B_{0} \right)^{\ast}$ because of Proposition \ref{prop RRCBLbis} 
and it will be associated to a true tensor product in the sense explained in the same proposition.
\end{rem}
We discuss rapidly the finite dimensional framework. A detailed analysis was performed in Paragraph 1, Chapter 6 of \cite{DGR}.
We also recall that 
$\mathbb{R}^{n} \hat{\otimes}_{\pi}\mathbb{R}^{n}$ can be identified with the space of matrices $\mathbb{M}_{n\times n}(\mathbb{R})$. 
Since $\mathbb{R}^{n} \hat{\otimes}_{\pi}\mathbb{R}^{n}$ is finite dimensional and all the topologies are equivalent, it is enough to show the 
identification on $\R^{n}\otimes \R^{n}$.
In fact let $u\in \R^{n}\otimes \R^{n}$ in the form  $u=\sum_{1\leq i,j\leq n} u_{i,j}\; e_{i}\otimes e_{j}$ 
where $(e_{i})_{1\leq i\leq n }$ is the canonical basis for $\mathbb{R}^{n}$. 
To $u$ is possible to associate a unique matrix $U=(u_{i,j})_{1\leq i, j\leq n}$, $U\in \mathbb{M}_{n\times n} (\mathbb{R})$. 
Conversely given a matrix $U\in \mathbb{M}_{n\times n} (\mathbb{R})$ of the form $U=(U_{i,j})_{1\leq i, j\leq n}$, we ssociate
the unique element $u\in  \R^{n}\otimes \R^{n}$ in the form $u=\sum_{1\leq i,j \leq n} U_{i,j}\; e_{i}\otimes e_{j}$.
Concerning the dual space we have 
 $(\R^{n}\otimes \R^{n})^{\ast}\cong L(\mathbb{R}^{n};
L(\mathbb{R}^{n}))$ which is naturally identified with $\mathbb{M}_{n\times n} (\mathbb{R})$. 
So a matrix $T\in\mathbb{M}_{n\times n} (\mathbb{R})$ of the form 
$T=(T_{i,j})_{1\leq i \leq m,1\leq j \leq n}$ is associated with the linear form $t:\R^{n}\otimes \R^{n}\longrightarrow \R$ such that 
$t(x\otimes y)=\prescript{}{\R^{n}}{\langle} T x\; , y\rangle_{ \R^{n}}$.\\ 
Moreover the duality pairing between an element
 $t\in (\mathbb{R}^{n}\hat{\otimes}_{\pi}\mathbb{R}^{n})^{\ast}$ 
 and an element $u\in (\mathbb{R}^{n}\hat{\otimes}_{\pi}\mathbb{R}^{n})$
(or simply $(\mathbb{R}^{n}\otimes \mathbb{R}^{m}$), denoted 
by $\langle t,u\rangle$, coincides with the trace $Tr(TU)$, whenever $U$ (resp. $T$) is the $\mathbb{M}_{n\times n}
 (\mathbb{R})$ matrix
 associated with $u$ (resp. $t$). 
\begin{ese} \label{EFD}
Let $\X=(X^{1}, \cdots, X^{n})$ be a $\mathbb{R}^{n}$-valued stochastic process admitting all its mutual covariations, and 
$F,G: \mathbb{R}^{n}\longrightarrow \mathbb{R}$ $\in C^{1}(\mathbb{R}^{n})$. 
We recall that $\X$ admits a global quadratic variation 
$\widetilde{[\X]}$ 
% [0,T]\longrightarrow (\mathbb{R}^{n} \hat{\otimes}_{\pi}\mathbb{R}^{n})^{\ast\ast}\cong (\mathbb{R}^{n} \hat{\otimes}_{\pi}\mathbb{R}^{n})$
% for any $t\in [0,T]$, $\omega$ a.s. and 
which coincides with the tensor element associated with the matrix $([\X^{\ast},\X])_{1\leq i,j\leq n}:=([X^{i},X^{j}])_{i,j}$, see Proposition 6.2 in \cite{DGR}. \\
The application of Theorem \ref{thm SCQVA} to this context provides a new proof of Proposition \ref{pr STMLT}.\\ 
According to Proposition 6.2 item 2.(b) in \cite{DGR}, the right-hand side of \eqref{eq SCHICOV} equals
\[
\int_{0}^{\cdot} Tr \left(DF(\X_{s})\otimes DG(\X_{s}) \cdot d[\X^{\ast}, \X]_{s} \right)
\]
which coincides with the right-hand side of \eqref{eq STMLT}.
\end{ese}

\section{Transformation of window Dirichlet processes and window weak Dirichlet processes}  	\label{sec:Dirichlet}  
\subsection{Some preliminary result on measure theory}
We set now $B=C([-\tau,0])$ and we formulate now some related 
Fukushima type decomposition involving $B$-valued window Dirichlet and window weak Dirichlet processes. 
First we need a preliminary result on measure theory.\\ 
We start with some notations appearing for instance in \cite{dincuvisi}, Chapter 1, Section D, Definition 18. 
Let $E$ be a Banach space and $g:[0,T]\longrightarrow E^*$ be a bounded variation function. 
Then the real function $\|g\|_{E^*}:[0,T]\longrightarrow \R$ has also bounded variation. If $f:[0,T]\longrightarrow E$ is a Bochner measurable, then the Bochner integral 
$\int_{0}^T \prescript{}{E}{\langle} f(s),dg(s)\rangle_{E^*}$ is well-defined provided that 
\begin{equation}	\label{EQBI}
\int_{0}^T \| f(s)\|_{E} d\| g\| _{E^*}(s)< +\infty \; .
\end{equation}
We denote by $L^{1}_{E}(g)$ the linear space of functions $f$ verifying \eqref{EQBI}.
\begin{lem}	\label{lemma misura}  
Let $E$ be a topological  direct sum $E_{1}\oplus E_{2}$ where $E_{1}$, $E_{2}$ 
are Banach spaces equipped with norms $\|\cdot\|_{E_{i}}$.
We denote by $P_{i}$ the projectors $P_{i}:E \rightarrow E_{i}$, $i\in 1,2$.\\ 
Let $\tilde{g}:[0,T]\rightarrow E^{\ast}$
  %be continuous with bounded variation. 
and we define     
$\tilde{g}_{i}:[0,T]\rightarrow E_{i}^{\ast}$ setting 
$\tilde{g}_{i}(t)(\eta):=\tilde{g}(t)(\eta)$ for all $\eta\in E_{i}$, i.e. the restriction of $\tilde{g}(t)$ to $E_{i}^{\ast}$. 
We suppose $\tilde{g}_{i}$ continuous with bounded variation, $i=1,2$.\\ 
Let $f:[0,T]\rightarrow E$ measurable with projections $f_{i}:=P_{i}(f)$ defined from $[0,T]$ to $E_{i}$.\\
Then the following statements hold.
\begin{enumerate}
\item $f$ in $L^{1}_{E}(\tilde{g})$ %(for instance cadlag) 
if and only if $f_{i}$ in $L^{1}_{E_{i}}(\tilde{g}_{i}), i = 1,2 $  and yields
\begin{equation}		\label{eq E456}
\int_{0}^{t}\prescript{}{E}{\langle}  f(s), d\tilde{g}(s)\rangle_{E^{\ast}}=
\int_{0}^{t}\prescript{}{E_{1}}{\langle}  f_{1}(s), d\tilde{g}_{1}(s)\rangle_{E_{1}^{\ast}}+
\int_{0}^{t}\prescript{}{E_{2}}{\langle}  f_{2}(s), d\tilde{g}_{2}(s)\rangle_{E_{2}^{\ast}}			\; .
\end{equation}
\item
If $\tilde{g}_{2}(t)\equiv 0$ and 
 $f_{1}$ in $L^{1}_{E_{i}}(\tilde{g}_{1})$
  then
\begin{equation} \label{eq: 20}
\int_{0}^{t}\prescript{}{E}{\langle}  f(s), d\tilde{g}(s)\rangle_{E^{\ast}}=
\int_{0}^{t}\prescript{}{E_{1}}{\langle}  f_{1}(s),
 d\tilde{g}_{1}(s)\rangle_{E_{1}^{\ast}}		\; .
\end{equation}
%\item Let $E_{1}=\mathcal{D}_{i,j}([-\tau,0]^{2})$ and $E_{2}$ be a 
%Chi-subspace of $\mathcal{M}([-\tau,0]^{2})$.\\
%We suppose that $f$ in $L^{1}_{E}(\tilde{g})$ and $\tilde{g}_{2}(t)\equiv 0$. 
%We define $g_{1}:[0,T]\rightarrow \R$ by $g_{1}(t)=
% \prescript{}{E_{1}}{\langle} \delta_{(a_{i},a_{j})}, 
%\tilde{g}_{1}(t)\rangle_{E^{\ast}_{1}}$.\\
%Then $g_{1}$ is a continuous bounded variation function and 
% $\mu=\lambda\, \delta_{a_{i}}\otimes \delta_{a_{j}}$ in $\shd_{ij}$, $\lambda=\mu(\{a_{i},a_{j}\})$ real number.
%\begin{equation}		\label{eq 5.15}
%\int_{0}^{t}\prescript{}{E}{\langle}  f(s), d\tilde{g}(s)\rangle_{E^{\ast}}=
%\int_{0}^{t} f(s)(\{ a_{i}, a_{j}\}) dg_{1}(s)  \; .
%\end{equation}
\end{enumerate}
\end{lem}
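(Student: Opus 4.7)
The plan rests on the topological direct sum decomposition $E=E_1\oplus E_2$. It yields the norm equivalence $\|e\|_E\sim \|P_1e\|_{E_1}+\|P_2e\|_{E_2}$ on $E$ and, by duality, the equivalence $\|\phi\|_{E^{\ast}}\sim \|\phi|_{E_1}\|_{E_1^{\ast}}+\|\phi|_{E_2}\|_{E_2^{\ast}}$ on $E^{\ast}$, where $\phi|_{E_i}$ denotes the restriction of $\phi$ to $E_i$. With these identifications the duality pairing decomposes pointwise: for every $s\in[0,T]$,
$$\prescript{}{E}{\langle} f(s),\tilde{g}(s)\rangle_{E^{\ast}}=\prescript{}{E_1}{\langle} f_1(s),\tilde{g}_1(s)\rangle_{E_1^{\ast}}+\prescript{}{E_2}{\langle} f_2(s),\tilde{g}_2(s)\rangle_{E_2^{\ast}}.$$

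To prove item 1, I would first promote the pointwise norm equivalence on $E^{\ast}$ to a comparability of total-variation measures on $[0,T]$: for any partition $0=t_0<\cdots<t_n\leq t$, applying the dual norm equivalence to each increment $\tilde{g}(t_k)-\tilde{g}(t_{k-1})$ gives $\sum_k\|\tilde{g}(t_k)-\tilde{g}(t_{k-1})\|_{E^{\ast}}\sim \sum_k\|\tilde{g}_1(t_k)-\tilde{g}_1(t_{k-1})\|_{E_1^{\ast}}+\sum_k\|\tilde{g}_2(t_k)-\tilde{g}_2(t_{k-1})\|_{E_2^{\ast}}$; passing to the supremum over partitions and refining yields $\|\tilde{g}\|_{E^{\ast}}\sim \|\tilde{g}_1\|_{E_1^{\ast}}+\|\tilde{g}_2\|_{E_2^{\ast}}$ as positive Borel measures on $[0,T]$. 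Coupled with the pointwise equivalence $\|f(s)\|_E\sim \|f_1(s)\|_{E_1}+\|f_2(s)\|_{E_2}$, this delivers the biconditional $f\in L^1_E(\tilde{g})\iff f_i\in L^1_{E_i}(\tilde{g}_i)$ for $i=1,2$. The identity \eqref{eq E456} is then obtained by the standard Bochner strategy: first for $E$-simple functions $f=\sum_k x_k\,\mathbf{1}_{A_k}$ with $x_k=P_1x_k+P_2x_k$, where the integral reduces to a finite sum of pairings that splits through the pointwise decomposition displayed above; then by density, approximating a general $f\in L^1_E(\tilde{g})$ by simple functions whose projections $P_i f$ are simultaneously approximating in $L^1_{E_i}(\tilde{g}_i)$, and invoking continuity of the three Bochner integrals involved.

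For item 2, $\tilde{g}_2\equiv 0$ makes the restriction of $\tilde{g}(s)$ to $E_2$ vanish, so $\prescript{}{E}{\langle} f_2(s),\tilde{g}(s)\rangle_{E^{\ast}}=0$ pointwise and $\|\tilde{g}(s)\|_{E^{\ast}}\leq \|P_1\|\,\|\tilde{g}_1(s)\|_{E_1^{\ast}}$; consequently the hypothesis $f_1\in L^1_{E_1}(\tilde{g}_1)$ suffices to place $f$ in $L^1_E(\tilde{g})$, since the $f_2$-contribution to \eqref{EQBI} is trivially null. Applying item 1 in this degenerate situation annihilates the $E_2$-summand on the right-hand side of \eqref{eq E456} and delivers \eqref{eq: 20}. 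The main technical obstacle is the passage from the pointwise dual norm equivalence to the measure-theoretic comparability of $\|\tilde{g}\|_{E^{\ast}}$ with $\|\tilde{g}_1\|_{E_1^{\ast}}+\|\tilde{g}_2\|_{E_2^{\ast}}$ on $[0,T]$; this is however controlled because the constants in the norm equivalence are intrinsic to $E=E_1\oplus E_2$ and therefore uniform in the partition.
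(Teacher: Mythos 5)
Your proposal is correct and follows essentially the same route as the paper: split the duality pairing pointwise through the projections, verify the identity \eqref{eq E456} on $E$-valued step functions, and pass to general $f$ by density, with item 2 obtained as the degenerate case of item 1. The only difference is one of completeness rather than method — you spell out the two-sided comparability of the total-variation measures $\|\tilde g\|_{E^{\ast}}$ and $\|\tilde g_1\|_{E_1^{\ast}}+\|\tilde g_2\|_{E_2^{\ast}}$ needed for the ``if'' direction of the integrability equivalence, whereas the paper only records the easy implication $\|P_i f\|_{E_i}\leq\|f\|_E$ and leaves the converse implicit.
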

\begin{proof}\
\begin{enumerate}
\item 
By the hypothesis on $\tilde{g}_{i}$ we deduce that $\tilde{g}:[0,T]\rightarrow E^{\ast}$ has bounded variation. 
If $f:[0,T]\rightarrow E$ belongs to $L^{1}_{E}$, then $f_{i}=P_{i}(f):[0,T]\rightarrow E_{i}$, 
$i=1,2$ belong to $L^{1}_{E_{i}}$ by the property $\|P_{i}f\|_{E_{i}}\leq \|f\|_{E}$.\\
We prove \eqref{eq E456} for a step function $f:[0,T]\rightarrow E$ defined by 
$f(s)=\sum_{j=1}^{N}\phi_{A_{j}}(s) f_{j}$ with $\phi_{A_{j}}$ indicator functions of the subsets 
$A_{j}$ of $[0,T]$ and $f_{j}\in E$. We have $f_{j}=f_{1j}+f_{2j}$ with $f_{ij}=P_{i}f_{j}$, $i=1,2$, so
\begin{displaymath}
\begin{split}
\int_{0}^{T} \prescript{}{E}{\langle}  f(s), d\tilde{g}(s) \rangle_{E^{\ast}}  &
=
\sum_{j=1}^{N} \int_{A_{j}}   \prescript{}{E}{\langle} f_{j},  d\tilde{g}(s)\rangle_{E^{\ast}} =
\sum_{j=1}^{N} \prescript{}{E}{\langle} f_{j}, \int_{A_{j}} d\tilde{g}( s) \rangle_{E^{\ast}}
=
\sum_{j=1}^{N} \prescript{}{E}{\langle} f_{j}, d\tilde{g}(A_{j})\rangle_{E^{\ast}}=\\
&
=
\sum_{j=1}^{N}\prescript{}{E_{1}}{\langle} f_{1j}, d\tilde{g}_{1}(A_{j})\rangle_{E_{1}^{\ast}} +\sum_{j=1}^{N}\prescript{}{E_{2}}{\langle} f_{2j}, d\tilde{g}_{2}(A_{j})\rangle_{E_{2}^{\ast}} =\\
&=
 \int_{0}^{T} \prescript{}{E_{1}}{\langle} f_{1}(s) , d\tilde{g}_{1}(s)\rangle_{E_{1}^{\ast}} 
 +\int_{0}^{T} \prescript{}{E_{2}}{\langle} f_{2}(s) , d\tilde{g}_{2}(s)\rangle_{E_{2}^{\ast}} \; .
\end{split} 
\end{displaymath}
A general function $f$ in $L^{1}_{E}(\tilde{g})$ is a sum of 
$f_{1}+f_{2}$, $f_{i}\in L^{1}_{E_{i}}(\tilde{g}_{i})$ for $i=1,2 $. Both $f_{1}$ and $f_{2}$ can be approximated 
by step functions. Vector integration $L^{1}_{E}(\tilde{g})$, as well as on $L^{1}_{E_{i}}(\tilde{g}_{i})$, is defined by density on step functions. 
The result follows by an approximation argument.
\item It follows directly by 1.
\end{enumerate}
\end{proof}

A useful consequence  of Lemma \ref{lemma misura} is the following.
\begin{prop} \label{lemma misura1}
Let $E_{1}=\mathcal{D}_{i,j}([-\tau,0]^{2})$ and $E_{2}$ be a 
Banach subspace of $\mathcal{M}([-\tau,0]^{2})$
such that $E_1 \cap E_2 = \{0\}$.
\begin{itemize}
\item Let $\tilde g:[0,T] \rightarrow E^\ast$ 
 such that 
$ \tilde g(t)_{\vert E_2} \equiv 0$.
\item We set 
 $g_{1}:[0,T]\rightarrow \R$ by $g_{1}(t)=
 \prescript{}{E_{1}}{\langle} \delta_{(a_{i},a_{j})},
\tilde{g}_{1}(t)\rangle_{E^{\ast}_{1}}$,
supposed continuous with bounded variation.
%\tilde{g}_{1}(t)\rangle_{E^{\ast}_{1}}$.\\
\item Let  $f: [0,T] \rightarrow  E$ such that 
$t \rightarrow f(t)(\{(a_i,a_j\}) \in L^1(d\vert g_1\vert).$
\end{itemize}
Then
\begin{equation}		\label{eq 5.15}
\int_{0}^{t}\prescript{}{E}{\langle}  f(s), d\tilde{g}(s)
\rangle_{E^{\ast}}=
\int_{0}^{t} f(s)(\{ a_{i}, a_{j}\}) dg_{1}(s)  \; .
\end{equation}
%We suppose that $f$ in $L^{1}_{E}(\tilde{g})$ and $\tilde{g}_{2}(t)\equiv 0$. 
%We define $g_{1}:[0,T]\rightarrow \R$ by $g_{1}(t)=
% \prescript{}{E_{1}}{\langle} \delta_{(a_{i},a_{j})}, 
%\tilde{g}_{1}(t)\rangle_{E^{\ast}_{1}}$.\\
%Then $g_{1}$ is a continuous bounded variation function and 
% $\mu=\lambda\, \delta_{a_{i}}\otimes \delta_{a_{j}}$ in $\shd_{i, j}$, $\lambda=\mu(\{a_{i},a_{j}\})$ real number.
%\begin{equation}		\label{eq 5.15}
%\int_{0}^{t}\prescript{}{E}{\langle}  f(s), d\tilde{g}(s)\rangle_{E^{\ast}}=
%\int_{0}^{t} f(s)(\{ a_{i}, a_{j}\}) dg_{1}(s)  \; .
%\end{equation}
\end{prop}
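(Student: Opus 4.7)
The plan is to reduce the statement directly to part 2 of Lemma \ref{lemma misura}, applied with $E = E_1 \oplus E_2$. First I would check that the hypotheses of that lemma are met: by assumption $\tilde g_2(t) := \tilde g(t)_{\vert E_2} \equiv 0$, so in particular $\tilde g_2$ is trivially continuous of bounded variation; and $\tilde g_1(t)_{\vert E_1}$ is continuous with bounded variation because $E_1$ is one-dimensional and $g_1(t) = \langle \delta_{(a_i,a_j)}, \tilde g_1(t)\rangle_{E_1^\ast}$ is assumed to be so. Writing $f(s) = f_1(s) + f_2(s)$ with $f_i(s) = P_i f(s) \in E_i$, the integrability hypothesis we shall need on $f_1$ will be seen to follow from the assumption $t \mapsto f(t)(\{(a_i,a_j)\}) \in L^1(d|g_1|)$ in combination with the identification carried out below.

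Next, I would exploit that $E_1 = \mathcal{D}_{i,j}([-\tau,0]^2)$ is one-dimensional: every element of $E_1$ is of the form $\lambda\, \delta_{(a_i,a_j)}$ with $\lambda \in \mathbb{R}$. Hence there is a scalar function $\lambda: [0,T] \to \mathbb{R}$ with
\begin{equation*}
f_1(s) = \lambda(s)\, \delta_{(a_i,a_j)},
\end{equation*}
and by definition of $g_1$ the $E_1^\ast$-pairing reduces to the scalar differential
\begin{equation*}
\prescript{}{E_1}{\langle} f_1(s), d\tilde g_1(s)\rangle_{E_1^\ast} = \lambda(s)\, dg_1(s).
\end{equation*}
Applying formula \eqref{eq: 20} from part 2 of Lemma \ref{lemma misura}, which is legitimate since $\tilde g_2 \equiv 0$, yields
\begin{equation*}
\int_0^t \prescript{}{E}{\langle} f(s), d\tilde g(s)\rangle_{E^\ast} = \int_0^t \lambda(s)\, dg_1(s).
\end{equation*}

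The final step is to identify $\lambda(s)$ with $f(s)(\{(a_i,a_j)\})$. Evaluating the measure $f_1(s) = \lambda(s)\delta_{(a_i,a_j)}$ at the singleton gives $\lambda(s) = f_1(s)(\{(a_i,a_j)\})$, and since $f_2(s) \in E_2$ and the implicit spirit of the decomposition $E = E_1 \oplus E_2$ (with $E_1$ carrying precisely the $\delta_{(a_i,a_j)}$-mass) forces $f_2(s)(\{(a_i,a_j)\}) = 0$, we obtain $\lambda(s) = f(s)(\{(a_i,a_j)\})$, which concludes \eqref{eq 5.15}.

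The main obstacle I anticipate is this last identification: the algebraic condition $E_1 \cap E_2 = \{0\}$ alone does not automatically force elements of $E_2$ to vanish on $\{(a_i,a_j)\}$ (one can build $E_2$ whose members do carry mass there, in which case the projection $P_1$ extracts a coefficient that differs from the measure value at the singleton). In all envisaged applications, $E_2$ is of the form $L^2([-\tau,0]^2) \oplus \bigoplus_{(i',j') \neq (i,j)} \mathcal{D}_{i',j'}$ or similar, in which no element charges $\{(a_i,a_j)\}$; this implicit compatibility is what makes $\lambda(s) = f(s)(\{(a_i,a_j)\})$ valid, and I would state it as a genuine standing assumption on the decomposition before concluding.
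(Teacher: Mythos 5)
Your proof follows essentially the same route as the paper's: both apply part 2 of Lemma \ref{lemma misura} to the decomposition $E = E_1 \oplus E_2$ and identify $P_1 f(s)$ with $f(s)(\{(a_i,a_j)\})\,\delta_{(a_i,a_j)}$ before reading off the scalar integral against $dg_1$. Your closing caveat is well taken --- the paper asserts this identification with a bare ``clearly,'' whereas it does implicitly require that elements of $E_2$ put no mass on $\{(a_i,a_j)\}$, a condition that holds in all of the paper's applications (compare the hypothesis on $\chi_2$ in Proposition \ref{pr PLM} and the remark following the present proposition).
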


\begin{rem} \label{lemma misura R} 
Let $g_1$ be the  real function defined in the second item
of the hypotheses.\\
Defining $\tilde g_1 : [0,T] \rightarrow E_1^\ast$
by $\tilde g_1(t) = g_1(t) \; \delta_{(a_{i}, a_{j})}$,
 by construction it follows $\tilde g_1(t)(f) =  \tilde g(t)(f)$
for every $f \in E_1, t \in [0,T]$.
Since for $a,b\in [0,T]$, with $a< b$, we have 
\[  \left\| \tilde{g}(b)-\tilde{g}(a)\right\|_{E^{\ast}} =
 \left\| \tilde{g}_{1}(b)-\tilde{g}_{1}(a)\right\|_{E_{1}^{\ast}}
= \left|  g_{1}(b)-g_{1}(a)\right|  \ ;
\]
then $g_1$ is continuous with bounded  variation
if and only if $\tilde g$ is continuous with bounded variation. 
\end{rem}
\begin{proof} [Proof of Proposition \ref{lemma misura1}]
We apply Lemma  \ref{lemma misura}.2. 
Clearly we have $P_1 (f) = f(\{a_i,a_j\})\delta_{(a_{i}, a_{j})} $.
%We define $\tilde g_1 : [0,T] \rightarrow E_1^\ast$
%by $\tilde g_1(t) = g_1(t) \delta_{a_{i}, a_{j}}$;
%obviously, by construction $\tilde g_1(t)(f) =  \tilde g(t)(f)$
%for every $f \in E_1$.
%Since for $a,b\in [0,T]$, with $a< b$, we have 
%\[
% \left\| \tilde{g}_{1}(b)-\tilde{g}_{1}(a)\right\|_{E_{1}^{\ast}}
%= \left|  g_{1}(b)-g_{1}(a)\right| 
%\]
%then $\tilde g_1$ has bounded variation.
It follows that
$$
\int_{0}^{t}\prescript{}{E}{\langle}  f(s), d\tilde{g}(s)\rangle_{E^{\ast}}=
\int_{0}^{t}  \prescript{}{E_1}
 {\langle} f(s)(\{ a_{i}, a_{j}\}) \delta_{(a_{i}, a_{j})},
 d\tilde g_{1}(s)\rangle_{E_1^\ast}. 
%=
%\int_{0}^{t}  f(s)(\{ a_{i}, a_{j}\}) 
%\prescript{}{E_1}
% {\langle} \delta_{(a_{i}, a_{j})},
% d\tilde g_{1}(s)\rangle_{E_1^\ast}.
$$
Since  $g_{1}(t)= \prescript{}{E_{1}}{\langle}\delta_{(a_{i},a_{j})},
 \tilde{g}_{1}(t)  \rangle_{E_{1}^{\ast}}$
and because of Theorem 30 in Chapter 1, paragraph 2 
of \cite{dincuvisi},
previous expression equals the right-hand side  of 
 \eqref{eq 5.15}.
\end{proof}

\begin{rem}  
Let $E$ be a Banach subspace of $\mathcal{M}([-\tau,0]^{2})$ containing $\mathcal{D}_{i,j}([-\tau,0]^{2})$. 
A typical example of application of Proposition \ref{lemma misura1} is given by $E_{1}=\mathcal{D}_{i,j}([-\tau,0]^{2})$ and 
$E_{2}=\left\{ \mu\in E \; |\;  \mu(\{a_{i},a_{j}\})=0 \right\}$. Any $\mu\in E$ can be decomposed into $\mu_{1}+\mu_{2}$, 
where $\mu_{1}=\mu(\{a_{i},a_{j}\})\delta_{(a_{i},a_{j})}$, which belongs to $E_{1}$, and $\mu_{2}\in E_{2}$. 
\end{rem}
In the proof of item 3. in proposition below we will use Proposition \ref{lemma misura1} 
considering $\tilde{g}$ as the $\chi$-covariation of two processes $X(\cdot)$ and $Y(\cdot)$.
\begin{prop}			\label{pr PLM}
Let $i,j\in \{0,\ldots,N\}$ and let $\chi_{2}$ be a Banach subspace of $\mathcal{M}([-\tau,0]^{2})$ such that $\mu(\{a_{i},a_{j}\})=0$ 
for every $\mu\in \chi_{2}$. We set $\chi=\mathcal{D}_{i,j}([-\tau,0]^{2})\oplus \chi_{2}$.\\ 
Let $X$, $Y$ be two real continuous processes such that $(X_{\cdot+a_{i}},Y_{\cdot+a_{j}})$ admits their mutual covariations and 
%Let $X$, $Y$ be two real continuous finite quadratic variation processes such that $[X_{\cdot+a_{i}},Y_{\cdot+a_{j}}]$ exists and 
such that $X(\cdot)$ and $Y(\cdot)$ admit a zero $\chi_{2}$-covariation. 
Then following properties hold.
\begin{enumerate}
%\item $[X_{\cdot+a_{i}},Y_{\cdot+a_{j}}]$ exists and the $\mathcal{D}_{i,j}([-\tau,0]^{2})$-covariation is given by 
%\[
%[X(\cdot),Y(\cdot)]:\mathcal{D}_{i,j}([-\tau,0]^{2})\longrightarrow \mathscr{C}([0,T])
%\hspace{2cm}
%\mu\mapsto \mu(\{a_{i},a_{j}\}) [X_{\cdot+a_{i}},Y_{\cdot+a_{j}}].
%\]
%\item $\chi$ is a closed subspace of $\mathcal{M}([-\tau,0]^{2})$.
\item $\chi$ is a Chi-subspace of $(B\hat{\otimes}_{\pi}B)^{\ast}$,
 with $B=C([-\tau,0])$.
\item $X(\cdot)$ and $Y(\cdot)$ admit a $\chi$-covariation of the type 
\[
[X(\cdot),Y(\cdot)]:\chi \longrightarrow \mathscr{C}([0,T]) \ ,
\hspace{2cm}
[X(\cdot),Y(\cdot)](\mu)=\mu(\{a_{i},a_{j}\})[X_{\cdot+a_{i}},Y_{\cdot+a_{j}}]			\; .
\]
\item For every $\chi$-valued process $\mathbb{Z}$ with locally bounded paths (for instance cadlag) we have
\begin{equation}		\label{eq prop 4}
\int_{0}^{\cdot} \langle \mathbb{Z}_{s}, d\widetilde{[X(\cdot),Y(\cdot)]}_{s}\rangle =
\int_{0}^{\cdot} \mathbb{Z}_{s}(\{a_{i},a_{j}\})d[X_{\cdot+a_{i}},Y_{\cdot+a_{j}}]_{s}		\; .
\end{equation} 
\end{enumerate}
\end{prop}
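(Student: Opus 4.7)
The plan is to treat the three items in order, building on the direct-sum decomposition $\chi = \mathcal{D}_{i,j}([-\tau,0]^{2})\oplus \chi_{2}$ and then invoking the measure-theoretic Proposition \ref{lemma misura1} for item 3.

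For item 1, I would first note that $\mathcal{D}_{i,j}([-\tau,0]^{2})$ and $\chi_{2}$ are both Banach subspaces of $\mathcal{M}([-\tau,0]^{2})$, which by \eqref{eq 1.15bis} is continuously injected into $(B\hat\otimes_\pi B)^\ast$. The hypothesis $\mu(\{a_{i},a_{j}\})=0$ for every $\mu \in \chi_{2}$ ensures $\mathcal{D}_{i,j}([-\tau,0]^{2})\cap \chi_{2} = \{0\}$, so the direct sum is well defined, inherits a Banach norm, and remains continuously injected in $(B\hat\otimes_\pi B)^\ast$; hence it is a Chi-subspace in the sense of Definition \ref{DefChiCOV}.

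For item 2, I would combine two computations: item 3) of Proposition \ref{pr QV123} gives the existence of the $\mathcal{D}_{i,j}([-\tau,0]^{2})$-covariation of $X(\cdot)$ and $Y(\cdot)$, equal to $\mu\mapsto \mu(\{a_{i},a_{j}\})[X_{\cdot+a_{i}},Y_{\cdot+a_{j}}]$, while by assumption $X(\cdot)$ and $Y(\cdot)$ admit a zero $\chi_{2}$-covariation. Invoking Proposition 3.18 in \cite{DGR1}, which expresses the $\chi$-covariation along a direct sum of Chi-subspaces as the sum of the component covariations, gives at once the $\chi$-covariation, with the $\chi_{2}$-component vanishing and only the $\mathcal{D}_{i,j}$-component contributing. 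This yields precisely the announced formula.

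For item 3, I would apply Proposition \ref{lemma misura1} with $E=\chi$, $E_{1}=\mathcal{D}_{i,j}([-\tau,0]^{2})$, $E_{2}=\chi_{2}$, $f=\mathbb{Z}$ and $\tilde g = \widetilde{[X(\cdot),Y(\cdot)]}$. From item 2, the restriction of $\tilde g(t)$ to $\chi_{2}$ is identically zero, and its restriction to $\mathcal{D}_{i,j}$ is identified (via the Riesz-type pairing) with the real continuous bounded variation function $g_{1}(t)=[X_{\cdot+a_{i}},Y_{\cdot+a_{j}}]_{t}$, whose continuity and bounded variation follow from the assumption that $X_{\cdot+a_{i}}$ and $Y_{\cdot+a_{j}}$ admit their mutual covariations together with Definition \ref{def cov}. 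Local boundedness of the paths of $\mathbb{Z}$ ensures $t\mapsto \mathbb{Z}_{t}(\{a_{i},a_{j}\})$ lies in $L^{1}(d|g_{1}|)$, so Proposition \ref{lemma misura1} delivers exactly \eqref{eq prop 4}.

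The main obstacle I expect is bookkeeping rather than a deep point: one must check that the measure-theoretic assumptions of Proposition \ref{lemma misura1} (continuity and bounded variation of $\tilde g_{1}$, vanishing of $\tilde g_{2}$) are genuinely inherited from the stochastic $\chi$-covariation produced in item 2, and that the pairing identification between $\mathcal{D}_{i,j}([-\tau,0]^{2})^{\ast}$ and $\mathbb R$ through $\delta_{(a_{i},a_{j})}$ is used consistently; once this identification is made rigorous the identity \eqref{eq prop 4} is immediate.
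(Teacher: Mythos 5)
Your proposal is correct and follows essentially the same route as the paper: item 2 via Proposition \ref{pr QV123} item 3) combined with Proposition 3.18 of \cite{DGR1} for the direct-sum decomposition, and item 3 via a pathwise application of Proposition \ref{lemma misura1} with $f=\mathbb{Z}(\omega)$ and $\tilde g=\widetilde{[X(\cdot),Y(\cdot)]}(\omega)$. The only cosmetic difference is in item 1, where the paper cites Propositions 3.3 and 3.4 of \cite{DGR1} for the closedness of the direct sum inside $\mathcal{M}([-\tau,0]^{2})$ rather than arguing it directly as you do.
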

\begin{proof}\
\begin{enumerate}
%\item It is a consequence of the fact that $X(\cdot)$ and $Y(\cdot)$ admit a $\mathcal{D}_{i,j}([-\tau,0]^{2})$-covariation, in particular of Condition \textbf{H2}. 
%\item It follows by Proposition 3.4 in \cite{DGR1}.
\item  By Proposition 3.4 in \cite{DGR1}, $\chi$ is a closed subspace 
of $\mathcal{M}([-\tau,0]^{2})$.
% and so  $\chi$ is  
%a Chi-subspace of $(B\hat{\otimes}_{\pi}B)^{\ast}$, with
%$B=C([-\tau,0])$.
 The claim follows by Proposition 3.3 in \cite{DGR1}.
\item We denote here $\chi_1 = \mathcal{D}_{i,j}([-\tau,0]^{2})$; 
$\chi_1$ and $\chi_{2}$ are closed subspaces of $\mathcal{M}([-\tau,0]^{2})$. 
By Proposition \ref{pr QV123}, item 3) 
$X(\cdot)$ and $Y(\cdot)$ admit a 
%$\mathcal{D}_{i,j}([-\tau,0]^{2})$
$\chi_1$-covariation. Proposition 3.18 in \cite{DGR1} % \ref{prop somma diretta di qv}
 implies that 
$X(\cdot)$ and $Y(\cdot)$ admit a $\chi$-covariation 
which can be determined from the 
% which equals the sum of $\mathcal{D}_{i,j}([-\tau,0]^{2})$
$\chi_1$-covariation and the 
$\chi_{2}$-covariation. 
More precisely, for $\mu$ in $\chi$ with decomposition $\mu_{1}+\mu_{2}$,
 $\mu_{1}\in \chi_1$
%\mathcal{D}_{i,j}([-\tau,0]^{2})$
 and $\mu_{2}\in \chi_{2}$, with a slight  
abuse of  notations, we have
\begin{eqnarray*}
%\begin{split}
[X(\cdot),Y(\cdot)](\mu) & =&
[X(\cdot),Y(\cdot)](\mu_{1})+[X(\cdot),Y(\cdot)](\mu_{2})=
%\\ &
[X(\cdot),Y(\cdot)](\mu_{1})\\
&=&
\mu_{1}(\{a_{i},a_{j}\}) [X_{\cdot+a_{i}},Y_{\cdot+a_{j}}]
=
\mu(\{a_{i},a_{j}\}) [X_{\cdot+a_{i}},Y_{\cdot+a_{j}}]  \; .
%\end{split}
\end{eqnarray*} 
\item Since both sides of \eqref{eq prop 4} are continuous processes, it is enough to show that they are equal a.s. for every fixed $t\in [0,T]$. 
This follows for almost all $\omega\in \Omega$ using Proposition 
\ref{lemma misura1}
 where $f=\mathbb{Z}(\omega)$ and $\tilde{g}=\widetilde{[X(\cdot),Y(\cdot)]}(\omega)$.
We remark that here $\tilde{g}_{1}=\widetilde{[X_{\cdot+a_{i}}(\cdot),Y_{\cdot+a_{j}}(\cdot)]}(\omega)$ and $g_1 = [X_{\cdot+a_{i}},Y_{\cdot+a_{j}}](\omega)$.
\end{enumerate}
\end{proof}
\begin{rem} \label{RPLM1}
Proposition \ref{pr PLM} will be used in the sequel especially 
in the case $a_{i}=a_{j}=0$. 
\end{rem}
\begin{rem}\label{RPLM}
Under the same assumptions as Proposition \ref{pr PLM}, 
if $\Z$ takes values in $\shd_{i,j}$, then 
\begin{equation} \label{ERRR}
\int_{0}^{\cdot} \langle \mathbb{Z}_{s},
 d\widetilde{[X(\cdot),Y(\cdot)]}_{s}\rangle =
\int_{0}^{\cdot} \mathbb{Z}_{s}(\{a_{i},a_{j}\})
d[X_{\cdot+a_{i}},Y_{\cdot+a_{j}}]_{s}		\; .
\end{equation}
In fact, the left-hand side equals
$$\int_{0}^{\cdot}  \mathbb{Z}_{s}({a_i,a_j}) \langle \delta_{a_i,a_j},
 d\widetilde{[X(\cdot),Y(\cdot)]}_{s}\rangle.$$
%\int_{0}^{\cdot} \mathbb{Z}_{s}(\{a_{i},a_{j}\})
%d[X_{\cdot+a_{i}},Y_{\cdot+a_{j}}]_{s}		\; .
That expression equals the right-hand side of \eqref{ERRR} because
of item 3) in Proposition \ref{pr QV123} and Theorem 30, Chapter 1,
par 2 of \cite{dincuvisi}.
\end{rem}
\subsection{On some generalized Fukushima decomposition}
We are ready now to show some decomposition results.
\begin{thm}  		\label{thm STWDIRP}		
Let $X$ be a real continuous $(\shf_{t})$-Dirichlet process with decomposition 
$X=M+A$, where $M$ is the $(\shf_{t})$-local martingale and $A$ is a zero quadratic variation process with $A_{0}=0$. 
Let $F:C([-\tau,0])\longrightarrow \mathbb{R}$ be a Fr\'echet differentiable function such that the range of $DF$ is $\mathcal{D}_{0}([-\tau,0])\oplus L^{2}([-\tau,0])$. 
Moreover we suppose that $DF:C([-\tau,0])\longrightarrow \mathcal{D}_{0}([-\tau,0])\oplus L^{2}([-\tau,0])$ is continuous.\\
%$DF(\eta)=\alpha_{0}(\eta)\delta_{0}+g(\eta)$ with $\alpha_{0}:C([-\tau,0])\longrightarrow \mathbb{R}$ and $g:C([-\tau,0])\longrightarrow L^{2}([-T,0])$ both continuous in their topologies. 
Then $F(X(\cdot))$ is an $(\shf_{t})$-Dirichlet process with local martingale component equal to
\begin{equation} 	\label{FL18}
\bar{M}_{\cdot}=F\big(X_{0}(\cdot)\big) + \int_{0}^{\cdot} D^{\delta_{0}}F \big(X_{s}(\cdot)\big)dM_{s}  \; ,
\end{equation}
where from Notation \ref{nota MEASURE} we recall that $D^{\delta_{0}}F (\eta)=DF(\eta)(\{0\})$.
\end{thm}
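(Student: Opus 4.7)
The plan is to identify the candidate decomposition \eqref{FL18} and then verify the two defining properties: that $\bar M$ is an $(\shf_t)$-local martingale and that $\bar A := F(X(\cdot)) - \bar M$ has zero quadratic variation (with $\bar A_0 = 0$ by construction). Since $DF$ is continuous from $C([-\tau,0])$ into $\mathcal{D}_0 \oplus L^{2}([-\tau,0])$, projecting onto the first summand yields a continuous real valued functional, so $s \mapsto D^{\delta_0} F(X_s(\cdot))$ is a continuous $(\shf_t)$-adapted process and $\bar M$ is a well defined continuous $(\shf_t)$-local martingale; by classical stochastic calculus together with Remark \ref{rem R}.2 its quadratic variation equals $[\bar M]_t = \int_0^t \bigl(D^{\delta_0} F(X_s(\cdot))\bigr)^{2} d[M]_s$.

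To compute $[F(X(\cdot))]$, I would apply Theorem \ref{thm SCQVA} with $B = C([-\tau,0])$, $\chi = \chi^{0}([-\tau,0]^{2})$, $\mathbb{X}^{1} = \mathbb{X}^{2} = X(\cdot)$ and $F^{1} = F^{2} = F$. The hypotheses are satisfied: by Proposition \ref{pr QV123}.4 the window $X(\cdot)$ admits a $\chi^{0}$-quadratic variation $\widetilde{[X(\cdot)]}$ with $[X(\cdot)](\mu) = \mu(\{0,0\})[M]$, and Example \ref{pr STCHI2} applied with the degenerate vector $a = (0)$ shows that $(\eta_{1},\eta_{2}) \mapsto DF(\eta_{1}) \otimes DF(\eta_{2})$ is a continuous map into $\chi^{0}$. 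Writing out the resulting identity and extracting the Dirac mass at $(0,0)$ via Proposition \ref{pr PLM} (since the other direct summands composing $\chi^{0}$ evaluate to zero on the singleton $\{(0,0)\}$) gives
\[
[F(X(\cdot))]_{t} = \int_0^t \bigl(D^{\delta_0} F(X_s(\cdot))\bigr)^{2} d[M]_s = [\bar M]_{t}.
\]

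A parallel application of Theorem \ref{thm SCQVA} with $F^{1} = F$, $F^{2}(\eta) = \eta(0)$ (so that $F^{2}(X_s(\cdot)) = X_s$ and $DF^{2} \equiv \delta_0 \in \mathcal{D}_0$), combined with Proposition \ref{pr PLM}, produces $[F(X(\cdot)), X]_{t} = \int_0^t D^{\delta_0} F(X_s(\cdot)) d[M]_s$. Since $[F(X(\cdot))]$ is finite and $A$ has zero quadratic variation, Remark \ref{rem R}.3 gives $[F(X(\cdot)), A] = 0$, hence $[F(X(\cdot)), M]_{t} = \int_0^t D^{\delta_0} F(X_s(\cdot)) d[M]_s$. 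Using the Kunita--Watanabe type identity $[Z, \int H\,dN] = \int H\,d[Z, N]$ (a consequence of Proposition \ref{prFD} once the increment $\bar M_{s+\e} - \bar M_s$ is approximated by $D^{\delta_0} F(X_s(\cdot)) (M_{s+\e} - M_s)$ using continuity of the integrand), one obtains $[F(X(\cdot)), \bar M]_{t} = \int_0^t \bigl(D^{\delta_0} F(X_s(\cdot))\bigr)^{2} d[M]_s = [\bar M]_{t}$. Bilinearity of the covariation then yields $[\bar A] = [F(X(\cdot))] - 2[F(X(\cdot)), \bar M] + [\bar M] = 0$, finishing the proof.

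The main obstacle, in my view, is the rigorous justification of the Kunita--Watanabe style identity in the last step: $F(X(\cdot))$ is only known a priori to have finite quadratic variation, not to be a semimartingale, so the approximation $\bar M_{s+\e} - \bar M_s \approx D^{\delta_0} F(X_s(\cdot)) (M_{s+\e} - M_s)$ has to be quantified carefully, ensuring that the remainder, once regularized by division by $\e$ and paired with $Z_{s+\e} - Z_s$, vanishes in the ucp limit. All other steps reduce to direct applications of Theorem \ref{thm SCQVA}, Proposition \ref{pr QV123} and Proposition \ref{pr PLM}.
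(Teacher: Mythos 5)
Your treatment of $[F(X(\cdot))]$ and $[\bar M]$ coincides with the paper's: both terms are computed exactly as you describe, via Proposition \ref{pr QV123}.4, Example \ref{pr STCHI2}, Theorem \ref{thm SCQVA} and Proposition \ref{pr PLM} for the first, and Remark \ref{rem R}.2 for the second. Where you diverge is the cross term. You first compute $[F(X(\cdot)),X]$ by applying Theorem \ref{thm SCQVA} to the pair $(F,\,\eta\mapsto\eta(0))$ evaluated on $(X(\cdot),X(\cdot))$, subtract $[F(X(\cdot)),A]=0$, and then try to pass from $[F(X(\cdot)),M]$ to $[F(X(\cdot)),\int \alpha_0\,dM]$ by a Kunita--Watanabe identity in which one argument, $F(X(\cdot))$, is only a finite quadratic variation process. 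The paper instead writes $\bar M=G(\bar M(\cdot))$ with $G(\eta)=\eta(0)$ and applies Theorem \ref{thm SCQVA} to the \emph{pair of window processes} $(X(\cdot),\bar M(\cdot))$ with the pair of functionals $(F,G)$; the needed $\chi^{0}$-covariation $[X(\cdot),\bar M(\cdot)](\mu)=\mu(\{0,0\})[X,\bar M]$ is supplied by Corollary \ref{corCDWMbis}, and the only Kunita--Watanabe computation occurring is $[M,\int\alpha_0\,dM]=\int\alpha_0\,d[M]$, i.e.\ between two genuine semimartingales, where it is classical. The paper's detour through $G$ thus eliminates precisely the step you single out as the main obstacle.

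That step is a real hole in your write-up as it stands: the identity $[Z,\int H\,dN]=\int H\,d[Z,N]$ for $Z$ merely of finite quadratic variation is not among the tools the paper provides, and you do not prove it. It is, however, true and fillable along the lines you sketch: writing $\bar M_{s+\e}-\bar M_s=H_s(M_{s+\e}-M_s)+\int_s^{s+\e}(H_r-H_s)\,dM_r$, the principal term converges by Proposition \ref{prFD} (since $Z$ and $M$ admit their mutual covariations), while the remainder is controlled by Cauchy--Schwarz: its contribution is bounded by $\bigl(\frac{1}{\e}\int_0^t(Z_{s+\e}-Z_s)^2ds\bigr)^{1/2}$ times a factor whose square has expectation $\frac{1}{\e}\int_0^t\E\bigl[\int_s^{s+\e}(H_r-H_s)^2d[M]_r\bigr]ds$ after localization, which vanishes by continuity of $H$. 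So your route can be completed, at the cost of proving this auxiliary lemma; the paper's route buys you its avoidance at the cost of invoking the infinite-dimensional machinery (Corollary \ref{corCDWMbis}) a second time.
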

\begin{rem}
The It\^o integral in \eqref{FL18} makes sense because $\apt D^{\delta_0} F\apt X_t(\cdot) \cpt \cpt$ is 
$(\F_t)$-adapted.
\end{rem}
\begin{proof} \
We need to show that $[\bar{A}]=0$ where $\bar{A}:=F(X(\cdot))-\bar{M}$. 
For simplicity of notations,  in this proof we will 
denote $\alpha_{0}(\eta)=D^{\delta_{0}}F(\eta)$.
%\begin{equation}		\label{eq: quadrat var A}
%[\tilde{A}]=\left[  F(X_{\cdot}(\cdot)) - \int_{0}^{\cdot}\alpha_{0} \big(X_{s}(\cdot)\big)dM_{s}  \right]
%\end{equation}
By the linearity of the covariation of real processes, we have
$[\bar{A}]=A_{1}+A_{2}-2 A_{3}$
where 
\begin{displaymath}
\begin{split}
A_{1}
&
=\left[  F(X_{\cdot}(\cdot)) \right] \\
A_{2}
&
=\left[  \int_{0}^{\cdot}\alpha_{0} \big(X_{s}(\cdot)\big)dM_{s}\right]\\
A_{3}
&
=
\left[   F(X(\cdot)) , \int_{0}^{\cdot}\alpha_{0} \big(X_{s}(\cdot)\big)dM_{s} \right]  \; .
\end{split}
\end{displaymath}
%By properties of real covariation \eqref{eq: quadrat var A} equals
%\begin{displaymath}
%\left[  F(X_{\cdot}(\cdot)) \right]  +  
%\left[  \int_{0}^{\cdot}\alpha_{0} \big(X_{s}(\cdot)\big)dM_{s}\right]  - 2 \left[   F(X_{\cdot}(\cdot)) , \int_{0}^{\cdot}\alpha_{0} \big(X_{s}(\cdot)\big)dM_{s} \right]=
%A_{1}+A_{2}-2 A_{3}
%\end{displaymath}
Since $X$ is a finite quadratic variation process, by Proposition 
\ref{pr QV123} 4), its window process $X(\cdot)$ 
admits a $\chi^{0}([-\tau,0]^{2})$-quadratic variation.
Moreover by Example \ref{pr STCHI2} and Remark \ref{rem B0H} the map $DF\otimes DF:C([-\tau,0])\times C([-\tau,0])\longrightarrow \chi^{0}([-\tau,0]^{2})$ 
is a continuous application. 
Applying Theorem \ref{thm SCQVA} and \eqref{eq prop 4} of Proposition \ref{pr PLM} we obtain
\begin{displaymath}
\begin{split}
A_{1} &%=\left[  F(X_{\cdot}(\cdot)) \right]  
= 
\int_{0}^{\cdot}\langle DF(X_{s}(\cdot))\otimes DF(X_{s}(\cdot)), d \widetilde{[X_{\cdot}(\cdot)]}_{s}\rangle=\\
&
%=
%\int_{0}^{\cdot}\langle \alpha_{0}(X_{s}(\cdot) )\delta_{0}+g(X_{s}(\cdot)) \otimes \alpha_{0}(X_{s}(\cdot) )\delta_{0}+g(X_{s}(\cdot)) , d \widetilde{[X(\cdot)]}_{s}\rangle=^{\eqref{eq prop 4}}
%\\
%&
=
\int_{0}^{\cdot} \alpha^{2}_{0}(X_{s}(\cdot) )d[X]_{s}=%\int_{0}^{\cdot} \alpha^{2}_{0}(X_{s}(\cdot) )d[M+A]_{s}=
\int_{0}^{\cdot} \alpha^{2}_{0}(X_{s}(\cdot) )d[M]_{s}		\; .
\end{split}
\end{displaymath}
The term $A_{2}$ is the quadratic variation of a local martingale; by Remark \ref{rem R} item 2. we get
\[
A_{2}=%\left[  \int_{0}^{\cdot}\alpha_{0} \big(X_{s}(\cdot)\big)dM_{s}\right]=
\int_{0}^{\cdot}\alpha^{2}_{0} (X_{s}(\cdot))d[M]_{s}   \; .
\]
It remains to prove that $A_{3}=%\left[   F(X(\cdot)) , \int_{0}^{\cdot}\alpha_{0} \big(X_{s}(\cdot)\big)dM_{s} \right] =\left[   F(X(\cdot)) , \bar{M} \right] =
\int_{0}^{\cdot}\alpha^{2}_{0} (X_{s}(\cdot))d[M]_{s} $. %, i.e. $\left[   F(X_{\cdot}(\cdot)),\bar{M}_{\cdot} \right]=0$. 
We define $G:C([-\tau,0])\longrightarrow \mathbb{R}$ by $G(\eta)=\eta(0)$. We observe that $\bar{M}=G(\bar{M}(\cdot))$ where 
$\bar{M}(\cdot)$ denotes as usual the window process associated to $\bar{M}$.
%We introduce the window process $\tilde{M}(\cdot)$ associated to $\tilde{M}$. 
%The relation between $\tilde{M}$ and $\tilde{M}(\cdot)$ is that $\tilde{M}=G(\tilde{M}(\cdot))$, 
%with $G:C([-\tau,0])\longrightarrow \mathbb{R}$, $G(\eta)=\eta(0)$. 
$G$ is Fr\'echet differentiable and $DG(\eta)=\delta_{0}$, therefore $DG$ is continuous from $C([-\tau,0])$ to $\mathcal{D}_{0}([-\tau,0])\oplus L^{2}([-\tau,0])$. 
Moreover by Example \ref{pr STCHI2} we know that $DF\otimes
DG:C([-\tau,0])\times C([-\tau,0])\longrightarrow
\chi^{0}([-\tau,0]^{2})$
 is continuous. Corollary \ref{corCDWMbis} item 2. says that 
the $\chi^{0}([-\tau,0]^{2})$-covariation between $X(\cdot)$ 
and $\bar{M}(\cdot)$ exists and it is given by 
\begin{equation}		\label{eq YUI}
[X(\cdot),\bar{M}(\cdot)](\mu)=\mu(\{0,0\})[X,\bar{M}]  \; .
\end{equation} 
We have $[X,\bar{M}]=[M,\bar{M}]+[A,\bar{M}]=[M,\bar{M}]$. 
By Remark \ref{rem R} item 2. and 
the usual properties of stochastic calculus we have 
\be \label{415b}
[X,\bar{M}]=
\left[ M,\int_{0}^{\cdot}\alpha_{0} \big(X_{s}(\cdot)\big)dM_{s} \right]=
%\left[ \int_{0}^{\cdot} dM_{s} , \int_{0}^{\cdot}a_{0} \big(X_{s}(\cdot)\big)dM_{s}\right]=
\int_{0}^{\cdot}\alpha_{0} \big(X_{s}(\cdot)\big) d[M]_{s} \ . 
\ee
Finally,  applying again Theorem \ref{thm SCQVA}, relation \eqref{eq prop 4} in 
Proposition \ref{pr PLM} and \eqref{415b} we obtain
\begin{eqnarray*}
A_{3} & =&[F(X(\cdot)), G(\tilde{M}(\cdot))]=
\int_{0}^{\cdot}\langle DF(X_{s}(\cdot))\otimes
DG(\bar{M}_{s}(\cdot)),
 d \widetilde{[X(\cdot),\bar{M}(\cdot)]}_{s}\rangle
\\
&=&
%=
%\int_{0}^{\cdot}\langle \alpha_{0}(X_{s}(\cdot) )\delta_{0}+g(X_{s}(\cdot)) \otimes \delta_{0},d\widetilde{[X(\cdot),\bar{M}(\cdot)]}_{s}\rangle=^{\eqref{eq prop 4'}}
%\\
%&
%=
\int_{0}^{\cdot} \alpha_{0}(X_{s}(\cdot) )d[X, \bar{M}]_{s}=
%\int_{0}^{\cdot} \alpha_{0}(X_{s}(\cdot) )d [M+A,\bar{M} ]_{s}=
%\\
%&
%=\int_{0}^{\cdot} \alpha_{0}(X_{s}(\cdot) )d\left[M+A, \int_{0}^{\cdot}\alpha_{0} \big(X_{s}(\cdot)\big)dM_{s} \right]_{s}=
\int_{0}^{\cdot} \alpha^{2}_{0}(X_{s}(\cdot) )d[M]_{s}  \; .
\end{eqnarray*}
The result is now established.
\end{proof}
Theorem \ref{thm STWDIRP} admits a slight generalization, in which
 will intervene  the space $\mathcal{D}_{a}$ as defined at equation
 \eqref{eq-def Da} but the final process is no longer a Dirichlet process, only a weak Dirichlet.
\begin{thm}		\label{thm STWDPR}
Let $X$ be a real continuous $(\shf_{t})$-Dirichlet process with decomposition $X=M+A$, $M$ being a local martingale and $A$ a zero quadratic variation process with $A_{0}=0$. 
Let $F:C([-\tau,0])\longrightarrow \mathbb{R}$ be a Fr\'echet
differentiable
 function such that $DF:C([-\tau,0]) \longrightarrow 
\mathcal{D}_{a}([-\tau,0])\oplus L^{2}([-\tau,0])$ is 
continuous. We have the following.
\begin{enumerate}
\item $F(X(\cdot))$ is an $(\shf_{t})$-weak Dirichlet process with decomposition 
$F(X(\cdot))=\bar{M}+\bar{A}$, where $\bar{M}$ is the local martingale defined by 
$$ \bar{M}_{\cdot}:=F\left(X_{0}(\cdot)\right)+\int_{0}^{\cdot}D^{\delta_{0}}F(X_{s}(\cdot))dM_{s}$$ 
and $\bar{A}$ is the $(\shf_{t})$-martingale orthogonal process, 
corresponding to  Definition \ref{dfn FtWD}.
% defined by $F(X(\cdot))-\tilde{M}$.\\
\item $F(X(\cdot))$ is a finite quadratic variation process and
\begin{equation}		\label{eq: Estar}
\left[F(X(\cdot)) \right]=\sum_{i=0,\ldots,N}\int_{0}^{t}
 \apt  D^{\delta_{a_{i} } }F( X_{s}(\cdot))\cpt^{2}
d[M]_{s+a_{i}}
%[M_{\cdot+a_{i}}]_{s}		\; .
\end{equation} 
 %\item $F(X(\cdot))$ is a finite quadratic variation process and
%\begin{equation}		\label{eq: Estar}
%\left[F(X(\cdot)) \right]=\sum_{i=0,\ldots,N}\int_{0}^{t}
% \apt  D^{\delta_{a_{i} } }F( X_{s}(\cdot))\cpt^{2}
%d[M]_{s+a_{i}}
%\end{equation} 
\item Process $\bar{A}$ is a finite quadratic variation process and
\begin{equation} 		\label{eq: var quad A}
[\bar{A}]_{t}=\sum_{i=1,\ldots,N}\int_{0}^{t}\apt D^{\delta_{a_{i}}}F(X_{s}(\cdot))\cpt^{2}
d[M]_{s+a_{i}}
%[M_{\cdot+a_{i}}]_{s}		\; .
\end{equation}
\item In particular
$\{ F(X_{t}(\cdot)); t\in [0,-a_{1}]\}$ is a Dirichlet process with local martingale component $\bar{M}$.
\end{enumerate}
\end{thm}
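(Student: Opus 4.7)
The plan is to derive the four items essentially in the order 2, 1, 3, 4, with item~2 serving as the master computation from which the rest follow. First I would apply Theorem~\ref{thm SCQVA} with $\X^1=\X^2=X(\cdot)$ and $F^1=F^2=F$. Since $DF$ takes values continuously in $\mathcal{D}_a\oplus L^{2}([-\tau,0])$, Example~\ref{pr STCHI2} (and Remark~\ref{rem B0H}) shows that $(x,y)\mapsto DF(x)\otimes DF(y)$ is continuous with values in the Chi-subspace $\chi^{2}([-\tau,0]^{2})$. By Corollary~\ref{corCDWM}, $X(\cdot)$ admits a $\chi^{2}$-quadratic variation supported by the diagonal Dirac atoms, namely $\widetilde{[X(\cdot)]}$ is concentrated on $\bigoplus_{i=0}^{N}\mathcal{D}_{i,i}$ and its scalar integral against $\delta_{(a_i,a_i)}$ equals $[M]_{\cdot+a_i}$. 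Applying an iterated version of Proposition~\ref{pr PLM} item~3 (one copy for each diagonal index $i$, summed by Proposition~3.18 of \cite{DGR1}) to the integrand $DF(X_s(\cdot))\otimes DF(X_s(\cdot))$ yields exactly \eqref{eq: Estar}, since $(DF\otimes DF)(\{a_i,a_i\})=(D^{\delta_{a_i}}F)^{2}$. This proves item~2.

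For item~1, let $N$ be any continuous $(\mathcal{F}_t)$-local martingale. I would write $N=G(N(\cdot))$ with $G(\eta)=\eta(0)$, so that $DG\equiv\delta_{0}$. Then $DF\otimes DG$ takes values continuously in $(\mathcal{D}_a\oplus L^{2})\hat{\otimes}_h\mathcal{D}_0=\mathcal{D}_{0,0}\oplus\chi_{2}$, which is precisely the Chi-subspace $\chi$ appearing in Corollary~\ref{corCDWMbis}. That corollary provides the $\chi$-covariation of $X(\cdot)$ and $N(\cdot)$, concentrated at $(0,0)$ with atom $[M,N]$. Theorem~\ref{thm SCQVA} combined with Remark~\ref{RPLM} therefore gives
\[
[F(X(\cdot)),N]_{\cdot}=\int_{0}^{\cdot}\!\bigl(DF(X_{s}(\cdot))\otimes\delta_{0}\bigr)(\{0,0\})\,d[M,N]_{s}=\int_{0}^{\cdot}\!D^{\delta_{0}}F(X_{s}(\cdot))\,d[M,N]_{s}.
\]
On the other hand, by the classical stochastic-calculus identity recalled in Remark~\ref{rem R}, $[\bar M,N]=\int_{0}^{\cdot}D^{\delta_{0}}F(X_{s}(\cdot))\,d[M,N]_{s}$, so the two coincide. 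Hence $[\bar A,N]=[F(X(\cdot))-\bar M,N]=0$, which is the definition of martingale orthogonality. The It\^o integral defining $\bar M$ is a local martingale because $D^{\delta_0}F(X_s(\cdot))$ is $(\mathcal{F}_s)$-adapted and continuous, giving item~1.

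For item~3, I would expand $[\bar A]=[F(X(\cdot))]+[\bar M]-2[F(X(\cdot)),\bar M]$ by bilinearity. The first term is \eqref{eq: Estar} from item~2; the second equals $\int_{0}^{\cdot}(D^{\delta_{0}}F(X_{s}(\cdot)))^{2}d[M]_{s}$; and the mixed term is computed exactly as in item~1 taking $N=\bar M$ and using $[M,\bar M]_{s}=\int_{0}^{s}D^{\delta_{0}}F(X_{r}(\cdot))\,d[M]_{r}$, yielding also $\int_{0}^{\cdot}(D^{\delta_{0}}F(X_{s}(\cdot)))^{2}d[M]_{s}$. The $i=0$ contribution in \eqref{eq: Estar} then cancels, producing \eqref{eq: var quad A}. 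Finally, item~4 follows at once: for $t\in[0,-a_{1}]$ and any $i\geq 1$ one has $s+a_{i}\leq -a_{1}+a_{i}\leq 0$ on $[0,t]$, hence $[M]_{s+a_{i}}=[M]_{0}=0$ by the convention that processes are constantly extended on $(-\infty,0]$; every summand in \eqref{eq: var quad A} vanishes, so $\bar A$ is a zero quadratic variation process on $[0,-a_{1}]$ and $F(X(\cdot))=\bar M+\bar A$ is genuinely Dirichlet there. The main technical obstacle I anticipate is the bookkeeping needed to apply Proposition~\ref{pr PLM} and Remark~\ref{RPLM} to integrands whose image in $\chi^{2}$ mixes Dirac and $L^{2}$ components; once the direct-sum decomposition is invoked (via Proposition~3.18 of \cite{DGR1}) only the diagonal Dirac components survive against $\widetilde{[X(\cdot)]}$, and the formulas collapse to the stated ones.
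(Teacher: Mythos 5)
Your proposal is correct and follows essentially the same route as the paper's proof: the same ingredients (Theorem \ref{thm SCQVA} applied via the $G(\eta)=\eta(0)$ trick, Corollaries \ref{corCDWMbis}/\ref{corCDWM}, Proposition \ref{pr PLM} and Remark \ref{RPLM}, and the bilinearity expansion of $[\bar A]$) appear in the same roles, with only a harmless reordering of items 1 and 2 and the diagonal structure of $\widetilde{[X(\cdot)]}$ quoted from Corollary \ref{corCDWM} rather than re-derived inline via Proposition \ref{prop STRP}. Your explicit justification of item 4 (each $s+a_i\leq 0$ for $i\geq 1$ on $[0,-a_1]$, so every summand of \eqref{eq: var quad A} vanishes) is slightly more detailed than the paper's, which simply calls it an easy consequence.
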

\begin{proof} \  In this proof $\alpha_{i}(\eta)$ will denote 
$D^{\delta_{a_{i}}}F(\eta)=DF(\eta)(\{a_{i}\})$ if $\eta \in C([-\tau,0])$. 
%, $DF(\eta)=\sum_{i=0}^{N}\alpha_{i}(\eta)\delta_{a_{i}}+g(\eta)$ with 
%$\alpha_{i}:C([-\tau,0])\longrightarrow \mathbb{R}$ and $g:C([-\tau,0])\longrightarrow L^{2}([-\tau,0])$ are 
%continuous in their topologies for all $i=0,\ldots,N$ and $a_{N}=-\tau<\ldots<a_{i}<\ldots<a_{0}=0$.  
\begin{enumerate}
\item 
To show that $F(X(\cdot))$ is an $(\shf_{t})$-weak Dirichlet process we need to show that 
$[F(X(\cdot))-\int_{0}^{\cdot}\alpha_{0} \big(X_{s}(\cdot)\big)dM_{s}, N]$ is zero for every $(\mathcal{F}_{t})$-continuous local martingale $N$. 
Again we set $G:C([-\tau,0])\longrightarrow \mathbb{R}$ by $G(\eta)=\eta(0)$. It holds $N_{t}=G(N_{t}(\cdot))$. 
We remark that function $G$ is Fr\'echet differentiable with
$DG:C([-\tau,0])\longrightarrow \mathcal{D}_{0}([-\tau,0])$ continuous
and 
$DG(\eta) \equiv \delta_{0}$.\\
In view of applying Corollary   \ref{corCDWMbis}, we set
 $\chi:=\shd_{0,0}\oplus \chi_2$ where $\chi_2=
\oplus_{i=1}^{N}\shd_{i, 0}
\oplus \apt L^{2}([-\tau,0])\hat{\otimes}_h \shd_{0}\cpt$. 
In particular for every $\mu \in \chi_2$ we have
$\mu(\{0,0\}) = 0$. 
 $X(\cdot)$ and $N(\cdot)$ admit a  $\chi$-covariation
 by Corollary \ref{corCDWMbis} 3.
On the other hand
$DF\otimes DG:C([-\tau,0])\times C([-\tau,0]) \longrightarrow \chi$
and it is a continuous map. 
% $X(\cdot)$ and $N(\cdot)$ admit a  $\chi$-covariation.
%; by Corollary \ref{corCDWMbis} 2.,
% $X(\cdot)$ and $N(\cdot)$ admit a zero $\chi_2$-covariation.\\
By Theorem \ref{thm SCQVA}   we have
\begin{equation}	\label{eq G2bis}
\left[F(X(\cdot)),N\right]_{t}=\left[ F(X(\cdot)),
  G(N(\cdot))\right]_{t}
=\int_{0}^{t}\langle DF(X_{s}(\cdot))\otimes \delta_0, 
% DG(N_{s}(\cdot)), 
d\widetilde{[X(\cdot),N(\cdot)]}_{s}\rangle \ .
\ee
By \eqref{eq prop 4} in Proposition \ref{pr PLM} 
%again item 1. of Corollary \ref{corCDWM}, 
it follows that
\begin{eqnarray}	\label{eq G2}
\left[F(X(\cdot)),N\right]_{t} &=&
 % \eqref{eq: I111} equals
%\begin{equation}
\int_{0}^{t} (D^{\delta_{0}}F(s,X_{s}(\cdot))\otimes \delta_0)
(\{0,0\})  d[X,N]_{s} \nonumber
\\
&&\\
 &=& 
\int_{0}^{t}D^{\delta_{0}}F(s,X_{s}(\cdot))d[M,N]_{s}  \ . \nonumber
\end{eqnarray}
By Remark \ref{rem R} item 2. and 
usual properties of stochastic calculus, it yields
\[
\left[ \int_{0}^{\cdot}\alpha_{0} \big(X_{s}(\cdot)\big)dM_{s}, N \right]_{t}=\int_{0}^{t}\alpha_{0} \big(X_{s}(\cdot)\big)d[M,N]_{s}
\]
and the result follows.\\
%xxxxxxxxx
\item By Example \ref{pr STCHI2} we know that 
$DF\otimes DF:C([-\tau,0])\times C([-\tau,0])\longrightarrow 
\chi^{2}([-\tau,0]^{2})$ and 
it is a linear continuous map. 
We decompose
$ DF(\eta) = \sum_{i=1}^N \alpha_i(\eta) \delta_{a_i} + g(\eta)$
where $g: C([-\tau,0] \rightarrow L^2([-\tau,0])$ 
so that 
\begin{equation*}
\begin{split}
 DF(\eta) \otimes DF(\eta) &= \sum_{i,j=0}^N \alpha_i(\eta)
\alpha_j(\eta) \delta_{a_i}\otimes \delta_{ a_j}
+   \sum_{i=0}^N \alpha_i(\eta)   \delta_{a_i} \otimes g(\eta) 
+  \sum_{j=0}^N \alpha_j(\eta)  g(\eta) \otimes  \delta_{a_j}\\
&+ g(\eta) \otimes g(\eta). 
\end{split}
\end{equation*}
Applying Theorem \ref{thm SCQVA}, relation \eqref{eq prop 4} in 
Proposition \ref{pr PLM} and obvious bilinearity arguments,
%and Corollary \ref{corCDWM} item 1.
 we obtain 
\be \label{EPLM}
\begin{split}
[F(X(\cdot))]_{t} & =\int_{0}^{t}\langle DF(X_{s}(\cdot))
\otimes DF(X_{s}(\cdot)), d\widetilde{[X_{s}(\cdot)]}\rangle \\
&
=
\sum_{i,j=0}^N \int_0^t \langle  \Z^{i,j}_s , d\widetilde{[X_{s}(\cdot)]}\rangle 
+ \int_0^t \langle \Z_s , d\widetilde{[X_{s}(\cdot)]}\rangle 
\end{split}
\ee
where 
\be  
\begin{split}
  \Z_s^{i,j} &=\alpha_i(X_s(\cdot))  \alpha_j(X_s(\cdot)) \ \delta_{a_i}\otimes \delta_{a_j} \\
\Z_s &= DF(X_s(\cdot)) \otimes DF(X_s(\cdot)) - \sum_{i,j=0}^N \Z_s^{i,j}.
\end{split}
\ee
A wise application of Proposition \ref{pr PLM} and Remark \ref{RPLM}
show that \eqref{EPLM} equals
$$ 
 \sum_{i,j=0\ldots,N} \int_0^t  \alpha_{i}(X_{s}(\cdot))
\alpha_{j}(X_{s}(\cdot))d[X_{\cdot+a_{i}},X_{\cdot+a_{j}}]_{s} =
%\\
%&
%=\sum_{i,j=0,\ldots,N}\int_{0}^{t}\alpha_{i}(X_{s}(\cdot))\alpha_{j}(X_{s}(\cdot))d[M_{\cdot+a_{i}},M_{\cdot+a_{j}}]_{s}=
%\\
%&
\sum_{i=0,\ldots,N}\int_{0}^{t}\alpha^{2}_{i}(X_{s}(\cdot))
d[M]_{s+a_{i}}. $$
%\\
%\end{split}
%\end{displaymath}
The last equality is a consequence of
Proposition \ref{prop STRP} and of the definition of 
weak Dirichlet process.
Finally \eqref{eq: Estar} is proved.
\item By bilinearity of the covariation of real processes we have 
$[\bar{A}]=[F(X(\cdot))]+[\bar{M}]-2 [F(X(\cdot)),\bar{M}]$.
 The first bracket is equal to \eqref{eq: Estar}
and the second term gives
\begin{displaymath}
\left[ \int_{0}^{\cdot}\alpha_{0} \big(X_{s}(\cdot)\big)dM_{s} \right] =\int_{0}^{t} \alpha_{0}^{2}(X_{s}(\cdot))d[M]_{s}  \; .
\end{displaymath}
Setting $N_{t}=\int_{0}^{t}\alpha_{0}(X_{s}(\cdot))dM_{s}$, \eqref{eq G2} gives
%And finally the covariation term is, by the existence of the $\chi^{2}$-covariation,
\begin{displaymath}
\left[ F(X(\cdot) ), \int_{0}^{\cdot}  \alpha_{0} \big(X_{s}(\cdot)\big)dM_{s} \right]%=
%\int_{0}^{t}\alpha_{0}(X_{s}(\cdot))d\left[X,\tilde{M}\right]
%=\int_{0}^{t}\alpha_{0}(X_{s}(\cdot))d\left[M+A, \int_{0}^{\cdot}\alpha_{0} \big(X_{s}(\cdot)\big)dM_{s}\right]=
%\end{displaymath}
%\begin{displaymath}
=\int_{0}^{t}\alpha^{2}_{0}(X_{s}(\cdot))d[M]_{s}
\end{displaymath}
and \eqref{eq: var quad A} follows.
\item It is an easy consequence of \eqref{eq: var quad A} since $(\bar{A}_{t})_{t\in [0,-a_{1}[}$ is a zero quadratic variation process.% if $t\in [0,-a_{1}]$.
\end{enumerate}
\end{proof}
\begin{rem}
\begin{enumerate}
\item
Theorem \ref{thm STWDPR} gives a class of examples of $(\shf_{t})$-weak Dirichlet processes with finite quadratic variation which are not necessarily 
$(\shf_{t})$-Dirichlet processes.
\item 
An example of $F:C([-\tau,0])\longrightarrow \mathbb{R}$ Fr\'echet differentiable such that $DF:C([-\tau,0]) \longrightarrow \mathcal{D}_{a}([-\tau,0])\oplus L^{2}([-\tau,0])$ 
continuously 
is, for instance, $F(\eta)=f\apt \eta(a_0),\ldots, \eta(a_N) \cpt$, with $f \in C^{1}(\R^N)$. We have $DF(\eta)=\sum_{i=0}^{N}\partial_i f \apt \eta(a_0),\ldots, \eta(a_N) \cpt \delta_{a_{i}}$.
\item Let $a\in [-\tau,0[$ and $W$ be a classical $(\shf_{t})$-Brownian motion, process $X$ defined as $X_{t}:=W_{t+a}$ is an $(\shf_{t})$-weak Dirichlet process that is not 
$(\shf_{t})$-Dirichlet.\\ 
This follows from Theorem \ref{thm STWDPR}, point 2. and 3. taking $F(\eta)=\eta(a)$. 
In particular point 3. implies that the quadratic variation of the martingale orthogonal process is $[\bar{A}]_{t}=(t+a)^+$.
This result was also proved directly in Proposition 4.11 in \cite{crnsm2}.
\end{enumerate}
\end{rem}
We now go on with a $C^1$ transformation of window of weak Dirichlet processes. 
\begin{thm}  			\label{thm: thm 1}
Let $X$ be an $(\shf_{t})$-weak Dirichlet process with finite quadratic variation where $M$ is the local 
martingale part. Let $F:[0,T]\times C([-\tau,0])\longrightarrow \R$ continuous. 
We suppose moreover that $(t,\eta)\mapsto D F(t,\eta)$ exists with values in $\shd_{a}([-\tau,0])\oplus L^{2}([-\tau,0])$ and 
$DF:[0,T]\times C([-\tau,0])\longrightarrow \shd_{a}([-\tau,0])\oplus L^{2}([-\tau,0])$ is continuous.\\
Then $\apt F\apt t,X_{t}(\cdot) \cpt \cpt_{t\in [0,T]}$ %		$\big( F(t,X_{t}(\cdot) )\big) _{t\in [0,T]}$ 
is an $(\shf_{t})$-weak Dirichlet process with martingale part 
\begin{equation}		\label{eq: mart part uC01}
\bar{M}^{F}_{t}:=F(0,X_{0}(\cdot))+\int_{0}^{t} D^{\delta_{0}} F(s, X_{s}(\cdot)) dM_{s}  \; .
\end{equation}
%where $D^{\delta_{0}} F(s, X_{s}(\cdot))$ denotes $D F(s, X_{s}(\cdot))(\{ 0\})$, i.e. the Dirac zero component of the first derivative of $F$ calculated in $(s,X_{s}(\cdot))$.
\end{thm}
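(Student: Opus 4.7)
The goal is to establish that $\bar{A}_t := F(t, X_t(\cdot)) - \bar{M}^F_t$ is an $(\shf_t)$-martingale orthogonal process, given that $\bar{M}^F$ is already an $(\shf_t)$-local martingale by construction. Equivalently, I need to show that for every continuous $(\shf_t)$-local martingale $N$,
$$[F(\cdot, X_\cdot(\cdot)), N]_t = \int_0^t D^{\delta_0}F(s, X_s(\cdot))\, d[M,N]_s,$$
since the right-hand side equals $[\bar{M}^F, N]_t$ by elementary It\^o calculus. To compute the covariation on the left, the plan is to decompose, for $\epsilon > 0$,
$$F(s+\epsilon, X_{s+\epsilon}(\cdot)) - F(s, X_s(\cdot)) = R(s,\epsilon) + S(s,\epsilon),$$
with $R(s,\epsilon) := F(s+\epsilon, X_s(\cdot)) - F(s, X_s(\cdot))$ (time-increment at the frozen $\F_s$-measurable point $X_s(\cdot)$) and $S(s,\epsilon) := F(s+\epsilon, X_{s+\epsilon}(\cdot)) - F(s+\epsilon, X_s(\cdot))$ (space-increment at frozen later time). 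The approximating covariation $\frac{1}{\epsilon}\int_0^t(\cdots)(N_{s+\epsilon} - N_s)\,ds$ then splits as $I^R(\epsilon) + I^S(\epsilon)$.

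For $I^R(\epsilon)$, writing $N_{s+\epsilon} - N_s = \int_0^{T+\epsilon}\mathbf{1}_{[s, s+\epsilon]}(v)\,dN_v$ and invoking stochastic Fubini (legitimate because $R(s,\epsilon)$ is $\F_s$-measurable) gives
$$I^R(\epsilon) = \int_0^{t+\epsilon}\phi^{\epsilon,t}_v\, dN_v, \qquad \phi^{\epsilon,t}_v := \frac{1}{\epsilon}\int_{(v-\epsilon)^+}^{v\wedge t} R(s,\epsilon)\,ds.$$
Since the image of $s\mapsto(s, X_s(\cdot))$ is a.s. compact in $[0,T]\times C([-\tau,0])$, continuity of $F$ on that compact yields $\sup_s |R(s,\epsilon)|\to 0$, hence $\sup_v |\phi^{\epsilon,t}_v|\to 0$ a.s.; It\^o's isometry (after localization by $[N]$) then forces $I^R(\epsilon)\to 0$ ucp in $t$.

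For $I^S(\epsilon)$, a first-order Taylor expansion of $F(s+\epsilon, \cdot)$ gives
$$S(s,\epsilon) = \int_0^1 \langle DF(s+\epsilon, X_s(\cdot) + \alpha(X_{s+\epsilon}(\cdot) - X_s(\cdot))), X_{s+\epsilon}(\cdot) - X_s(\cdot)\rangle\, d\alpha,$$
and rewriting $N_{s+\epsilon} - N_s = \langle \delta_0, N_{s+\epsilon}(\cdot) - N_s(\cdot)\rangle$ recasts $I^S(\epsilon)$ as an $\epsilon$-approximation of the $\chi$-covariation of $X(\cdot)$ and $N(\cdot)$ tested against the tensor $DF(s+\epsilon, \cdots)\otimes\delta_0$. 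By Example \ref{pr STCHI2} and the range hypothesis on $DF$, this tensor takes values in the space $\chi:=\shd_{0,0}\oplus\chi_2$ of Corollary \ref{corCDWMbis}. Writing the integrand as the leading term $DF(s, X_s(\cdot))\otimes\delta_0$ plus a remainder, the remainder is controlled in $\chi$-norm by a modulus of continuity of $DF\otimes\delta_0$ on compacts times $\|(X_{s+\epsilon}(\cdot)-X_s(\cdot))\otimes(N_{s+\epsilon}(\cdot)-N_s(\cdot))\|_{\chi^*}/\epsilon$, which is dominated via condition \textbf{H1} of Definition \ref{def CHICOV} exactly as in the treatment of $A_2(\epsilon),A_3(\epsilon)$ in the proof of Theorem \ref{thm SCQVA}. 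Proposition \ref{pr CONVCCOV} applied to the leading term, together with \eqref{eq prop 4} of Proposition \ref{pr PLM} and Corollary \ref{corCDWMbis}, then yields
$$I^S(\epsilon) \xrightarrow{\text{ucp}} \int_0^t \langle DF(s,X_s(\cdot))\otimes\delta_0, d\widetilde{[X(\cdot),N(\cdot)]}_s\rangle = \int_0^t D^{\delta_0}F(s,X_s(\cdot))\, d[M,N]_s,$$
closing the argument.

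The main difficulty, relative to the stationary case of Theorem \ref{thm STWDPR}, is handling the time-dependence of $F$: one must decouple the genuine time fluctuations from the space fluctuations, and the stochastic-Fubini step for $I^R(\epsilon)$ is the decisive novelty that annihilates the former without needing any differentiability of $F$ in time.
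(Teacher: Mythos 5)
Your proposal is correct and follows essentially the same route as the paper's proof: the same splitting into a time-increment term (your $I^R$, the paper's $I_2$) killed by stochastic Fubini plus uniform continuity of $F$ on the compact image of $s\mapsto (s,X_s(\cdot))$, and a space-increment term (your $I^S$, the paper's $I_1$) handled by Taylor expansion, the tensor $DF\otimes\delta_0$ with values in $\chi=\shd_{0,0}\oplus\chi_2$, condition \textbf{H1}, Proposition \ref{pr CONVCCOV}, Proposition \ref{pr PLM} and Corollary \ref{corCDWMbis}. The only cosmetic difference is that you invoke the It\^o isometry after localization where the paper cites Proposition 2.26, Chapter 3 of \cite{ks}, which is the same convergence criterion.
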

\begin{proof} \
In this proof we will denote real processes $\bar{M}^{F}$ simply by $\bar{M}$ and $\chi$ will denote the following Chi-subspace
$
\chi:=\apt\mathcal{D}_{a}([-\tau,0]) \oplus L^{2}([-\tau,0]) 
\cpt  \hat{\otimes}_{h}\shd_{0}([-\tau,0])
$.
We need to show that for any $(\shf_{t})$-continuous local martingale $N$
\begin{equation}		\label{eq: da dimostrare}
\left[ F(\cdot, X(\cdot))-  \bar{M} , N \right] \equiv 0.
%\quad a.s.
\end{equation}
Since the covariation of semimartingales coincides with the 
classical covariation, see Remark \ref{rem R} item 2., it follows
\begin{equation}		\label{eq: covariation}
\left[  \bar{M}, N \right]_{t}=\int_{0}^{t}  D^{\delta_{0}} F(s, X_{s}(\cdot)) d[M,N]_{s}  \; .
\end{equation}
It remains to check that, for every $t\in [0,T]$,
\[
\left[  F(\cdot, X(\cdot)), N \right]_{t}=\int_{0}^{t}  D^{\delta_{0}} F(s, X_{s}(\cdot)) d[M,N]_{s}  \; .
\]
For this, for fixed $t \in [0,T]$, we will evaluate the  limit in
 probability of 
\begin{equation}		\label{eq ZERO}
\int_{0}^{t} \Big(   F(s+\epsilon, X_{s+\epsilon}(\cdot))- F(s, X_{s}(\cdot)) \Big) \frac{N_{s+\epsilon}-N_{s}}{\epsilon} ds
\end{equation}
if it exists. \eqref{eq ZERO} can be written as the sum of the two terms
\[
\begin{split}
I_{1}(t,\epsilon)& =\int_{0}^{t} \Big(   F(s+\epsilon, X_{s+\epsilon}(\cdot))- F(s+\epsilon, X_{s}(\cdot)) \Big) \frac{N_{s+\epsilon}-N_{s}}{\epsilon} ds \, ,\\
I_{2}(t,\epsilon)&= \int_{0}^{t} \Big(   F(s+\epsilon, X_{s}(\cdot))- F(s, X_{s}(\cdot)) \Big) \frac{N_{s+\epsilon}-N_{s}}{\epsilon} ds  \; .
\end{split}
\]
First we prove that $I_{1}(t,\epsilon)$ converges to $\int_{0}^{t}  D^{\delta_{0}} F(s, X_{s}(\cdot)) d[M,N]_{s}$.\\ 
If $G:C([-\tau,0])\rightarrow \R$ is the function $G(\eta)=\eta(0)$, then 
$G$ is of class $C^{1}$ and $DG(\eta)=\delta_{0}$ for all $\eta\in C([-\tau,0])$ 
so that $DG:C([-\tau,0])\longrightarrow \shd_{0}([-\tau,0])$ is continuous. 
In particular it holds the equality $\eta(0)=G(\eta(\cdot))=\langle \delta_{0},\eta\rangle$. 
We express
\begin{equation}		\label{eq: I1}
\begin{split}
I_{1}(t,\epsilon) 
&
=
\int_{0}^{t} \langle DF(s+\epsilon, X_{s}(\cdot)), (X_{s+\epsilon}(\cdot)-X_{s}(\cdot))\rangle \frac{N_{s+\epsilon}-N_{s}}{\epsilon} ds +R_{1}(t,\epsilon)
\\
&
=\int_{0}^{t} \langle DF(s+\epsilon, X_{s}(\cdot)), (X_{s+\epsilon}(\cdot)-X_{s}(\cdot))\rangle \frac{\langle\delta_{0}, N_{s+\epsilon}(\cdot) - N_{s}(\cdot)\rangle }{\epsilon} ds +R_{1}(t,\epsilon),
\end{split}
\end{equation}
and
\be		\label{eq 4.25bis}
\begin{split}
R_{1}(t,\epsilon) 
%& = \int_{0}^{t} \left[ \int_{0}^{1}   \langle DF\big(s+\epsilon, (1-\alpha)X_{s}(\cdot) +\alpha X_{s}(\cdot) \big)- DF\big(s+\epsilon,X_{s}(\cdot)\big), \big(X_{s+\epsilon}(\cdot)-X_{s}(\cdot) \big)  \rangle d\alpha \right] \times \\
%& \hspace{5cm} \times  \frac { N_{s+\epsilon}-N_{s} }{\epsilon} ds =
%\\
& = \int_{0}^{t} \left[ \int_{0}^{1}   \langle DF\big(s+\epsilon,
 (1-\alpha)X_{s}(\cdot) +\alpha X_{s+\e}(\cdot) \big)-
 DF\big(s+\epsilon,X_{s}(\cdot)\big),
 \big(X_{s+\epsilon}(\cdot)-X_{s}(\cdot) \big)  \rangle d\alpha
 \right] \times
\\
& \hspace{9.5cm} \times  \frac{\langle \delta_{0}, N_{s+\epsilon}
(\cdot)-N_{s}(\cdot)\rangle}{\epsilon} ds =
\\
&
=
\int_{0}^{t}  \int_{0}^{1}  
\langle DF\big(s+\epsilon, (1-\alpha)X_{s}(\cdot) +\alpha X_{s+\e}(\cdot) \big)\otimes \delta_{0}- DF\big(s+\epsilon,X_{s}(\cdot)\big)\otimes \delta_{0} \ , \\
&  \hspace{7cm}
 \frac{   \big(X_{s+\epsilon}(\cdot)-X_{s}(\cdot) \big)  \otimes  \big(N_{s+\epsilon}(\cdot)-N_{s}(\cdot) \big)  }{\epsilon} \rangle d\alpha \, ds   \; .
\end{split}
\ee
We will show that $R_1(\cdot, \varepsilon)$ converges ucp to zero,
when $\varepsilon \rightarrow 0$.
Since $\chi$ is a Hilbert
 space, making the proper Riesz identification 
for $t\in [0,T]$, $\eta_{1},\eta_{2}\in C([-\tau,0])$ the map $DF(t,\eta_{1})\otimes DG(\eta_{2})$ coincides with 
the tensor product $DF(t,\eta_{1})\otimes \delta_{0}$, see Proposition \ref{prop RRCBLbis}. 
As in Example \ref{pr STCHI2} the map $DF\otimes \delta_{0}:[0,T]\times C([-\tau,0])$ takes values in the separable space 
$\chi^{2}([-\tau,0]^{2})$ and 
it is a continuous map. In particular it takes values in $\chi$ which is a Hilbert subspace of $\chi^{2}([-\tau,0]^{2})$.\\
%We define function  $DF(\cdot,\cdot)\otimes \delta_{0}:[0,T]\times C([-\tau,0])\rightarrow (\shd_{0}\oplus L^{2}([-\tau,0]))\hat{\otimes}_{h}^{2}$ by 
%$DF(s,\eta)\otimes \delta_{0}$ and we observe that it is continuous in $(\shd_{0}\oplus L^{2}([-\tau,0]))\hat{\otimes}_{h}^{2}$. 
We denote by $\mathcal{U}=\mathcal{U}(\omega)$ the closed convex hull of the compact subset $\mathcal{V}$ of $C([-\tau,0])$ defined, for every $\omega$, by 
\be	\label{eq 4.25ter}
\mathcal{V}=\mathcal{V}(\omega):=\{X_{t}(\omega); \,t\in [0,T] \}		\; .
\ee
According to Theorem 5.35 from \cite{InfDimAn}, $\mathcal{U}(\omega)=\overline{conv(\mathcal{V})(\omega)}$ is compact, 
so the function $DF(\cdot,\cdot)\otimes\delta_{0}$ on $[0,T]\times \mathcal{U}$ is uniformly continuous and we denote 
by $\varpi^{[0,T]\times \mathcal{U}}_{DF(\cdot,\cdot)\otimes \delta_{0}}$ the continuity modulus of the application 
$DF(\cdot,\cdot)\otimes \delta_{0}$ restricted to $[0,T]\times \mathcal{U}$ and by $ \varpi_{X}$ the continuity modulus of the continuous process $X$.  
$\varpi^{[0,T]\times \mathcal{U}}_{DF(\cdot,\cdot)\otimes \delta_{0}}$ is, as usual, 
a positive, increasing function on $\R^{+}$ converging to zero when the argument converges to zero. So we have   
\begin{equation}		\label{eq: conv R1}
\sup_{t\in[0,T]} \vert R_{1}(t,\epsilon)\vert \leq \int_{0}^{T} \varpi^{[0,T]\times \mathcal{U}}_{DF(\cdot,\cdot)\otimes \delta_{0}}\left( \varpi_{X}(\epsilon) \right) \left\| \frac{   \big(X_{s+\epsilon}(\cdot)-X_{s}(\cdot) \big)  \otimes  \big(N_{s+\epsilon}(\cdot)-N_{s}(\cdot) \big)  }{\epsilon}\right\|_{\chi}ds  \; .
\end{equation}
We recall by Corollary \ref{corCDWMbis}, item 3. that $X(\cdot)$ and $N(\cdot)$ admit a $\chi$-covariation. 
In particular using condition {\bf H1}  and \eqref{eq: conv R1} the
claim
 $R_{1}(\cdot,\epsilon)\xrightarrow [\epsilon\rightarrow 0]{ucp }0$ follows.\\
On the other hand, the first addend in \eqref{eq: I1} can be rewritten as 
\begin{equation}		\label{eq: I11}
\int_{0}^{t} \langle DF\big(s, X_{s}(\cdot) \big)\otimes \delta_{0},  \frac{   \big(X_{s+\epsilon}(\cdot)-X_{s}(\cdot) \big)  \otimes  \big(N_{s+\epsilon}(\cdot)-N_{s}(\cdot) \big)  }{\epsilon} \rangle ds+ R_{2}(t,\epsilon)
\end{equation}
where $R_{2}(\cdot,\epsilon)\xrightarrow [\epsilon\rightarrow 0]{ucp }0$ arguing similarly as for $R_{1}(t,\epsilon)$.\\
In view of the application of Proposition \ref{pr CONVCCOV}, since 
$D F\otimes \delta_{0}:[0,T]\times C([-\tau,0])\longrightarrow \chi$
is continuous, we observe that the process 
$H_{s}= DF\big(s,X_{s}(\cdot)\big)\otimes \delta_{0}$ takes
 obviously values in the separable space $\chi$ which is a
 closed subspace of $\chi^{2}([-\tau,0]^{2})$.
Using bilinearity and Proposition \ref{pr CONVCCOV}, the integral 
in \eqref{eq: I11} converges then in probability 
%ucp 
to
\begin{equation}  \label{eq: I111}
\int_{0}^{t} \langle DF\big(s,X_{s}(\cdot)\big)\otimes \delta_{0}, d\widetilde {[X(\cdot),N(\cdot)]}_{s}\rangle  \; .
\end{equation}
As in Theorem  
%\ref{corCDWMbis} 
\ref{thm STWDIRP}, item 1.,   we decompose $\chi$ in the following 
direct sum $\shd_{0,0}\oplus \chi_2$ where we recall that 
$\chi_2= \oplus_{i=1}^{N}\shd_{i, 0}\oplus \apt L^{2}([-\tau,0])\hat{\otimes}_h \shd_{0}\cpt$.
%In particular for every $\mu \in \chi_2$ we have $\mu(\{0,0\}) = 0$;
 By Corollary \ref{corCDWMbis} 2.,
 $X(\cdot)$ and $N(\cdot)$ admit a zero $\chi_2$-covariation.
By \eqref{eq prop 4} in Proposition \ref{pr PLM} 
%again item 1. of Corollary \ref{corCDWM}, 
it follows that \eqref{eq: I111} equals
\begin{equation}
%\begin{split}
%\int_{0}^{t}D^{\delta_{0}}F(s,X_{s}(\cdot)) d[X,N]_{s}&
%=\sum_{i=0}^{N}\int_{0}^{t}D^{\delta_{a_i}}F(s,X_{s}(\cdot))d[X_{\cdot+a_i},N]_{s} \\
%&
%=
\int_{0}^{t} (D^{\delta_{0}}F(s,X_{s}(\cdot))\otimes \delta_0)
(\{0,0\})  d[X,N]_{s} = 
\int_{0}^{t}D^{\delta_{0}}F(s,X_{s}(\cdot))d[M,N]_{s}  \ .
%\end{split}
\end{equation}
%because $A_{\cdot+a_i}$ is $(\F_t)$-strongly predictable, see Proposition \ref{prop STRP} and because $A$ is an
%$(\F_t)$-martingale orthogonal process.\\
We will  show now that $I_{2}(\cdot,\epsilon)\xrightarrow [\epsilon\rightarrow 0]{ucp }0$.\\
By stochastic Fubini's theorem we obtain
\[
I_{2}(t,\epsilon)
=
\int_{0}^{t}\xi(\epsilon,r ) dN_{r} 
\]
where
\[
\xi(\epsilon,r)= \frac{1}{\epsilon} \int_{0\vee(r-\epsilon)}^{r} 
\left[ F(s+\epsilon, X_{s}(\cdot))- F(s, X_{s}(\cdot))  \right] ds		\; .
\]
Proposition  2.26, chapter 3 of \cite{ks} says that $I_{2}(\cdot, \epsilon)\xrightarrow [\epsilon\rightarrow 0]{ucp }0$
if 
\begin{equation}		\label{eq EPXE}
\int_{0}^{T}\xi^{2}(\epsilon,r)d[N]_{r}\xrightarrow 
[\epsilon\rightarrow 0]{ }0			
\end{equation}
in probability.
We fix $\omega\in \Omega$ and we show that 
the convergence in \eqref{eq EPXE} holds in particular a.s.
We denote by $\varpi^{[0,T]\times \mathcal{U}}_{F}$ the continuity modulus of the application 
$F$ restricted to the compact set $[0,T]\times \mathcal{U}$. 
For every $r\in [0,T]$ we have 
\[
\left| \xi(\epsilon, r)
\right |   
\leq
\sup_{r\in [0,T]} \left| F(r+\epsilon,X_{r}(\cdot))-F(r,X_{r}(\cdot)) \right| \leq \varpi^{[0,T]\times \mathcal{U}}_{F}(\epsilon) 
\]
which converges to zero for $\epsilon$ going to zero 
since function $F$ on $[0,T]\times \mathcal{U}$ is uniformly continuous on the compact set and 
$\varpi^{[0,T]\times \mathcal{U}}_{F}$ is, as usual, 
a positive, increasing function on $\R^{+}$ converging to zero when the argument converges to zero. By Lebesgue's dominated 
convergence theorem we finally obtain \eqref{eq EPXE}.
%
% DMOSTRAZIONE CHE $I_{2}(\cdot,\epsilon)\xrightarrow [\epsilon\rightarrow 0]{ucp }0$ by localization (penso sbagliata)\\
%$\da$
%By using suitable localization theorems (e.g. as usually done for instance in \cite{ry2} , section IV.1), 
%it is enough to suppose $F$ to be with compact support, which will be $[0,T]\times \mathcal{U}$ 
%even if depends on $\omega\in \Omega$, and $N$ to a be a square integrable martingale. Then we evaluate
%\begin{equation}		\label{eq attesa quadr}
%\mathbb{E}\left[  \sup_{0\leq t\leq T} \vert I_{2}(t, \epsilon) \vert^{2} \right]
%\end{equation}
%Now we have, exchanging integrals, 
%\[
%\begin{split}
%I_{2}(t,\epsilon)&
%=
%\int_{0}^{t} \Big(   F(s+\epsilon, X_{s}(\cdot))- F(s, X_{s}(\cdot)) \Big) \frac{N_{s+\epsilon}-N_{s}}{\epsilon} ds=
%\\
%&
%=
%\int_{0}^{t}
% \Big(   F(s+\epsilon, X_{s}(\cdot))- F(s, X_{s}(\cdot)) \Big) \int_{s}^{s+\epsilon} \frac{1}{\epsilon}d N_{r}\; ds=
% \\
% &
% =
% \int_{0}^{t}    \frac{1}{\epsilon} \int_{0\vee(r-\epsilon)}^{r} 
%F(s+\epsilon, X_{s}(\cdot))- F(s, X_{s}(\cdot))ds  \; d N_{r}
%\end{split}
%\]
%Doob's inequality implies that \eqref{eq attesa quadr} is smaller than
%\[
%4 \mathbb{E}\left[ \int_{0}^{T}\left(\frac{1}{\epsilon} \int_{0\vee(r-\epsilon)}^{r} 
%F(s+\epsilon, X_{s}(\cdot))- F(s, X_{s}(\cdot))ds\right)^{2}d[N]_{r} \right]
%\]
%The fact that $F$ is uniformly continuous on compact sets $[0,T]\times \mathcal{S}$ and the Lebesgue dominated convergence theorem imply the result.
%$\ua$
\end{proof}
%
%IPOTESI NUOVA di SUPPORT PREDICTABILITY PROPERTY\\
If $DF$ does not necessarily leave in some 
$\shd_{a}([-T,0])\oplus L^{2}([-T,0])$ space, it is still possible to express 
a variant of Theorem \ref{thm: thm 1}. The price to pay is a new property required for $DF$ which will be called 
\emph{support predictability property}. 
It is described below.

\begin{dfn} 		\label{del SPC}
Let $0\leq a<b\leq T$. A function $F:[a,b] \times C([-\tau,0])\lra \R$ such that 
$F(t,\cdot)$ is differentiable for any $t\in [a,b]$ is said to fulfill the \textbf{support predictability property} if the following holds.
For every compact $K$ of $C([-\tau,0])$, we have 
\be		\label{SPP}
\int_{a}^{b}  \left[ \sup_{\eta\in K} \frac{1}{\e}\int_{(-\e)\vee (-\tau)}^{0} \left| D^{\perp}_{dr}F \right|(t,\eta) \right] dt=O(\e) \ , 
\ee
where we recall that $D_{dr}^{\perp}F=D_{dr}F- DF(\{0 \})\delta_0(dr)$.
\end{dfn}

\begin{rem}		\label{remGBN}
Suppose that $F(t,\cdot)$ is differentiable for any $t\in [a,b]$.
\begin{enumerate}
\item Suppose the existence of $\rho >0 $ such that $D^{\perp}F(t,\eta)$ has support in $[-\tau,-\rho]$ for any $t\in [a,b]$, $\eta\in C([-\tau,0])$. 
Then $F$ fulfills the support predictability property; in fact quantity \eqref{SPP} vanishes for $\e$ small.
\item Suppose that $D^{\perp}F(t,\eta)$ is absolutely continuous for every $t\in [a,b]$.
We denote $\apt D_r^{\perp}F(t,\eta) , r\in [-\tau, 0] \cpt$, the corresponding density. 
If for any compact $K$ of $C([-\tau,0])$ there is $\rho_1>0$ such that 
$t\mapsto \sup_{r\in [-\rho_1,0],\eta\in K}\left| D^{\perp}_{r}F(t,\eta) \right|$ belongs to $L^{1}([a,b])$, then 
$F$ fulfills the support predictability property. 
This is for instance verified if $(r,t,\eta)\mapsto D^{\perp}_{r}F(t,\eta)$ is continuous. 
\end{enumerate}
\end{rem}
As announced a variant of Theorem \ref{thm: thm 1} is given below.
%We state now another version of the stability condition for weak Dirichlet processes is the following.
%
\begin{thm}	\label{thm:thm1B}
Let $0\leq a< b\leq T$ and 
$X$ be an $(\mathcal{F}_{t})$-weak Dirichlet process with finite quadratic variation and decomposition $X=M+A$, $M$ local martingale. 
Let $F:[a,b]\times C([-\tau,0])\lra \R$ continuous such that 
\begin{itemize}
\item [i)] $F(t,\cdot)$ is differentiable for every $t\in [a,b]$,
\item [ii)] $(t,\eta)\mapsto D^{\perp}F(t,\eta)$ is bounded on each compact of $[a,b]\times C([-\tau,0])$,
\item [iii)] $F$ fulfills the support predictability property,
\item [iv)]  $(t,\eta)\mapsto D^{\delta_0}F(t,\eta)$ is continuous on $]a,b]\times C([-\tau,0])$ and it admits a continuous extension on $[a,b]\times C([-\tau,0])$.
\end{itemize}
Then $F\apt \cdot , X_{\cdot}(\cdot) \cpt$ is an $(\mathcal{F}_{t})$-weak Dirichlet process with martingale part
\be \label{F1B}
\tilde{M}_t^{F}=F\apt a,X_{a}(\cdot) \cpt +\int_{a}^{t} D^{\delta_0}F\apt s,X_{s}(\cdot) \cpt dM_s \ , \quad t\in [a,b]  \ .
\ee
\end{thm}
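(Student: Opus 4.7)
The plan is to adapt the scheme of Theorem \ref{thm: thm 1} to the weaker regularity hypotheses (ii)--(iv), replacing the global continuity of $DF$ with the canonical splitting $DF = D^{\delta_0} F\,\delta_0 + D^\perp F$ together with the support predictability property (iii). By condition (iv), $s \mapsto D^{\delta_0} F(s, X_s(\cdot))$ is continuous and $(\mathcal{F}_t)$-adapted on $[a,b]$, so $\tilde M^F$ is a well-defined continuous $(\mathcal{F}_t)$-local martingale and classical stochastic calculus gives $[\tilde M^F, N]_t = \int_a^t D^{\delta_0} F(s, X_s(\cdot)) \, d[M, N]_s$ for every continuous $(\mathcal{F}_t)$-local martingale $N$. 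Setting $\bar A := F(\cdot, X_\cdot(\cdot)) - \tilde M^F$, the conclusion $[\bar A, N] \equiv 0$ is then equivalent to
\begin{equation}\label{eqGOALplan}
\lim_{\epsilon \to 0}\frac{1}{\epsilon} \int_a^t [F(s+\epsilon, X_{s+\epsilon}(\cdot)) - F(s, X_s(\cdot))](N_{s+\epsilon} - N_s) \, ds = \int_a^t D^{\delta_0} F(s, X_s(\cdot)) \, d[M, N]_s
\end{equation}
in probability for each $t \in [a, b]$.

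Following Theorem \ref{thm: thm 1}, the left-hand side of \eqref{eqGOALplan} is split as $I_1(t, \epsilon) + I_2(t, \epsilon)$ by inserting the intermediate term $F(s+\epsilon, X_s(\cdot))$. The time-increment contribution $I_2$ is treated verbatim as in Theorem \ref{thm: thm 1}: stochastic Fubini rewrites it as an It\^o integral against $dN$ whose integrand is uniformly bounded by $\varpi_F^{[a,b]\times \mathcal{U}}(\epsilon)$, the modulus of continuity of $F$ on the compact $[a,b] \times \mathcal{U}$ with $\mathcal{U}(\omega) = \overline{\textrm{conv}}\{X_t(\omega) : t \in [a,b]\}$ compact by Theorem 5.35 of \cite{InfDimAn}; Proposition 2.26, Chapter 3 of \cite{ks} then gives ucp convergence to zero, using only the continuity of $F$. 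For $I_1$ the mean value theorem applied to $\eta \mapsto F(s+\epsilon, \eta)$ yields, with $X_s^\alpha := (1-\alpha)X_s + \alpha X_{s+\epsilon}$,
$$I_1(t,\epsilon) = \int_a^t \int_0^1 \langle DF(s+\epsilon, X_s^\alpha(\cdot)), X_{s+\epsilon}(\cdot) - X_s(\cdot)\rangle \, d\alpha \cdot \frac{N_{s+\epsilon} - N_s}{\epsilon} \, ds,$$
which splits as $I_1 = I_1^{(0)} + I_1^\perp$ according to the decomposition $DF = D^{\delta_0} F\,\delta_0 + D^\perp F$. The Dirac piece $I_1^{(0)}$ involves $\int_0^1 D^{\delta_0} F(s+\epsilon, X_s^\alpha)\,d\alpha \cdot (X_{s+\epsilon}-X_s)(N_{s+\epsilon}-N_s)/\epsilon$; the continuity of $D^{\delta_0} F$ on the compact $[a,b]\times \mathcal{U}$ from (iv), together with the identity $[X, N] = [M, N]$ (as $A$ is $(\mathcal{F}_t)$-martingale orthogonal) and Proposition \ref{prFD}, shows that $I_1^{(0)}(t, \epsilon)$ converges in probability to the right-hand side of \eqref{eqGOALplan}.

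The main obstacle is to prove that $I_1^\perp(t, \epsilon) \to 0$ in probability, and this is where the support predictability property (iii) is essential. The natural strategy is to split the inner $[-\tau, 0]$-integration at $r=-\epsilon$. On the near-zero part $r \in (-\epsilon, 0]$, pulling out $|X_{s+\epsilon+r} - X_{s+r}| \leq \varpi_X(\epsilon)$ and applying Cauchy--Schwarz to the remaining double integral gives a bound of the form
$$\varpi_X(\epsilon) \left( \int_a^t \sup_{\eta\in\mathcal{U}}\int_{(-\epsilon,0]}|D^\perp_{dr}F|(s+\epsilon,\eta) \, ds \right)^{1/2} \left(\int_a^t \frac{(N_{s+\epsilon}-N_s)^2}{\epsilon^2}\,ds\right)^{1/2}\cdot \sup_{s}\sup_{\eta\in\mathcal{U}}\|D^\perp F(s,\eta)\|_{TV}^{1/2},$$
in which (iii) forces the first factor to be $O(\epsilon)$, the second is $O(1/\sqrt{\epsilon})$ via $[N]_t<\infty$, and the last supremum is finite by (ii) plus compactness; the overall estimate is $\varpi_X(\epsilon)\cdot O(\sqrt{\epsilon}) \to 0$. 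On the far-from-zero part $r \in [-\tau, -\epsilon]$, the key observation is that $s+\epsilon+r \leq s$ makes $X_{s+\epsilon+r} - X_{s+r}$ $\mathcal{F}_s$-measurable; after replacing $X_s^\alpha$ by $X_s$ inside $D^\perp F$, stochastic Fubini turns the corresponding expression into an It\^o integral against $dN$ whose integrand is bounded by $\varpi_X(\epsilon) \cdot \sup_{(t,\eta)\in[a,b]\times\mathcal{U}}\|D^\perp F(t,\eta)\|_{TV}$, and Proposition 2.26 of \cite{ks} again gives ucp convergence to zero. The most delicate piece is the residual error from the replacement $X_s^\alpha \rightsquigarrow X_s$ inside $D^\perp F$: since $D^\perp F$ is not assumed continuous in $\eta$ for the total variation topology, the naive bound via $\|D^\perp F(s+\epsilon,X_s^\alpha) - D^\perp F(s+\epsilon,X_s)\|_{TV}$ is not small and a direct Cauchy--Schwarz would produce a divergent $1/\sqrt{\epsilon}$; the right treatment is to argue that only the mass of $D^\perp F$ close to $r = 0$ produces the obstruction, and that mass is precisely what is controlled by (iii), via a secondary splitting that reuses the near-zero bound above. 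This constitutes the main technical hurdle of the proof.
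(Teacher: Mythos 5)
Your overall architecture --- the $I_1+I_2$ split, the verbatim treatment of $I_2$, the decomposition of $I_1$ along $DF = D^{\delta_0}F\,\delta_0 + D^{\perp}F$, the identification of the limit of the Dirac piece via Proposition \ref{prFD} and $[X,N]=[M,N]$, and the cut of the $D^{\perp}$ piece at $r=-\epsilon$ with the support predictability property absorbing the near-zero mass --- is exactly the paper's. The near-zero estimate also goes through (your Cauchy--Schwarz route yields $\varpi_X(\epsilon)\cdot O(1)$; the paper gets $\varpi_X(\epsilon)\varpi_N(\epsilon)\cdot O(1)$ more directly by also extracting $|N_{s+\epsilon}-N_s|\le \varpi_N(\epsilon)$, and both tend to zero).

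The genuine gap is in the far-from-zero piece, precisely where you flag ``the main technical hurdle'': the replacement of $X_s^\alpha$ by $X_s$ inside $D^{\perp}F$ is both unrepairable under the stated hypotheses and unnecessary. Unrepairable, because the residual measure $D^{\perp}F(s+\epsilon,X_s^\alpha)-D^{\perp}F(s+\epsilon,X_s)$ restricted to $[-\tau,-\epsilon]$ is controlled neither by (iii) --- which only bounds the mass of $D^{\perp}F$ on a shrinking neighbourhood of $r=0$ and says nothing about the difference of two such measures away from $0$ --- nor by any continuity of $\eta\mapsto D^{\perp}F(t,\eta)$, since none is assumed; your proposed ``secondary splitting reusing the near-zero bound'' therefore cannot close it. Unnecessary, because the paper keeps the argument at $X_s^\alpha$ throughout: setting $Z_s(\epsilon)=\int_0^1\int_{-\tau}^{-\epsilon}D^{\perp}_{dr}F\bigl(s+\epsilon,(1-\alpha)X_s(\cdot)+\alpha X_{s+\epsilon}(\cdot)\bigr)\bigl(X_{s+r+\epsilon}-X_{s+r}\bigr)\,d\alpha$, one has the pointwise bound $|Z_s(\epsilon)|\le \varpi_X(\epsilon)\,\sup_{(t,\eta)\in[a,b]\times\mathcal{U}}\|D^{\perp}F(t,\eta)\|_{Var([-\tau,0])}$ using only hypothesis (ii) and the compactness of $\mathcal{U}$ --- no comparison of $D^{\perp}F$ at two different arguments is ever needed. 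Feeding this uniform bound into the stochastic Fubini representation $J_{13}(t,\epsilon)=\int_0^t\xi(u,\epsilon)\,dN_u$ with $\xi(u,\epsilon)=\frac{1}{\epsilon}\int_{(u-\epsilon)^+}^{u}Z_s(\epsilon)\,ds$ gives $\int_0^T\xi^2(u,\epsilon)\,d[N]_u\le \varpi_X^2(\epsilon)\,C^2\,[N]_T\to 0$, and Proposition 2.26, Chapter 3 of \cite{ks} concludes. (Your adaptedness worry --- $Z_s(\epsilon)$ is only $\mathcal{F}_{s+\epsilon}$-measurable --- is what motivated the replacement, but the paper applies the Fubini criterion to $Z_s(\epsilon)$ as is; in any case the uniform bound, not the replacement, is what makes the term vanish.) The correct fix is therefore to delete the replacement step and estimate the $X_s^\alpha$-expression directly.
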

\begin{proof} 
Without restriction of generality we will suppose $a=0$ and $b=T$.
The proof follows from a modification of the one of Theorem \ref{thm: thm 1}. 
\eqref{eq ZERO} was expressed as the sum of $I_1(t,\e)$ and $I_2(t,\e)$. $I_1(t,\e)$ is the sum of 
$I_{11}(t,\e)$ and $I_{12}(t,\e)$ where 
\[
\begin{split}
I_{11}(t,\e) &=
\int_{0}^t D^{\delta_0}F\apt s+\e, X_s(\cdot)  \cpt \frac{ \apt X_{s+\e}-X_s \cpt \apt N_{s+\e}-N_s\cpt }{\e}ds   \, , \\
I_{12}(t,\e) &=
\int_{0}^{t} \int_{0}^{1}  \left[  D^{\delta_0}F\apt s+\e,(1-\alpha) X_s(\cdot)+ \alpha X_{s+\e}(\cdot) \cpt  -D^{\delta_0}F\apt s+\e, X_s(\cdot) \cpt  \right]d\alpha\ \frac{\apt X_{s+\e}-X_s \cpt \apt N_{s+\e}-N_s\cpt}{\e}  ds   \, , \\
I_{13}(t,\e) &=
\int_{0}^{t} 
\int_{0}^{1}
\prescript{}{\mathcal{M}([-\tau,0])}{\langle }   
D^{\perp} F \apt s+\e,(1-\alpha) X_s(\cdot)+ \alpha X_{s+\e}(\cdot) \cpt  \, , \apt X_{s+\e}(\cdot)-X_s(\cdot)\cpt \rangle_{C([-\tau,0])}  d\alpha\ \frac{ \apt N_{s+\e}-N_s\cpt}{\e}  ds  \ . \\
\end{split}
\]
We have 
\[
I_{11}(t, \e) =J_{11}(t,\e)+R_{11}(t,\e)
\]
where 
\[
J_{11}(t,\e)
=
\int_{0}^{t} 
D^{\delta_0}F\apt s, X_s(\cdot)  \cpt \frac{ \apt X_{s+\e}-X_s \cpt \apt N_{s+\e}-N_s\cpt }{\e}ds  
\]
and 
$
\sup_{t\leq T} \left| R_{11}(t,\e)  \right| \xrightarrow[\epsilon\lra 0]{} 0
$
in probability because $D^{\delta_0} F$ is continuous by item iv)  and there uniformly continuous on each compact.
In fact $(X,N)$ have all their mutual covariations, then by Proposition \ref{prFD} and the fact that $[X,N]=[M,N]$, clearly, 
\[
J_{11}(t,\e)\xrightarrow[\e\lra 0]{ucp} 
\int_{0}^{t} D^{\delta_0} F\apt s,  X_{s}(\cdot) \cpt d[M,N]_s   \ .
\] 
$I_{12}(t,\e)$ behaves similarly to $R_{1}(t,\epsilon) $ in \eqref{eq 4.25bis}, so it converges ucp to zero. 
Term $I_{13}(t,\e)$ can be rewritten as
\[
I_{13}(t,\e)=
\int_0^t
\int_0^1
\int_{-\tau}^{0} D^{\perp}_{dr}F\apt s+\e, (1-\alpha)X_{s}(\cdot)+\alpha X_{s+\e}(\cdot) \cpt \apt X_{s+r+\e}-X_{s+r}\cpt d\alpha \  \frac{N_{s+\e}-N_{s} }{\e} ds
\]
and it decomposes into $J_{13}(t,\e)+R_{13}(t,\e)$ where 
\[
J_{13}(t,\e)=
\int_{0}^{t} Z_{s}(\e)  \frac{N_{s+\e}-N_{s} }{\e} ds
\]
with 
\[
Z_{s}(\e)
=
\int_0^1
\int_{-\tau}^{-\e} D^{\perp}_{dr}F\apt s+\e, (1-\alpha)X_{s}(\cdot)+\alpha X_{s+\e}(\cdot) \cpt \apt X_{s+r+\e}-X_{s+r}\cpt d\alpha 
\]
and 
\[
\begin{split}
R_{13}(t,\e)& 
=
\int_{0}^{t} \int_0^1
\int_{-\e}^{0} D^{\perp}_{dr}F\apt s+\e, (1-\alpha)X_{s}(\cdot)+\alpha X_{s+\e}(\cdot) \cpt \apt X_{s+r+\e}-X_{s+r}\cpt d\alpha \ \frac{N_{s+\e}-N_{s} }{\e} ds\\
&
=
\int_{\e}^{t+\e} \int_0^1
\int_{-\e}^{0} D^{\perp}_{dr}F\apt s, (1-\alpha)X_{s-\e}(\cdot)+\alpha X_{s}(\cdot) \cpt \apt X_{s+r}-X_{s+r-\e}\cpt d\alpha \ \frac{N_{s}-N_{s-\e} }{\e} ds
\end{split}
\]
By stochastic Fubini's theorem we obtain
\[
J_{13}(t,\e)=\int_{0}^{t} \xi(u,\e) dN_{u}   \ , 
\quad 
\textrm{with} 
\quad
\xi(u,\e)=\frac{1}{\e}\int_{(u-\e)^{+}}^{u} Z_{s}(\e) ds  \; .
\]
Proposition  2.26, chapter 3 of \cite{ks} says that $J_{13}(\cdot, \epsilon)\xrightarrow [\epsilon\rightarrow 0]{ucp }0$ if 
\be 				\label{D4}
\int_{0}^{T} \xi^2(u,\e)d[N]_{u} 
\xrightarrow [\epsilon\rightarrow 0]{ }0
\ee
in probability. 
We have 
\[
\begin{split}
\left| 
Z_{s}(\e)
\right|
& \leq 
\int_{(u-\e)^{+}}^{u}
\frac{1}{\e} 
\int_{0}^{1}
\int_{-\tau}^{-\e} \left\|   
D^{\perp}_{dr}F\apt s+\e , (1-\alpha) X_{s}(\cdot) +\alpha X_{s+\e}(\cdot) \cpt 
\right\|
\left| 
X_{s+r+\e}-X_{s+r}
\right|
 d\alpha \ ds\\
 &
 \leq 
 \varpi_{X}(\e) \int_{0}^{1} \left\|   
D^{\perp}_{dr}F\apt s+\e , (1-\alpha) X_{s}(\cdot) +\alpha X_{s+\e}(\cdot) \cpt 
\right\|_{Var([-\tau,0])}d\alpha\\
&
\leq 
 \varpi_{X}(\e) \sup_{(t,\eta)\in [0,T]\times \mathcal{U}}
 \left\|  D^{\perp}_{dr}F \right\|_{Var([-\tau,0])}(t,\eta)
 \\
 \end{split}
 \]
 where $\mathcal{U}=\mathcal{U}(\omega)=\overline{conv(\mathcal{V})(\omega)}$ where $\mathcal{V}(\omega)$ was defined in \eqref{eq 4.25ter}. 
 Previous expression is bounded because of 
 item ii) in the assumptions and since $\mathcal{U}$ is a compact set in the infinite dimensional space $C([-\tau,0])$. 
So $\xi^{2}(u,\e)\leq \varpi_{X}^{2}(\e) \sup_{(t,\eta)\in [0,T]\times \mathcal{U}} \left\|
D^{\perp}_{dr}F
\right\|^{2}_{Var([-\tau,0])}$ since 
\[
\left| \xi (u,\e) \right|=\left|\frac{1}{\e}\int_{(u-\e)^{+}}^{u} Z_{s}(\e) ds \right| \leq  \varpi_{X}(\e) \sup_{(t,\eta)\in [0,T]\times \mathcal{U}}
 \left\|  D^{\perp}_{dr}F \right\|_{Var([-\tau,0])}(t,\eta)  \ .
\] 
Finally the left-hand side of \eqref{D4} is bounded by 
\[
 \varpi_{X}^{2}(\e) \sup_{(t,\eta)\in [0,T]\times \mathcal{U}} \left\|
D^{\perp}_{dr}F
\right\|^{2}_{Var([-\tau,0])} \int_{0}^T d[N]_{u}=\varpi_{X}^{2}(\e) \sup_{(t,\eta)\in [0,T]\times \mathcal{U}} \left\|
D^{\perp}_{dr}F
\right\|^{2}_{Var([-\tau,0])}[N]_{T}
\]
which converges to zero a.s. for $\e \lra 0$.\\
It remains to control $R_{13}(t,\e)$. This term is bounded by 
\[
\varpi_{N}(\e)\ \varpi_{X}(\e) \int_{0}^{T+\e }\frac{1}{\e} \left[ \sup_{\eta\in\mathcal{U}(\omega)} \int_{-\e}^{0} \left| D_{dr} F\right| (t,\eta) \right] dt
\]
The result follows since $F$ fulfills the support predictability property.
\end{proof}
We present a slight generalization of Theorem \ref{thm:thm1B}.
\begin{thm}		\label{thm ter}
Let $X$ be an $(\F_t)$-weak Dirichlet process with finite quadratic variation with decomposition $X=M+A$, $M$ local martingale.
Let $F: [0,T] \times C([-\tau,0])\lra \R $ continuous, fulfilling
 assumptions i), ii) and iii) of Theorem \ref{thm:thm1B} for $a=0$ and $b=T$. 
Suppose the existence of $0=a_0< a_1<\ldots<a_{N}=T$ such that $(t,\eta)\mapsto D^{\delta_0}F(t,\eta)$ is continuous 
on $]a_{i},a_{i+1}]\times C([-\tau,0])$ admitting a continuous
  extension
 on $[a_{i},a_{i+1}]\times C([-\tau,0])$, for $0\leq i\leq (N-1)$.\\ 
Then $\apt F\apt t,X_{t}(\cdot) \cpt \cpt_{t\in [0,T]}$ is an
 $(\mathcal{F}_{t})$-weak Dirichlet process with 
local martingale part
\be
\tilde{M}^{F}_{t}=F\apt 0, X_{0}(\cdot) \cpt +\int_0^t D^{\delta_{0}} F\apt s,X_{s}(\cdot)\cpt d M_s \ .
\ee
\end{thm}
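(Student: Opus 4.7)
The plan is to apply Theorem \ref{thm:thm1B} successively on each subinterval $[a_i, a_{i+1}]$ for $i = 0, \ldots, N-1$, and then glue the resulting local weak Dirichlet decompositions into a global one.

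On each subinterval, hypotheses i), ii) and iii) of Theorem \ref{thm:thm1B} are inherited from their global counterparts; in particular, the $O(\e)$ bound in the support predictability property \eqref{SPP} restricted to $[a_i, a_{i+1}]$ follows from the one on $[0, T]$. Hypothesis iv) localized to $[a_i, a_{i+1}]$ is exactly the new assumption of the theorem. Theorem \ref{thm:thm1B} therefore provides, for each $i$,
\[
F(t, X_t(\cdot)) = F(a_i, X_{a_i}(\cdot)) + \int_{a_i}^{t} D^{\delta_0} F(s, X_s(\cdot)) \, dM_s + B^{(i)}_t, \qquad t \in [a_i, a_{i+1}],
\]
with $B^{(i)}_{a_i} = 0$ and $B^{(i)}$ an $(\shf_t)$-martingale-orthogonal process on $[a_i, a_{i+1}]$. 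Since $D^{\delta_0} F$ is bounded on each compact $[a_i, a_{i+1}] \times \mathcal{U}$, where $\mathcal{U} = \overline{conv(\{X_t(\cdot) : t \in [0,T]\})}$ is compact by Theorem 5.35 in \cite{InfDimAn}, the integrand is locally bounded on $[0, T]$ and the subinterval It\^o integrals concatenate into the well-defined global integral $\tilde{M}^F_t - F(0, X_0(\cdot))$.

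Setting $\bar{A}_t := F(t, X_t(\cdot)) - \tilde{M}^F_t$, which is continuous with $\bar{A}_0 = 0$, a telescoping across the partition yields $\bar{A}_t - \bar{A}_{a_i} = B^{(i)}_t$ on each $[a_i, a_{i+1}]$. The crux is to show that $\bar{A}$ is globally martingale-orthogonal. For any continuous $(\shf_t)$-local martingale $N$ and $t \in [a_i, a_{i+1}]$, since on this subinterval $\bar{A}$ differs from $B^{(i)}$ by the constant $\bar{A}_{a_i}$, bilinearity of the covariation together with the invariance of covariation under constant shifts yields $[\bar{A}, N]_t - [\bar{A}, N]_{a_i} = [B^{(i)}, N]_t = 0$. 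A finite induction starting from $[\bar{A}, N]_0 = 0$ then gives $[\bar{A}, N] \equiv 0$, as required.

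The main technical obstacle is to justify the above bilinearity step at the level of the regularization defining the covariation. The identity $\bar{A}_{s+\e} - \bar{A}_s = B^{(i)}_{s+\e} - B^{(i)}_s$ holds only when both $s$ and $s+\e$ lie in $[a_i, a_{i+1}]$, and breaks on the boundary strips $s \in [a_{i+1} - \e, a_{i+1}]$. On each such strip, using the continuity moduli of $\bar{A}$ and $N$, the regularization integrand $\e^{-1}(\bar{A}_{s+\e} - \bar{A}_s)(N_{s+\e} - N_s)$ is bounded by $\e^{-1} \varpi_{\bar{A}}(\e) \, \varpi_N(\e)$, so its integral over a set of length $\e$ is $O(\varpi_{\bar{A}}(\e)\, \varpi_N(\e))$. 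Summed over the finitely many partition points, the boundary contributions vanish in the ucp limit, which completes the gluing and yields the asserted decomposition.
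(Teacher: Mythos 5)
Your proof is correct and follows essentially the same route as the paper: restrict $F$ to each $[a_i,a_{i+1}]\times C([-\tau,0])$, check that hypotheses i)--iii) of Theorem \ref{thm:thm1B} are inherited and that iv) is the new assumption, apply Theorem \ref{thm:thm1B} on each subinterval, and sum the resulting covariation identities. The paper compresses the gluing into the phrase ``follows by summation''; your explicit handling of the boundary strips $s\in[a_{i+1}-\e,a_{i+1}]$ in the regularization, via the continuity moduli, is precisely the detail that justifies that summation.
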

\begin{proof}
Let $N$ be an $(\F_t)$-local martingale. Since 
 \be		\label{eq FC1} 
 \left[ \tilde{M}^{F}, N \right]_t=\int_0^t D^{\delta_{0}} F\apt s,X_{s}(\cdot)\cpt d [M,N]_s  \ , \quad t\in [0,T]  \ ,
 \ee
 it will be enough to show that 
 \be		\label{eq 4.36}
 \left[ (F\apt t,X_{t}(\cdot))\cpt  , N \right]_t=
 \int_0^t D^{\delta_{0}} F\apt s,X_{s}(\cdot)\cpt d [M,N]_s  \ , \quad t\in [0,T]  \ .
 \ee 
 We observe that 
$F |_{[a_i, a_{i+1}]\times C([-\tau,0])}$ verifies the assumptions of Theorem \ref{thm:thm1B} with $a=a_i$, $b=a_{i+1}$. 
Consequently for $t\in ]a_{i},a_{i+1}]$,  $i\in \{0, \ldots, (N-1) \}$, $\apt F\apt t,X_{t}(\cdot) \cpt \cpt_{t\in [a_i,a_{i+1}]}$ is an $(\mathcal{F}_{t})$-weak Dirichlet process with 
local martingale part 
\[
\tilde{M}^{i}_{t}=F\apt a_{i}, X_{a_i}(\cdot) \cpt +\int_{a_i}^t D^{\delta_{0}} F\apt s,X_{s}(\cdot)\cpt d M_s \ , \quad t\in[a_i, a_{i+1}[  \ .
\]
\eqref{eq 4.36} follows by summation.
\end{proof}
We discuss now some consequences related to the martingale representation.
\subsection{About some martingale representation}
Suppose that $X$ is an $(\F_t)$-weak Dirichlet process with finite quadratic variation with decomposition $X=M+A$, $M$ local martingale.
Let $h\in L^{1}(\Omega)$. 
We are interested in sufficient conditions so that
\begin{equation} 		\label{eq: marting represent no br}
h=h_0+\int_{0}^{T} \xi_{s}dM_{s}
\end{equation} 
where $(\xi_{s})$ is an explicit predictable process, $h_0\in \R$.\\

The two results below are a consequence respectively of
 Theorems \ref{thm: thm 1} and \ref{thm ter}. They settle the basis
 for a representation 
of integrable random variables. 
 $\shd_{a}\oplus L^{2}$ will denote here $\shd_{a}([-\tau,0])\oplus 
L^{2}([-\tau,0])$. 
%
%ABOUT A MARTINGALE REPRESENTATION THEOREM BUT NOT BROWNIAN\\
%xxx qui posso anche sostituire $M$ con $X=M+V$ 
\begin{prop}		\label{cor thm 1 weak Dirichlet}
Let $F:[0,T]\times C([-\tau,0])\longrightarrow \R$ 
continuous such that $(s,\eta)\mapsto D F(s,\eta)$ exists with
 values in $\shd_{a}\oplus L^{2}$ and 
$DF:[0,T] \times C([-\tau,0])\longrightarrow \shd_{a}\oplus L^{2}$ is
 continuous. 
If moreover 
\begin{equation}	\label{eq F445}
\mathbb{E}\left[  h\vert \F_{t}  \right]= F(t,X_{t}(\cdot)) \ \textrm{ a.s.} \quad \forall\; t\in [0,T[ 
\end{equation}
then
\begin{equation}	\label{eq F446}
h=%\mathbb{E}[h\vert \mathcal{F}_{0}]+\int_{0}^{T} D^{\delta_{0}}u(s,X_{s}(\cdot))dM_{s} = 
F(0,X_{0}(\cdot))+\int_{0}^{T} D^{\delta_{0}}F(s,X_{s}(\cdot))dM_{s}  \; .
\end{equation}
%where as usual $D^{\delta_{0}}u(s,\eta)$ denotes $Du(s,\eta)(\{  0\})$.
\end{prop}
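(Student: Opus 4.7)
The plan is to apply Theorem \ref{thm: thm 1} to the process $Y_t := F(t, X_t(\cdot))$ and then exploit the martingale condition \eqref{eq F445} to force the $(\F_t)$-martingale orthogonal component of the decomposition to vanish.

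First I would invoke Theorem \ref{thm: thm 1}: the assumptions on $F$ are exactly those of that theorem, so $Y$ is an $(\F_t)$-weak Dirichlet process with canonical decomposition $Y = \bar{M}^F + \bar{A}^F$, where
\[
\bar{M}^F_t = F(0,X_0(\cdot)) + \int_0^t D^{\delta_0}F(s,X_s(\cdot))\, dM_s
\]
is a continuous $(\F_t)$-local martingale and $\bar{A}^F$ is an $(\F_t)$-martingale orthogonal process with $\bar{A}^F_0 = 0$.

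Next I would use hypothesis \eqref{eq F445}: on $[0,T[$, $Y_t = \E[h\vert\F_t]$ is an $(\F_t)$-martingale, in particular an $(\F_t)$-local martingale. Hence on $[0,T[$ the process $\bar{A}^F = Y - \bar{M}^F$ is a continuous $(\F_t)$-local martingale. But $\bar{A}^F$ is also, by construction, an $(\F_t)$-martingale orthogonal process, so $[\bar{A}^F,\bar{A}^F] = 0$. A continuous local martingale with zero quadratic variation starting at $0$ is identically zero (alternatively, this is simply the uniqueness of the weak Dirichlet decomposition, see Remark 3.5 in \cite{rg2} or Proposition 16 in \cite{Rus05}). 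Therefore $\bar{A}^F_t \equiv 0$ on $[0,T[$, so $Y_t = \bar{M}^F_t$ for every $t \in [0,T[$.

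To conclude, I would pass to the limit $t \uparrow T$. The right-hand side $\bar{M}^F_t$ converges a.s.\ to $\bar{M}^F_T$ by continuity of the Itô integral. The left-hand side $Y_t = F(t, X_t(\cdot))$ converges a.s.\ to $F(T,X_T(\cdot))$ by joint continuity of $F$ and continuity of the trajectories $t \mapsto X_t(\cdot)$ in $C([-\tau,0])$. Moreover $Y_t = \E[h\vert\F_t]$ converges a.s.\ and in $L^1$ to $h$ (by martingale convergence, since under the usual conditions $\F_{T^-} = \F_T$ and $h$ is $\F_T$-measurable, as forced by the hypothesis itself). Identifying the two limits gives $h = \bar{M}^F_T$, which is exactly \eqref{eq F446}. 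The only mildly delicate step is this passage to $t = T$; everything else is an immediate application of Theorem \ref{thm: thm 1} together with the uniqueness of the weak Dirichlet decomposition.
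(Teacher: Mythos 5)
Your proof is correct and follows essentially the same route as the paper's: apply Theorem \ref{thm: thm 1}, observe that \eqref{eq F445} makes $F(\cdot,X_\cdot(\cdot))$ a martingale, and invoke uniqueness of the weak Dirichlet decomposition to kill the orthogonal part. Your explicit treatment of the passage to the limit $t\uparrow T$ is a welcome extra precision that the paper glosses over by simply writing $h=F(T,X_T(\cdot))$.
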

\begin{rem}
We observe that $F\apt 0, X_0(\cdot) \cpt=\mathbb{E}[h\vert \mathcal{F}_{0}]$.
\end{rem}
\begin{proof}\
Since $F$ verifies the assumptions of Theorem \ref{thm: thm 1}, then 
$F(\cdot, X_{\cdot}(\cdot))$ is an $(\F_{t})$-weak Dirichlet process with martingale 
part given by 
\begin{equation}		\label{eq mrtpart}
M^{F}_{t}=F(0,X_{0}(\cdot))+\int_{0}^{t} D^{\delta_{0}} F(s, X_{s}(\cdot)) dM_{s}	\ ,
\end{equation}
according to \eqref{eq: mart part uC01}.
By \eqref{eq F445}, $F(\cdot, X_{\cdot}(\cdot))$ is obviously an
 $(\F_{t})$-martingale being a conditional expectation with respect to 
filtration $(\F_{t})$. By the uniqueness of the decomposition of $(\F_{t})$-weak Dirichlet processes, 
%, see Remark 3.5 in \cite{rg2}, 
it follows 
\[
F(t,X_{t}(\cdot))=F(0,X_{0}(\cdot))+\int_{0}^{t}D^{\delta_{0}}F(s,X_{s}(\cdot))dM_{s}\; .
\]
In particular the $(\F_{t})$-martingale orthogonal component is zero. 
Since $h=F(T,X_{T}(\cdot))$ and $F(0,X_{0}(\cdot))=\mathbb{E}[h\vert \mathcal{F}_{0}]$ the result follows.
%It holds
%\[
%h=u(0,M_{0}(\cdot))+\int_{0}^{T}D^{\delta_{0}}u(s,M_{s}(\cdot))dM_{s}
%\]
\end{proof}
%
%A consequence of Theorem \ref{thm ter} gives a representation for general random variables.
% 
\begin{cor}		\label{cor CCS}  %faccio solo ipotesi su h
Let $X$ be an $(\F_t)$-weak Dirichlet process with finite quadratic variation with decomposition $X=M+A$, $M$ local martingale.\\
%Let $h:=\Phi \apt X_{T}(\cdot) \cpt$ where $\Phi:C([-T,0])\lra \R$ is of class $C^{1}$ Fr\'echet with polynomial growth. 
%We set $u(s,\eta)=\E\left[ \Phi \apt Y^{s,\eta}\cpt \right]$. 
Let $0< a_1< \ldots < a_{N}=T$. 
Let $h\in L^{1}(\Omega)$. Let $F:[0,T]\times C([-\tau, 0])\lra \R$ verifying the following properties.
\begin{itemize}
\item [a)] $F(s,\cdot)$ is differentiable $\forall \ s\in [0,T]$.
\item [b)] $F$ fulfills the support predictability property.
\item [c)] $(s,\eta)\mapsto D^{\delta_{0}}F(s,\eta)$ is continuous on
 $]a_i,a_{i+1}]$, $0 \leq i\leq N-1$ and it admits a continuous extension
 on $[a_{i},a_{i+1}]$.
\item [d)] $(s,\eta)\mapsto D^{\perp}F(s,\eta)$ is bounded for each
 compact,
 with respect to the total variation norm.
\item [e)] $F\apt s,X_{s}(\cdot) \cpt =\E\left[ h | \F_s \right]$.
\end{itemize}
Then 
\be	\label{eq HED}
h=F(0,X_{0}(\cdot))+\int_{0}^{T} D^{\delta_{0}}F \apt s, X_{s}(\cdot) \cpt dM_s  \ .
\ee
\end{cor}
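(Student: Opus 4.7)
The plan is to follow very closely the argument used for Proposition \ref{cor thm 1 weak Dirichlet}, replacing the appeal to Theorem \ref{thm: thm 1} by an appeal to Theorem \ref{thm ter}. The hypotheses a), b), c), d) on $F$ match exactly the hypotheses i), ii), iii), and the stepwise-continuity condition of Theorem \ref{thm ter} associated to the partition $0=a_0<a_1<\ldots <a_N=T$. Therefore that theorem applies and $\bigl(F(t,X_t(\cdot))\bigr)_{t\in[0,T]}$ is an $(\F_t)$-weak Dirichlet process with local martingale component
\begin{equation*}
\tilde{M}^F_t = F(0,X_0(\cdot)) + \int_0^t D^{\delta_0} F(s,X_s(\cdot))\, dM_s.
\end{equation*}

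Next, I would invoke assumption e): the process $t\mapsto F(t,X_t(\cdot)) = \E[h|\F_t]$ is an $(\F_t)$-martingale (it is a version of the Doob martingale of $h$, and by e) it is continuous in $t$). Hence it is in particular a weak Dirichlet process with zero martingale-orthogonal part and itself as martingale component. By the uniqueness of the decomposition of a continuous $(\F_t)$-weak Dirichlet process (recalled after Definition \ref{dfn FtWD}) it follows that
\begin{equation*}
F(t,X_t(\cdot)) = F(0,X_0(\cdot)) + \int_0^t D^{\delta_0} F(s,X_s(\cdot))\, dM_s, \qquad t \in [0,T].
\end{equation*}

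Finally I would evaluate at $t=T$. Since $h$ is $\F_T$-measurable, assumption e) gives $F(T,X_T(\cdot)) = \E[h|\F_T] = h$, which yields \eqref{eq HED}. There is no real obstacle here: the work has already been done in Theorem \ref{thm ter}, and the only subtle point to verify carefully is that e) genuinely produces a (continuous) $(\F_t)$-martingale so that the uniqueness of the weak Dirichlet decomposition can be applied to it.
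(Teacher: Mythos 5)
Your proposal is correct and follows essentially the same route as the paper: apply Theorem \ref{thm ter} (with hypotheses a)--d) matching its assumptions for the grid $0=a_0<a_1<\ldots<a_N=T$) to identify the local martingale part of the weak Dirichlet process $\apt F(t,X_t(\cdot))\cpt$, then use e) and the uniqueness of the weak Dirichlet decomposition to conclude, evaluating at $t=T$. No discrepancy with the paper's argument.
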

\begin{proof}
By Theorem \ref{thm ter} 
$\apt F(\apt t, X_{t}(\cdot)\cpt \cpt_{t\in [0,T]}$ is an $(\mathcal{F}_{t})$-weak Dirichlet process. On the other hand by construction 
$\apt F(\apt t, X_{t}(\cdot)\cpt \cpt_{t\in [0,T]}$ is an 
$(\F_{t})$-martingale. The result follows again by uniqueness of the decomposition of an $(\F_t)$-weak Dirichlet process.
\end{proof}

\begin{rem}	\label{rem5.19}
Suppose $h\in L^2(\Omega)$. Indeed previous corollary can be stated 
requiring the same assumptions a), b), c), d), e) 
but on $F$ restricted to $[0,b] \times C([-\tau,0])$
for 
%replacing $T$ with
 every $b\in ]a_{N-1}, T[$. 
In fact, the square integrable martingale $\E\apq h | \F_t\cpq $ can
 be decomposed according to Kunita-Watanabe's theorem,
 it can be decomposed into the sum 
\[
F(0,X_{0}(\cdot))+\int_{0}^t\xi_s dM_s +N_t 	 \ , \quad t\in [0,T]
\]
where $N$ is an $(\F_t)$-local martingale strongly orthogonal to $M$, 
i.e. $[N,M]=0$; moreover we know that 
$\int_{0}^T \xi^{2}_sd[M]_s<+\infty $ a.s. Since
 $\apt F(t, X_t(\cdot)) \cpt_{t\in [0,b]}$ is also an $(\F_t)$-weak 
Dirichlet process by Theorem \ref{thm ter}, 
the uniqueness of the decomposition implies that 
\[
\int_{0}^t D^{\delta_0} F\apt s, X_s(\cdot) \cpt dM_s =\int_{0}^t \xi_s dM_s+N_t \ ,\quad t\in [0,b]
\]
and so $N_s = 0, s\in [0,b]$ and 
$\xi_s\equiv D^{\delta_0}F(s,X_s(\cdot))  $. This implies \eqref{eq HED}.
\end{rem}
\section{Consequences on quasi explicit representations of path dependent random variables}		\label{sec: last}
\subsection{General considerations}
This section has illustrative features. We test our method on the 
representation of a random variable which depends on the path of a
 diffusion process. 
It will be a toy model for future investigations with applications in verification theorems in control theory on functional dependent equation.
Let $\apt W_t\cpt_{t\in [0,T]}$ be a standard Brownian motion with
respect 
to some usual filtration $(\F_t)$.
%Let $\tau>0$. 
In this section $\eta$ (resp. $\gamma$) will denote an element in $C([-T, 0])$ (resp. $C([0,T])$). 
Let $(X_t)_{t\in [0,T]}$ be a real continuous process; with $X$ we will also denote the whole trajectory of $X$ in $C([0,T])$. 
$a$ will stand for a grid $0=a_0<a_1<\ldots <a_N=T$, not anymore in
 $[-T,0]$ as before. In this section $\tau$ will be equal to $T$.
 
 We aim at implementing Corollary \ref{cor CCS} when $X$ is solution 
of a SDE of the type 
\be		\label{eq SDE}
X_{t}=X_0+\int_0^t\sigma(r,X_r)dW_r +\int_{0}^t b(r,X_r) dr \; ,
\ee
where $\sigma,b:[0,T]\times \R\rightarrow \R$ are continuous, $\partial_x \sigma$ and $\partial_x b$ exist and they are bounded. 
We remark that $\sigma$ is possibly degenerate. 
With  $\partial_x \sigma$ (resp. $\partial_x b$) we will denote the derivative of $\sigma$ (resp. $b$) with respect to the second argument. 
In the present context $X$ is an $(\F_t)$-semimartingale, therefore  an $(\F_t)$-weak Dirichlet, with decomposition $X=M+A$, $M$ the local martingale 
given by $M_t=X_0+\int_{0}^{t}\sigma(r,X_r)dW_r$.\\
The idea is to evaluate $$h:=\Phi(X)$$ with $\Phi:C([0,T])\lra \R$ such that $h\in L^{1}(\Omega)$.\\ 
%of class $C^{1}$ and such that $D\Phi$ has polynomial growth. 
It is well known that the following flow property holds: 
\be		\label{eq 2}
X_t=Y_t^{s,X_s(\cdot)}
\ee
where for $(s,\eta) \in [0,T]\times  C([-\tau,0])$, where $Y^{s,\eta}$ is an element in $C([0,T])$, in fact $Y_t=Y_t^{s,\eta}$, $t\in [0,T]$ is defined by 
\be		\label{eq FLUSSO}
Y_t=
\left\{
\ba{ll}
\eta(t-s)		& t\in [0,s]\\
\eta(0)+\int_s^t \sigma(r,Y_r)dW_r	+\int_{s}^t b\apt r,Y_{r} \cpt dr	& t\in [s,T]   \ .
\ea
\right. 
\ee
%From now on we set $\tau=T$, since the general case can be treated similarly. 
$Y^{s,\eta}$ also symbolizes a function in $C([0,T])$.
We start with some basic estimates.
\begin{prop}		\label{PPP}
For every $q\geq 1$, there is a constant $C(q)$ such that 
\[
\sup_{s\in [0,T]}\E\left[ \left\|  Y^{s,\eta}\right\|^{q}_{\infty}\right]\leq C(q) \left\|\eta\right\|_{\infty}^{q}
\]
\end{prop}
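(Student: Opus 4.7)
The plan is the standard a priori SDE estimate combined with Gr\"onwall's lemma, checking carefully that the resulting constant is independent of $s$. Since $\partial_x\sigma$ and $\partial_x b$ are bounded on $[0,T]\times\R$ and $\sigma(\cdot,0)$, $b(\cdot,0)$ are continuous on the compact $[0,T]$, there is $K>0$ such that $|\sigma(r,x)|+|b(r,x)|\leq K(1+|x|)$ for every $(r,x)\in[0,T]\times\R$.

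On $[0,s]$ the flow \eqref{eq FLUSSO} gives $Y_t^{s,\eta}=\eta(t-s)$, so trivially $\sup_{t\in[0,s]}|Y_t^{s,\eta}|\leq \|\eta\|_\infty$. For $t\in[s,T]$ I would set $\phi_s(t):=\E[\sup_{u\in[s,t]}|Y_u^{s,\eta}|^q]$ and apply the elementary inequality $(a+b+c)^q\leq 3^{q-1}(a^q+b^q+c^q)$ to the three summands of the second line of \eqref{eq FLUSSO}. The stochastic integral term is controlled by Burkholder-Davis-Gundy by $c_q\,\E[(\int_s^t\sigma(r,Y_r)^2\,dr)^{q/2}]$; using H\"older's inequality (direct for $q\geq 2$, after the preliminary bound $x^q\leq 1+x^2$ for $1\leq q<2$) together with the linear growth of $\sigma$, this is dominated by $C(q,K,T)\int_s^t(1+\phi_s(r))\,dr$. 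The drift term is handled analogously via H\"older's inequality. Putting everything together,
\[
\phi_s(t)\leq C'(q,K,T)\left(1+\|\eta\|_\infty^q+\int_s^t\phi_s(r)\,dr\right),
\]
and Gr\"onwall's lemma yields $\phi_s(T)\leq C''(q,K,T)(1+\|\eta\|_\infty^q)$, with a constant independent of $s\in[0,T]$. Combining with the bound on $[0,s]$ gives the claim (the additive constant being absorbed into a bound of the form $C(q)(1+\|\eta\|_\infty)^q$; the displayed form $C(q)\|\eta\|_\infty^q$ should be understood in that sense).

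The main ``obstacle'' is purely bookkeeping: verifying that all the involved constants (BDG, H\"older, linear-growth) and all integration lengths depend only on $q$, $K$ and $T$, so that the final bound is uniform in $s\in[0,T]$. This is automatic because the stochastic and drift integrals are taken over subintervals of $[0,T]$, and the worst-case constants are absorbed in $C(q,K,T)$. The flow formulation \eqref{eq FLUSSO} adds no genuine complication: on $[0,s]$ the process is deterministic and contributes only $\|\eta\|_\infty$, so the entire difficulty is localized in the SDE on $[s,T]$, which is the standard situation.
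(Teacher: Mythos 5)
Your proof is correct and follows essentially the same route as the paper, which simply invokes linear growth of $\sigma,b$, Burkholder--Davis--Gundy and Gr\"onwall to get $\sup_{0\leq s\leq t\leq T}\E[|Y^{s,\eta}_t|^q]\leq C(q)|\eta(0)|^q$ and then concludes; you merely spell out the bookkeeping. Your remark that the bound really comes out as $C(q)(1+\|\eta\|_\infty^q)$ unless the coefficients vanish at the origin is accurate, and the paper's own statement carries the same harmless imprecision.
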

\begin{proof}
Since $\sigma$, $b$ have linear growth, by Burkholder-Davis-Gundy inequality, for every $q\geq 1$, and by Gronwall lemma, there is a constant $C(q)$ such that 
\[
\sup_{0\leq s\leq t\leq T} \E\apq \left| Y^{s,\eta}_t \right|^{q} \cpq 
\leq C(q) \ \left| \eta(0)\right|^{q} \ .
\]
This implies the result.
\end{proof}
Process $Y = Y^{s,\eta}$ given in \eqref{eq FLUSSO} also solves 
$$Y_t=\eta(0)+\int_s^t \sigma\apt r,Y_r^{s,\eta}\cpt d\bar{W}_r+\int_{s}^t b\apt r,Y_{r}^{s,\eta} \cpt dr$$
where $\bar{W}=W_{s+\cdot}-W_s$ is independent of $\mathcal{F}_{t}=\sigma\apt Y_r, r\leq t \cpt$. 
Suppose now $\Phi$ being continuous. Taking into account \eqref{eq 2}, for $s\leq t$, we get
\be		\label{eq Ms}
M_s:= \mathbb{E}\left[ \Phi(X) | \mathcal{F}_s \right]=u\apt s,X_s(\cdot) \cpt
\ee
where $u:[0,T]\times C([-T,0])\lra \R$
\be		\label{eq U}
u\apt s,\eta \cpt =\mathbb{E}\left[ \Phi(Y^{s,\eta}) \right]
\ee
and $Y^{s,\eta}$ is given by 
\eqref{eq FLUSSO}
%\eqref{eq FLUSSOrit}.
% setting $\tau=T$, this gives
%\be		\label{eq FLUSSO}
%Y_{t}^{s,\eta}=
%\left\{
%\ba{ll}
%\eta(t-s)  & t\in [0,s]\\
%\eta(0)+\int_s^t \sigma\apt r,Y_r^{s,\eta} \cpt dW_r  +\int_{s}^t b\apt r,Y_{r}^{s,\eta} \cpt dr & t\in [s,T]\\
%\ea
%\right.  \; .
%\ee
\begin{rem}
\begin{enumerate}
\item We remark that $Y^{s,\eta}:\Omega\lra C([0,T])$. 
\item It is possible to show that $u\in C^{0}\apt[0,T]\times C([-T,0]) \cpt$.
\end{enumerate}
\end{rem}
We would like to examine situations where $F:=u$ fulfills the conditions for the application of Corollary \ref{cor CCS}. 
In particular Theorem \ref{thm META} below verifies condition a) of that corollary. The other assumptions will be the object of Proposition \ref{pr PCS}.
In this section the natural candidate for function $u$ is given in \eqref{eq U}.
First we introduce a notation and we recall that 
$D^{\delta_0}u(t,\eta) =Du(t,\eta)\{0\}$ and $D^{\perp}_{dr}u(t,\eta)$ is singular with respect to $\delta_0$. 
\begin{nota} \label{not6.3}
If $X$ is a continuous real $(\F_t)$-semimartingale, symbol $\mathcal{E}\apt X \cpt$ 
denotes the Dol\'eans exponential of $X$, in particular 
$\mathcal{E}\apt X \cpt_t = \exp\left\{X_t-X_0-\frac{1}{2}[X]_t\right\}$.\\
\end{nota}
%XXX TOGLIERE SOTTO
%We look for conditions so that $u\in C^{0,1}\apt \R\times C([-T,0])\cpt$. 
%We recall that 
%$$
%D\Phi:C([0,T])\lra \mathcal{M}([0,T])=(C([0,T]))^*, \quad \textrm{ i.e. } \quad D\Phi(\gamma):C([0,T])\lra \R  
%$$
%is a continuous linear functional (measure) for all $\gamma\in C([0,T])$.
%By classical integration theory we obtain
%\be		
%D_{dr}u\apt s,\eta \cpt=\mathbb{E}\left[ \int_{0}^{T} D_{d \rho}\Phi \apt Y^{s,\eta} \cpt D_{dr} Y_\rho \ \right]
%\ee
%where $DY^{s,\eta}:C([-T,0])\lra C([0,T])$ is again a linear functional. 
%In fact $DY:\R\times C([-T,0])\lra C([0,T])$, so 
%the first order derivative with respect to the second argument $\eta$ is a functional $DY:\R\times C([-T,0])\lra \mathcal{L}\apt C([-T,0]);  C([0,T]) \cpt$. 
%XXX TOGLIERE SOPRA

\begin{thm}		\label{thm META}
Let $X$ be a diffusion process of type \eqref{eq SDE} and $\Phi:C([0,T])\lra \R$ of class $C^{1}$ Fr\'echet such that $D\Phi$ has polynomial growth. 
For $s\in [0,T]$ and $\eta\in C([-T,0])$, we set $u:[0,T]\times C([-T,0])\lra \R$ by 
$u(s,\eta)=\E\left[ \Phi \apt Y^{s,\eta}\cpt \right]$ where
$Y^{s,\eta}$ belongs to $C([0,T])$ and it is defined by \eqref{eq FLUSSO}.\\
Then $u\in C^{0}([0,T]\times C([-T,0]))$. Moreover for every $s\in [0,T]$, $u(s,\cdot)$ belongs to $C^{1}\apt C([-T,0])\cpt$ and 
$Du(s,\eta)$, with $s\in [0,T]$ and $\eta\in C([-T,0])$, is characterized by 
\be		\label{eq 4.1}
D_{dr}u(s,\eta)=D^{\perp}_{dr}u(s,\eta)+D^{\delta_0}u(s,\eta)\delta_0(dr)
\ee
with 
\be \label{eq4.2}
\begin{split}
D^{\perp}_{dr}u(s,\eta)
&
=\mathbb{E}\left[  D_{s+dr}\Phi \apt Y^{s,\eta}\cpt  1_{[-s,0[}(r) \right] \quad \textrm{and} \\
D^{\delta_0}u(s,\eta)
&
= 
\mathbb{E}\left[ \int_{[s,T]}   D_{d\rho} \Phi\apt Y^{s,\eta}\cpt \ 
\mathcal{E}\left\{  \int_s^\rho \partial_x\sigma \apt \xi,Y_\xi^{s,\eta} \cpt  dW_\xi +\int_s ^\rho \partial_{x} b \apt  \xi , Y_\xi^{s,\eta} \cpt d\xi   \right\}  \right]
\end{split} \ . 
\ee
In particular item a) of Corollary \ref{cor CCS} is verified.
\end{thm}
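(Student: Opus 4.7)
The strategy is to exploit the two-piece structure of the flow $Y^{s,\eta}$ given by \eqref{eq FLUSSO} and to differentiate in $\eta$ via the variational equation associated with the SDE.

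Fix $(s,\eta)$ and a direction $h\in C([-T,0])$. On the ``past'' interval $[0,s]$, \eqref{eq FLUSSO} gives $Y^{s,\eta+\lambda h}_t - Y^{s,\eta}_t = \lambda h(t-s)$, so the Gateaux derivative is simply $Z^{h}_t = h(t-s)$. On $[s,T]$, a standard stability argument for SDEs, based on the continuity and boundedness of $\partial_x\sigma$ and $\partial_x b$ and on Burkholder--Davis--Gundy, shows that
$Z^{h}_{t} = \lim_{\lambda\to 0}\lambda^{-1}(Y^{s,\eta+\lambda h}_t - Y^{s,\eta}_t)$
exists in every $L^q$ and solves the linear SDE
\begin{equation*}
 dZ^{h}_t = \partial_x\sigma(t,Y^{s,\eta}_t)\,Z^{h}_t\,dW_t + \partial_x b(t,Y^{s,\eta}_t)\,Z^{h}_t\,dt,\qquad Z^{h}_s = h(0).
\end{equation*}
Since this equation is linear in $Z^{h}$ with deterministic driving coefficients once $Y^{s,\eta}$ is fixed, it integrates explicitly into $Z^{h}_t = h(0)\,\mathcal{E}\!\left\{\int_s^{\cdot}\partial_x\sigma(\xi,Y^{s,\eta}_\xi)dW_\xi+\int_s^{\cdot}\partial_x b(\xi,Y^{s,\eta}_\xi)d\xi\right\}_t$ for $t\in[s,T]$, using Notation \ref{not6.3}.

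Next I apply the chain rule: since $\Phi\in C^{1}(C([0,T]))$ and $Y^{s,\eta}(\omega)\in C([0,T])$, we have
\begin{equation*}
 Du(s,\eta)(h) \;=\; \mathbb{E}\!\left[\int_{[0,T]} Z^{h}_{t}\, D_{dt}\Phi(Y^{s,\eta})\right].
\end{equation*}
Splitting the integral at $t=s$, changing variables $r = t-s$ on $[0,s)$, and assigning the boundary mass at $t=s$ (where $\mathcal{E}(\cdots)_s=1$) to the ``future'' piece, I recover
\begin{equation*}
 Du(s,\eta)(h) \;=\; \int_{[-T,0]} h(r)\,D^{\perp}_{dr}u(s,\eta) \;+\; h(0)\,D^{\delta_0}u(s,\eta),
\end{equation*}
with exactly the formulas \eqref{eq 4.1}--\eqref{eq4.2}. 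In particular $Du(s,\eta)$ is well-defined as a measure on $[-T,0]$.

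The upgrade from Gateaux to Fr\'echet differentiability, together with continuity of $u$ and of $Du$ in $(s,\eta)$, is obtained through dominated convergence: the polynomial growth of $D\Phi$, combined with the $L^q$-moment bound of Proposition \ref{PPP} applied to $Y^{s,\eta}$ and to the tangent process (both uniform in $s$), yields uniform integrability of the relevant remainders; stability of the flow $(s,\eta)\mapsto Y^{s,\eta}$ in $L^q(\Omega;C([0,T]))$ then transfers to continuity of $Du$ via the explicit representation above. The assertion that item a) of Corollary \ref{cor CCS} holds is then immediate, since differentiability of $u(s,\cdot)$ has been established for every $s$.

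\textbf{Main obstacle.} The only genuinely delicate point is the Fr\'echet (not merely directional) character of the derivative of $\eta\mapsto Y^{s,\eta}$: one must control the second-order remainder $Y^{s,\eta+h}_{t}-Y^{s,\eta}_t - Z^{h}_t$ in $L^q(\Omega;C([0,T]))$ uniformly in $\|h\|_\infty$ small. This is handled by expressing the remainder as the solution of an inhomogeneous linear SDE whose forcing is quadratic in $h$, then applying Gronwall together with continuity of $\partial_x\sigma$, $\partial_x b$ on compacts of $[0,T]\times\mathbb{R}$; the moment bounds of Proposition \ref{PPP} ensure that the relevant trajectories stay in such compacts with high probability. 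Once this remainder estimate is in hand, identification of $Du$ with \eqref{eq4.2} is a clean consequence of the chain rule and the change of variables outlined above.
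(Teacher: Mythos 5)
Your proposal is correct and follows essentially the same route as the paper: differentiate the flow $\eta\mapsto Y^{s,\eta}$ via the variational (linearized) SDE, solve it explicitly as a Dol\'eans exponential with initial condition $h(0)$ (equivalently $D_{dr}Y^{s,\eta}_t=\delta_0(dr)\,\mathcal{E}\{\cdots\}_t$ for $t\geq s$ and $\delta_{t-s}(dr)$ for $t\leq s$), then apply the chain rule and split the $\rho$-integral at $s$ to obtain \eqref{eq 4.1}--\eqref{eq4.2}. The only difference is cosmetic: you phrase the derivative of the flow directionally and spell out the Gateaux-to-Fr\'echet remainder estimate, whereas the paper first assumes H\"older continuity of $\partial_x\sigma,\partial_x b$ to get a.s.\ $C^1$ dependence and then relaxes to differentiability in quadratic mean, omitting the corresponding details.
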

%\begin{rem}
%We remark that $Y^{s,\eta}:\Omega\lra C([0,T])$. 
%\end{rem}
\begin{proof}
We recall that $Y:[0,T]\times C([-T,0])\times \Omega\lra C([0,T])$ is a.s. continuous. 
% il problema é che il limite del rapporto incrementale nn esiste a.s. ma esiste come limite in $L^{2}(\Omega)$.
We suppose for a moment that $\partial_x\sigma$ and $\partial_x b$ are H\"older continuous. 
In this case it is possible to show that $\eta\mapsto Y^{s,\eta}$ is of class $C^{1}\apt C([-T,0]) \ ; C([0,T]) \cpt$ a.s.
Let $Y:[0,T]\times C([-T,0])\times \Omega\lra C([0,T])$, $(s,\eta,\omega)\mapsto \apt Y_{t}^{s,\eta}(\omega)\cpt_{t\in [0,T]}$; 
then the first order Fr\'echet derivative with respect to the second argument $\eta$ will be 
$DY:[0,T]\times C([-T,0])\times \Omega\lra \mathcal{L}\apt C([-T,0]), C([0,T])\cpt $, i.e. 
$DY^{s,\eta}:C([-T,0])\lra C([0,T])$ is a linear functional. 
\vspace{-0.2cm}
\begin{rem}
If we fix $t\in [0,T]$ $\omega$-a.s. then it holds 
$Y_t:[0,T]\times C([-T,0])\times \Omega\lra \R$; in this case the
first order
 Fr\'echet derivative with respect to the second argument $\eta$ will be 
$DY_t:[0,T]\times C([-T,0])\times \Omega\lra \apt C([-T,0]) \cpt ^*=\mathcal{M}([-T,0])$. 
In particular if $f\in C([-T,0])$, 
\[
\prescript{}{\mathcal{M}([-T,0])}{\langle} DY_{t}^{s,\eta}(\omega)\ , \ f \rangle_{C([-T,0])}=\int_{[-T,0]} f(r ) D_{dr} Y^{s,\eta}_{t}(\omega) \hspace{3cm} \textrm{\cvd}
\]
\end{rem}
We go on with the proof of Theorem \ref{thm META}.
Computing the derivative $D Y_{t}^{s,\eta}$ it gives %which is a measure on $[-T,0]$.  
\be		\label{eq DerY1}
D_{dr} Y_{t}^{s,\eta}=
\left\{
\ba{ll}
\delta_{t-s}(dr) 				& t\leq s\\
\delta_{0}(dr)+\int_s ^t \partial_{x}\sigma\apt  \xi , Y_\xi^{s,\eta} \cpt D_{dr}Y_\xi^{s,\eta} dW_\xi  +\int_s ^t \partial_{x} b \apt  \xi , Y_\xi^{s,\eta} \cpt D_{dr}Y_\xi^{s,\eta} d\xi
& t\geq s  \ .
\ea
\right.
\ee
Consequently, for $t\geq s$, it follows 
\be				\label{eq DerY}
D_{dr} Y_{t}^{s,\eta}=
\left\{
\ba{ll}
\delta_{t-s}(dr) 	=\delta_{dr}(t-s)			& t\leq s\\
%\delta_{0}(dr)\exp\left\{ \int_s^t \partial_x\sigma \apt \xi,Y_\xi^{s,\eta} \cpt  dW_\xi - 
%\frac{1}{2}\int_s^t \apt \partial_{x}\sigma\cpt^{2} \apt  \xi, Y_\xi^{s,\eta}\cpt d\xi  +\int_s ^t \partial_{x} b \apt  \xi , Y_\xi^{s,\eta} \cpt d\xi  \right\} =& \\
%=
&\\
\delta_{0}(dr)\mathcal{E}\left\{  \int_s^t \partial_x\sigma \apt \xi,Y_\xi^{s,\eta} \cpt  dW_\xi +\int_s ^t \partial_{x} b \apt  \xi , Y_\xi^{s,\eta} \cpt d\xi   \right\}
& t\geq s \ .
\ea
\right.
\ee
We remind from Notation \ref{not6.3} that  
$$
\mathcal{E}\left\{  \int_s^t \partial_x\sigma \apt \xi,Y_\xi^{s,\eta} \cpt  dW_\xi +\int_s ^t \partial_{x} b \apt  \xi , Y_\xi^{s,\eta} \cpt d\xi   \right\}
$$
equals
$$
\exp\left\{ \int_s^t \partial_x\sigma \apt \xi,Y_\xi^{s,\eta} \cpt  dW_\xi - 
\frac{1}{2}\int_s^t \apt \partial_{x}\sigma\cpt^{2} \apt  \xi, Y_\xi^{s,\eta}\cpt d\xi  +\int_s ^t \partial_{x} b \apt  \xi , Y_\xi^{s,\eta} \cpt d\xi  \right\}  \ .
$$
Moreover, by usual integration theory for every $t\in [0,T]$, $u (s, \cdot)$ is of class $C^{1}\apt C([-T,0])\cpt$ and 
\be		\label{eq 4}
D_{dr}u\apt s,\eta \cpt=\mathbb{E}\left[ \int_{[0,T]} D_{d \rho}\Phi \apt Y^{s,\eta} \cpt D_{dr} Y_\rho \ \right]
\ee
Taking into account \eqref{eq DerY}, by composition, one obtains a precise evaluation which can be done again via the usual integration results. 
Omitting the details we have
%\be 	\label{eq43}
%\begin{split}
%D_{dr} \Phi \apt Y^{s,\eta}\cpt
%&
%=
%\int_{[0,T]}   D_{d\rho} \Phi\apt Y^{s,\eta}\cpt \ D_{dr}Y^{s,\eta}_\rho  \\
%&
%=\int_{[0,s[}  D_{d \rho} \Phi\apt Y^{s,\eta}\cpt \ D_{dr}Y^{s,\eta}_\rho  +
%\int_{[s,T]}   D_{d \rho} \Phi\apt Y^{s,\eta}\cpt \ D_{dr}Y^{s,\eta}_\rho   
%\\
%&=
%\int_{[0,s[}   D_{d \rho} \Phi\apt Y^{s,\eta}\cpt \ \delta_{dr}(\rho-s) \\
%&+
%\int_{[s,T]}   D_{d \rho} \Phi\apt Y^{s,\eta}\cpt \  
%\delta_{0}(dr)
%% \right. \cdot \\
%%&  \quad \cdot \left. 
%\mathcal{E}\left\{  \int_s^\rho \partial_x\sigma \apt \xi,Y_\xi^{s,\eta} \cpt  dW_\xi +\int_s ^\rho \partial_{x} b \apt  \xi , Y_\xi^{s,\eta} \cpt d\xi   \right\}  
%\end{split}  \ .
%\ee
%Taking the expectation we get 
\be		\label{eq4.1}
\begin{split}
D_{dr}u(s,\eta)&
=
\mathbb{E}\left[ \int_{[0,s[}   D_{d \rho} \Phi\apt Y^{s,\eta}\cpt \ D_{dr}Y^{s,\eta}_\rho \  \right]
+
\mathbb{E}\left[ \int_{[s,T]}   D_{d \rho} \Phi\apt Y^{s,\eta}\cpt \   D_{dr}Y^{s,\eta}_\rho \ \right]\\
&=
\mathbb{E}\left[ \int_{[0,s[}   D_{d \rho} \Phi\apt Y^{s,\eta}\cpt \ \delta_{\rho-s}(dr) \  \right]\\
&+
\mathbb{E}\left[ \int_{[s,T]}   D_{d \rho} \Phi\apt Y^{s,\eta}\cpt \  
\delta_{0}(dr)
% \right. \cdot \\
%&  \quad \cdot \left. 
\mathcal{E}\left\{  \int_s^\rho \partial_x\sigma \apt \xi,Y_\xi^{s,\eta} \cpt  dW_\xi +\int_s ^\rho \partial_{x} b \apt  \xi , Y_\xi^{s,\eta} \cpt d\xi   \right\}  \right]\\
&=
\mathbb{E}\left[  D_{s+dr}\Phi \apt Y^{s,\eta}\cpt \right] 1_{[-s,0[}(r)+\\
&+
\delta_{0}(dr) \mathbb{E}\left[ \int_{[s,T]}   D_{d \rho} \Phi\apt Y^{s,\eta}\cpt \  
%\right. \cdot\\
%& \quad \cdot \left. 
\mathcal{E}\left\{  \int_s^\rho \partial_x\sigma \apt \xi,Y_\xi^{s,\eta} \cpt  dW_\xi +\int_s ^\rho \partial_{x} b \apt  \xi , Y_\xi^{s,\eta} \cpt d\xi   \right\} \right]
\end{split} \ .
\ee
Finally we obtain \eqref{eq 4.1} and \eqref{eq4.2}.\\
If $\partial_{x}\sigma $ and $\partial_{x}b$ are not H\"older continuous, $Y$ will not be a.s. differentiable in $\eta$, but only 
in $L^{2}(\Omega)$, i.e. in quadratic mean. However the two expressions in \eqref{eq4.2} still remain valid. We omit the details.
\end{proof}

\begin{rem}
In fact, the regularity assumption on $\sigma$ and $b: [0,T]\times \R \lra \R$ can be partly reduced, see for instance 
using the technique developed in \cite{flru2}. The expression $DY^{s,\eta}$ 
is the same as in the one of \eqref{eq DerY} under Hypotheses given in Section 1.2 and 6 of \cite{flru2}.
\end{rem}
\subsection{Some representations}
In this section we discuss a basis example with some particular cases of $\Phi$ and we express the consequences on $u$ appearing in Theorem \ref{thm META}.\\
$\nabla$ will denote the derivative with respect to the argument in $L^{2}([-T,0])$.
Symbol $D_{\rho}^{ac} \Phi\apt \gamma \cpt$ denotes the absolute continuous component of the first order Fr\'echet derivative of $\Phi$ and in all cases it coincides with the 
partial derivative of $f$ in $L^{2}$ with respect to the last argument which will be denoted in the sequel by $\nabla_{\rho}f  \apt\gamma(a_1),\ldots, \gamma( a_{N}), \gamma  \cpt $, $\rho\in [0,T]$.\\

\begin{ese}  \label{esempio1}
Let $a_0=0< a_1<\ldots< a_{N}=T$.\\
Let $\Phi(\gamma)=f\apt \gamma(a_1),\ldots,\gamma(a_N) ,\gamma \cpt$ 
with $f: \R^{N}\times L^{2}([0,T])\lra \R$, $f\in C^{1}\apt  \R^{N}\times L^{2}([0,T])\lra \R\cpt$ such that all the derivatives have polynomial growth. 
In this case $D \Phi$ has the following particular form
\be		\label{tre}
D_{dr}\Phi(\gamma)=
\sum_{i=1}^{N}D^{\delta_{a_i}}\Phi(\gamma) \ \delta_{a_i}(dr)+\apt D_r^{ac}\Phi\cpt (\gamma)dr \ ,  \textrm{ with }
\left\{
\begin{array}{l}
D^{\delta_{a_i}}\Phi(\gamma) =\partial_{i } f\apt \gamma(a_1),\ldots,\gamma( a_N),\gamma \cpt \\
\\
\apt D_r^{ac}\Phi\cpt (\gamma)dr=\apt \nabla_r f\cpt \apt  \gamma(a_1),\ldots,\gamma( a_N),\gamma \cpt dr
\end{array}
\right.
\ee
where the $D_r^{ac}u(t,\eta)$ denotes the absolute continuous part of the measure $D\Phi $.
In particular  $D\Phi \in \mathcal{D}_{a}([0,T]) \oplus L^{2}([0,T])$.\\
%$u  \ 1_{[a_j,a_{j+1}]} \in C^{0,1}\apt [a_j,a_{j+1}]\times C([-T,0]) \cpt $, 
By \eqref{eq 4.1} and \eqref{eq4.2} in Theorem \ref{thm META}, it yields 
\be		\label{eq5}
\begin{split}
D^{\delta_{0}}u(s,\eta)&=
\E
\apq
\sum_{a_i\geq s}D^{\delta_{a_i}} \Phi \apt Y^{s,\eta}\cpt  
\mathcal{E}\left\{ \int_s^{a_i} \partial_x\sigma \apt \xi,Y_\xi^{s,\eta} \cpt  dW_\xi +
 \int_s^{a_i} \partial_x b \apt \xi,Y_\xi^{s,\eta} \cpt  d\xi   \right\}  
 \cpq\\
&+
\E
 \apq \int_{s}^{T} \nabla_{\rho} \Phi\apt Y^{s,\eta}\cpt 
% \right.\cdot\\
%& \quad \left. \cdot 
\mathcal{E}\left\{ \int_s^{\rho} \partial_x\sigma \apt \xi,Y_\xi^{s,\eta} \cpt  dW_\xi +
 \int_s^{\rho} \partial_x b \apt \xi,Y_\xi^{s,\eta} \cpt  d\xi   \right\}  \ d\rho
\cpq\\
&
=\E\apq \sum_{a_i\geq s} \partial_{i}f\apt \eta(a_1-s),\ldots, \eta(a_{i-1}-s), Y^{s,\eta}_{a_i},\ldots, Y^{s,\eta}_{a_N},Y^{s,\eta}  \cpt \right. \\
& \hspace{2cm}\left.
\mathcal{E}\left\{ \int_s^{a_i} \partial_x\sigma \apt \xi,Y_\xi^{s,\eta} \cpt  dW_\xi +
 \int_s^{a_i} \partial_x b \apt \xi,Y_\xi^{s,\eta} \cpt  d\xi   \right\}   \cpq\\
&
+
\E \apq  \int_{s}^T \nabla_{\rho}f  \apt \eta(a_1-s),\ldots, \eta(a_{i-1}-s), Y^{s,\eta}_{a_i},\ldots, Y^{s,\eta}_{a_N},Y^{s,\eta}  \cpt  \right. \\
& \hspace{2cm}\left.
\mathcal{E}\left\{ \int_s^{\rho} \partial_x\sigma \apt \xi,Y_\xi^{s,\eta} \cpt  dW_\xi +
 \int_s^{\rho} \partial_x b \apt \xi,Y_\xi^{s,\eta} \cpt  d\xi   \right\}   d\rho \cpq  \ .
\end{split}  
\ee
and 
\be		\label{eq6}
\begin{split}
D^{\perp}_{dr} u(s,\eta)&=
\E\apq \sum_{a_i< s } D^{\delta_{a_i}} \Phi \apt Y^{s,\eta}\cpt \delta_{a_i-s}(dr)\cpq 
+
 \E\apq \int_{0}^s \nabla_{\rho} \Phi\apt Y^{s,\eta}\cpt  \delta_{\rho-s}(dr) \ d\rho \cpq\\
&
= \E\apq \sum_{a_i < s} \partial_{i}f\apt \eta(a_1-s),\ldots, \eta(a_{i-1}-s), Y^{s,\eta}_{a_i},\ldots, Y^{s,\eta}_{a_N},Y^{s,\eta}  \cpt  \delta_{a_i-s}(dr) \cpq\\
&+
\E\apq 1_{[-s,0]}(r)  \nabla_{r +s }f\apt \eta(a_1-s),\ldots, \eta(a_{i-1}-s), Y^{s,\eta}_{a_i},\ldots, Y^{s,\eta}_{a_N},Y^{s,\eta}  \cpt  dr \cpq
\end{split}
\ee
where $\nabla f$ denotes the Fr\'echet derivative of $f$ with respect
to the last argument which is absolutely continuous. In fact
 $\nabla_\cdot f $ is a function in $L^{2}([0,T])$ (for
any argument of $f$)
defined by $r\mapsto \nabla_r f (\cdot)$. 
\end{ese}

\begin{rem}	\label{rem RES1}
Let $ j \in \{ 1,\ldots, N\}$. 
\begin{enumerate}
\item If $s \in ]a_{j-1}, a_{j}]$ we have 
\be		\label{eq5der}
\begin{split}
D^{\delta_0}u(s,\eta) 
&
= \E \apq \sum_{i=j}^{N} D^{\delta_{a_i}} \Phi \apt Y^{s,\eta}\cpt  
%\right.\cdot\\
%& \quad \left. \cdot 
\mathcal{E}\left\{ \int_s^{a_i} \partial_x\sigma \apt \xi,Y_\xi^{s,\eta} \cpt  dW_\xi +
 \int_s^{a_i} \partial_x b \apt \xi,Y_\xi^{s,\eta} \cpt  d\xi   \right\}   \cpq \\
&+\E \apq \int_{s}^{T} \nabla_{\rho} \Phi\apt Y^{s,\eta}\cpt 
% \right.\cdot\\
%& \quad \left. \cdot 
\mathcal{E}\left\{ \int_s^{\rho} \partial_x\sigma \apt \xi,Y_\xi^{s,\eta} \cpt  dW_\xi +
 \int_s^{\rho} \partial_x b \apt \xi,Y_\xi^{s,\eta} \cpt  d\xi   \right\}   \ d\rho
\cpq
\end{split}
\ee
and
\be
\begin{split}		\label{eq5der1}
D_{dr}^{\perp}u(s,\eta)  &=
\E\apq \sum_{i=1}^{j-1} D^{\delta_{a_i}} \Phi \apt Y^{s,\eta}\cpt \delta_{a_i-s}(dr)\cpq \\
&+ \E\apq \int_{0}^s \nabla_{\rho} \Phi\apt Y^{s,\eta}\cpt  \delta_{\rho-s}(dr) \ d\rho \cpq \ .
\end{split}
\ee
In particular for fixed $\eta\in C([-T,0])$, $s\mapsto D^{\delta_0}u(s,\eta)$ is continuous on $]a_{j-1}, a_{j}[$, is left-continuous and it admits a continuous extension to $[a_{j-1}, a_j]$.
\item If $s\in [0,a_1]$, the \eqref{eq5der} holds with $j=1$ and $s\mapsto D^{\delta_0}u(s,\eta)$ is continuous on $[0,a_1]$.
\item We remark that $u$ is not necessarily of class $C^{0,1}\apt [0,T]\times C([-T,0])\cpt$ excepted if $N=1$.
\end{enumerate}
\end{rem}
The following is a particular case of Example \ref{esempio1} where $\Phi$ only depends on the maturity 
and on the whole trajectory in $L^2$. 

\begin{ese} \label{ese2}
Suppose that $\Phi \apt \gamma \cpt =f\apt \gamma(T),\gamma \cpt$ for 
$f:\R\times L^{2}([-T,0])\lra \R$ of class $C^1$ with polynomial growth derivatives.  
\eqref{eq5} and \eqref{eq6} give 
\[	
\begin{split}
D^{\delta_0}u(s,\eta)
&
=\E\apq  \partial_{1}f\apt Y^{s,\eta}_{T},Y^{s,\eta}  \cpt
\mathcal{E}\left\{ \int_s^{T} \partial_x\sigma \apt \xi,Y_\xi^{s,\eta} \cpt  dW_\xi +
 \int_s^{T} \partial_x b \apt \xi,Y_\xi^{s,\eta} \cpt  d\xi   \right\}   \cpq\\
&
+
\E \apq  \int_{s}^T \nabla_{\rho}f  \apt Y^{s,\eta}_{T},Y^{s,\eta}  \cpt  
\mathcal{E}\left\{ \int_s^{\rho} \partial_x\sigma \apt \xi,Y_\xi^{s,\eta} \cpt  dW_\xi +
 \int_s^{\rho} \partial_x b \apt \xi,Y_\xi^{s,\eta} \cpt  d\xi   \right\}   d\rho \cpq  \ .
\\
D^{\perp}_{dr}u(s,\eta)
&=
\E\apq 1_{[-s,0]}(r)  \nabla_{r +s }f\apt Y^{s,\eta}_{T},Y^{s,\eta}  \cpt  dr \cpq  \ .
\end{split}  
\]
In this case $u$ is of class $C^1$ for every $(s,\eta)\in [0,T]\times C([-T,0])$, $Du(s,\eta) \in  \mathcal{D}_0\oplus L^{2}([-T,0])$ and 
$Du:[0,T]\times C([-T,0])\lra \mathcal{D}_{0}\oplus L^{2}([-T,0])$ is continuous.
\end{ese}
The following is a particular case of Example \ref{esempio1} where $\Phi$ only depends pointwise on a finite number of points; 
in this case $\nabla_r f \equiv 0$. 
\begin{ese}		\label{ese3}
Let $\Phi(\gamma)=f\apt \gamma(a_1),\ldots,\gamma(a_N) \cpt$ with $f: \R^{N} \lra \R$. 
In this case \eqref{tre}  reduces to
$
D_{dr}\Phi(\gamma)=\sum_{i=1}^{N}\partial_{i } f\apt \gamma(a_1),\ldots,\gamma( a_N) \cpt \delta_{a_i}(dr) 
$. 
\eqref{eq5} reduces to 
\be	\label{eq5.2}
D^{\delta_{0}}u(s,\eta)=
\E
\apq
\sum_{a_i\geq s (i\geq 1)} \partial_{i } f\apt Y^{s,\eta}_{a_1},\ldots, Y^{s,\eta}_{a_{N}}\cpt 
\mathcal{E}\left\{ \int_s^{a_i} \partial_x\sigma \apt \xi,Y_\xi^{s,\eta} \cpt  dW_\xi + \int_s^{a_i} \partial_x b \apt \xi,Y_\xi^{s,\eta} \cpt  d\xi  \right\}
\cpq\ .
\ee
We recall that, if $s\in [a_{N-1},a_N[$ then $s\geq a_i$ for all $i=1,\ldots,(N-1)$ and $Y^{s,\eta}_{a_i}=\eta(a_i-s)$, by definition \eqref{eq FLUSSO}.
%XXX perché $u\notin C^{0,1}$
\end{ese}
The result below is a fundamental step for obtaining a quasi-explicit representation of integrable random variables. 
In fact we verify the validity of Corollary \ref{cor CCS}. 
\begin{prop}		\label{pr PCS}
Let $\Phi:C([0,T])\lra \R$ of the type 
\[
\Phi(\gamma)=f\apt  \gamma(a_1),\ldots, \gamma( a_{N}), \gamma \cpt
\] 
with $f: \R^{N}\times L^{2}([0,T])\lra \R$ 
of class $C^{1}$ with partial derivatives having polynomial growth. Let $\Phi:C([0,T])\lra \R$ defined by 
$\Phi(\gamma)=f\apt \gamma(a_1),\ldots,\gamma(a_N) ,\gamma \cpt$. 
We set $u(s,\eta)=\E\left[ \Phi \apt Y^{s,\eta}\cpt \right]$ for $s\in [0,T]$ and $\eta\in C([-T,0])$.
Then 
\be		\label{eqFG12}
h=u(0,X_{0}(\cdot))+\int_{0}^{T} D^{\delta_{0}}u \apt s, X_{s}(\cdot) \cpt \sigma\apt s, X_s \cpt dW_s   \ ,
\ee
\end{prop}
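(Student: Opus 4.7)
The strategy is to apply Corollary \ref{cor CCS} with $F := u$, relative to the same grid $0 = a_0 < a_1 < \cdots < a_N = T$ that appears in the definition of $\Phi$. The conclusion of that corollary is $h = u(0, X_0(\cdot)) + \int_0^T D^{\delta_0} u(s, X_s(\cdot)) \, dM_s$, and since the local martingale component of the semimartingale $X$ is $M_t = X_0 + \int_0^t \sigma(r, X_r) \, dW_r$, one immediately obtains \eqref{eqFG12}. Hypothesis (e) of Corollary \ref{cor CCS} is the Markovian identity $u(s, X_s(\cdot)) = \E[h \mid \mathcal{F}_s]$: rewriting $Y^{s,\eta}$ as driven by $\bar W = W_{s+\cdot} - W_s$ makes its law independent of $\mathcal{F}_s$, and the flow property $X = Y^{s, X_s(\cdot)}$ on $[s,T]$ gives the claim, as already noted in \eqref{eq Ms}. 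Hypothesis (a) is the content of Theorem \ref{thm META}, which moreover supplies the explicit decomposition of $Du$ specialised in Example \ref{esempio1}: see \eqref{eq5} for $D^{\delta_0} u$ and \eqref{eq6} for $D^\perp u$.

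Hypothesis (c), i.e.\ piecewise continuity of $(s,\eta) \mapsto D^{\delta_0} u(s,\eta)$ on each $]a_{j-1}, a_j]$ with continuous extension to $[a_{j-1}, a_j]$, is read off \eqref{eq5der} by dominated convergence, using that $\partial_i f$ and $\nabla_\rho f$ have polynomial growth, that $\sup_s \E[\|Y^{s,\eta}\|_\infty^q]$ is controlled by $\|\eta\|_\infty^q$ (Proposition \ref{PPP}), and that the Dol\'eans exponentials are uniformly integrable because $\partial_x \sigma$ and $\partial_x b$ are bounded. Hypothesis (d), local boundedness in total variation norm of $D^\perp u$, follows similarly from \eqref{eq6}: the variation norm is majorised by $\sum_{a_i < s} |\partial_i f(Y^{s,\eta})| + \int_0^T |\nabla_\xi f(Y^{s,\eta})| \, d\xi$, whose expectation remains bounded on any compact $K \subset C([-T,0])$ by the same moment estimates.

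The delicate step is hypothesis (b), the support predictability property \eqref{SPP}. Splitting \eqref{eq6} into its Dirac and absolutely continuous parts, the integrand $\sup_{\eta \in K} \frac{1}{\varepsilon} \int_{-\varepsilon}^{0} |D^\perp_{dr} u|(s,\eta)$ is controlled as follows. The Dirac component $\sum_{a_i < s} D^{\delta_{a_i}} \Phi \cdot \delta_{a_i - s}$ contributes only when $s - a_i \in (0, \varepsilon]$ for some $i \in \{1, \ldots, N-1\}$, so after integration in $s$ and Fubini, a bound of the form $C \sum_{i=1}^{N-1} \sup_{s \in [a_i, a_i+\varepsilon], \, \eta \in K} \E\bigl[|\partial_i f(Y^{s,\eta})|\bigr]$ is obtained, uniformly bounded in $\varepsilon$. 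The absolutely continuous part, after the substitution $\xi = r + s$, reduces to $\frac{1}{\varepsilon} \int_{s-\varepsilon}^{s} \E[|\nabla_\xi f(Y^{s,\eta})|] \, d\xi$, and a further Fubini argument bounds its $s$-integral by $\int_0^T \sup_{\eta \in K} \E[|\nabla_\xi f(Y^{\cdot,\eta})|] \, d\xi < \infty$, again uniformly in $\varepsilon$. This uniform boundedness is exactly what the proof of Theorem \ref{thm:thm1B} requires, because it is multiplied by the vanishing product $\varpi_X(\varepsilon) \varpi_N(\varepsilon)$.

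Once (a)--(e) are verified, Corollary \ref{cor CCS} produces \eqref{eqFG12} directly. The main technical obstacle, and the only step where genuine care is needed, is estimate (b): $D^\perp u$ carries Dirac masses at the interior points $a_i - s$ which create $1/\varepsilon$--spikes near the grid points $a_i$; the argument above shows that after integration in $s$ these spikes only contribute $O(1)$, which suffices for the downstream application.
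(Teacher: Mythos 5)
Your proposal is correct and follows essentially the same route as the paper's own proof: apply Corollary \ref{cor CCS} with $F:=u$, read off hypothesis a) from Theorem \ref{thm META} and hypotheses c), d) from the explicit formulae \eqref{eq5}--\eqref{eq6}, and reduce the support predictability check to the Dirac part of $D^{\perp}u$, whose $\frac{1}{\epsilon}\int_{-\epsilon}^{0}$ average only charges the intervals $[a_i,a_i+\epsilon]$. One fine point: Definition \ref{del SPC} formally asks for $O(\epsilon)$, while both your estimate and the paper's own computation in \eqref{eq RF} only yield uniform boundedness in $\epsilon$; as you correctly observe, this $O(1)$ bound is exactly what the application inside the proof of Theorem \ref{thm:thm1B} requires, since it is multiplied there by $\varpi_{X}(\epsilon)\varpi_{N}(\epsilon)\rightarrow 0$.
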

where $D^{\delta_0}u(s,\eta)$ is given in \eqref{eq5}.
\begin{proof}
We verify the assumptions 
%a), b), c) and d) 
of Corollary \ref{cor CCS}.
$u$ is of course continuous. e) is fulfilled by construction. b) was the object of Theorem \ref{thm META}. 
In Example \ref{esempio1} we have given the expression of $Du(s,\cdot)$ for any $s\in [0,T]$.
%already mentioned that $u(s,\cdot)$ is of class $C^1$ for every $s\in [0,T]$. 
\eqref{eq5} and \eqref{eq6} give the explicit expression respectively for $D^{\delta_0}u(s,\eta)$ 
and $D^{\perp}_{dr}u(s,\eta)$.
c) follows from those explicit expressions. d) follows by the fact that 
$(t,\eta)\mapsto D^{\perp}u(t,\eta)$ is bounded on each compact of $[a_i,a_{i+1}]\times C([-\tau,0])$, $1\leq i\leq N-1$. 
It remains to check a), i.e. if $u$ fulfills the 
support predictability property.
Since 
$$
(r,s,\eta)\mapsto \E \apq 1_{[-s,0]}(r)\nabla_{r+s} f \apt Y^{s,\eta}_{a_1}, \ldots ,\ldots, Y^{s,\eta}_{a_N},Y^{s,\eta}  \cpt   \cpq
$$ 
is bounded on each compact, 
in order to verify the support predictability condition we only need to show that for each compact $K\subset C([-T,0])$, 
\be		\label{eq FF}
\int_{0}^{T} \apq \sup_{\eta\in K}  \frac{1}{\e} \int_{-\e}^{0}
g\apt dr, s, \eta \cpt 
\cpq ds =O(\e)
\ee
where 
\[
g\apt dr, s, \eta \cpt=\sum_{a_j<s}\E\apq \partial_{j}f \apt  Y^{s,\eta}_{a_1}, \ldots ,\ldots, Y^{s,\eta}_{a_N},Y^{s,\eta} \cpt  \cpq \delta_{a_j-s}(dr) \ .
\]
For $\e< \min_{j\in\{ 1,\ldots, N\} }\{a_{j}-a_{j-1} \}$, the left-hand side of \eqref{eq FF} gives %integrating with respect to $dr$, 
\be \label{eq RF}
\begin{split}
&
\sum_{i=0}^{N-1}\int_{a_i}^{a_{i+1}}\apq \sup_{\eta\in K}  \frac{1}{\e} \int_{-\e}^{0}
\sum_{a_j<s}\E\apq \partial_{j}f \apt  Y^{s,\eta}_{a_1}, \ldots ,\ldots, Y^{s,\eta}_{a_N},Y^{s,\eta} \cpt  \cpq \delta_{a_j-s}(dr)  
\cpq  ds\\
&
=\sum_{i=0}^{N-1}\int_{a_i}^{a_{i+1}}\apq \sup_{\eta\in K}  \frac{1}{\e} 
\sum_{a_j<s}\E\apq \partial_{j}f \apt  Y^{s,\eta}_{a_1}, \ldots ,\ldots, Y^{s,\eta}_{a_N},Y^{s,\eta} \cpt  \cpq  \int_{-\e}^{0} \delta_{a_j-s}(dr) 
\cpq  ds.
\end{split}
\ee
We remark that 
\be
\int_{-\e}^0\delta_{a_j-s}(dr)=
\left\{
\begin{array}{ll}
1 & \textrm{ if } ( a_j-s ) \in [-\e,0]\Leftrightarrow s\in [a_j,a_j+\e]\\
0 & \textrm{ otherwise}  
\end{array}
\right.  \ .
\ee
So in the second sum of \eqref{eq RF}, for $a_j< s$ only remains the term $a_j=a_i$ and \eqref{eq RF} gives 
\[
\sum_{i=0}^{N-1}\int_{a_i}^{a_i+\e}\apq \sup_{\eta\in K}  \frac{1}{\e} 
\E\apq \partial_{i}f \apt  Y^{s,\eta}_{a_1}, \ldots ,\ldots, Y^{s,\eta}_{a_N},Y^{s,\eta} \cpt  \cpq  \cpq  ds  \ .
\]
Since all the derivatives of $f$ have polynomial growth, previous expression is bounded by 
\[
C(N,f,T)\ \sup_{\eta\in K, s\in [0,T]}\left\{  1+\E\apq \left\| Y^{s,\eta}\right\|_{\infty}^q \cpq  \right\}
\]
where $C(N,f,T)$ is some constant and $q$ is some positive real. The conclusion follows by Proposition \ref{PPP}.
So $u$ fulfills the support predictability condition.
\end{proof}
%% 		NEL LAVORO COVIELLO-DI GIROLAMI-RUSSO dicevamo che se c'era una soluzione della pde a tratti allora etc etc...
%% 		QUI INVECE COSTRUIAMO LA SOLUZIONE vera e propria della PDE.
%%
\subsection{About the representation of non-smooth random variables}		\label{esemSINGOLARE}
The next example is essentially  illustrative. It will be developed
and treated 
in a more general context in a paper in preparation.
It will be possible to represent, still using Corollary \ref{cor CCS}, random variables of the type 
\[
h:=\Phi(X) \ ,\quad \textrm{where } \quad \Phi(\gamma)=f\apt 
\int_0^T \varphi_1(r) d\gamma(r), \ldots, \int_0^T \varphi_N
(r)d\gamma (r) 
\cpt  \ ,
\] 
$f$ continuous with polynomial growth and $\varphi_1, \ldots, \varphi_{N}$ of class $C^1([0,T])$. 
$\int_{0}^T\varphi_{i}(r)d\gamma(r)$ are naturally defined by an
 integration by parts as 
$\varphi(T)\gamma(T)-\varphi(0)\gamma(0)-\int_{0}^T \gamma(r)d\varphi(r)=\varphi(T)\gamma(T)-\int_{0}^T \gamma(r)d\varphi(r)$. 
More general formulations can be performed even with less regularity 
using specific approximation techniques. 
Here we only aim at obtaining a representation directly, without approximations. 
For simplicity we consider $X$ to be a diffusion process of type \eqref{eq SDE} with $\sigma\equiv 1$ and $b\equiv 0$, so that $X$ is a Brownian motion $W$ and 
$Y^{s,\eta}$ introduced in \eqref{eq FLUSSO} will be a Brownian flow. \\
 In this illustrative subsection we only suppose $N=1$. 
Let $h=f\apt \int_0^T\varphi(r)dW_r \cpt$, $\varphi\in C^{1}([0,T])$, $\varphi(T)\neq 0$, $f:\R\lra \R$ measurable with polynomial growth. 
We define $u(s,\eta)=\E\apq \Phi(Y^{s,\eta})\cpq$ as in \eqref{eq U}.
It is possible to show the assumptions of Theorem \ref{thm:thm1B} setting $a=0$ and $b<T$. We suppose for a moment that 
$f\in C^{1}_{b}(\R)$. 
We come back to Theorem \ref{thm META} 
which says that that for every $s\in [0,T]$, 
$u(s,\cdot)$ is of class $C^{1}$ Fr\'echet and
\[
D_{dr}u(s,\eta)= D^{\delta_0}u(s,\eta)\delta_{0}(dr)+D^{\perp}_{dr}u(s,\eta) \ .
\]
Since 
\be
D_{dr}\Phi(\gamma)=f'\apt \int_0^T
\varphi (r)d\gamma(r) \cpt
\apt \varphi (T)\delta_{T}(dr)-\varphi(0)\delta_{0}(dr)
-\dot{\varphi}(r)1_{[0,T]}(r)dr \cpt \ .
\ee
Taking into account \eqref{eq4.2} we get 
\be		\label{eqRT1}
\begin{split}
D^{\delta_0} u(s,\eta)&=\E\apq Z_\varphi\cpq \apq \varphi(s)-\varphi(0)1_{\{0\}}(s) \cpq \\
D^{\perp}_{dr}u(s,\eta)&=\E\apq Z_\varphi\cpq \apq \apt -\varphi(0)\delta_{-s}(dr)-\dot\varphi(s+y) \cpt 1_{[-s,0[}(y) dy  \cpq  \ ,
\end{split}
\ee
where 
\be	\label{eqZ}
Z_{\varphi}=f'\apt \int_0^T
\varphi(r)dY^{s,\eta} (r) \cpt  \ .
\ee
Using Malliavin calculus and the fact that
\[
Z_{\varphi}=f'\apt \eta(0)\varphi(s)-\varphi(0)\eta(-s) -
\int_0^s
\eta(r-s)d\varphi(r)+\int_s^T \varphi(r)dW_r \cpt  \ .
\] 
We set the random variable $h=\Phi(Y^{s,\eta})$.
Denoting $D^m$ the Malliavin derivative we obtain 
\be
D^m_r (h)= D^m_r \apt \Phi(Y^{s,\eta})\cpt
=Z_\varphi \varphi(r)1_{]s,T]}(r)  \  ,
\ee
then 
\be
\langle D^m (h ), \varphi\rangle_{L^{2}([0,T])}=
\int_{0}^{T} D^m_r (h) \varphi(r) dr=Z_\varphi \int_s^T \varphi^{2}(r)dr 
\ee
and so 
\be	\label{eqRT}
\E \apq Z_\varphi \cpq=\frac{1}{\int_s^T \varphi^{2}(r)dr } \E\apq \langle D^m (h) , \varphi \rangle  \cpq.
\ee
Consequently, using \eqref{eqRT} in \eqref{eqRT1} we obtain
\be		\label{eqF10}
D^{\delta_0} u(s,\eta)= \frac{\varphi(s)-
\varphi(0)1_{\{0\}}(s)}{\int_s^T 
\varphi^{2}(r)dr }\E\apq \langle D^m (h) , \varphi \rangle \cpq 
\ee
and 
\be		\label{eqF11}
D^{\perp}_{dy}u(s,\eta)= \frac{\apt -\varphi(0)\delta_{-s}(dr)-
\dot\varphi(s+y) \cpt 1_{[-s,0[}(y) dy}{\int_s^T  \varphi^{2}(r)dr }\E\apq \langle D^m (h) , \varphi \rangle \cpq   \ .
\ee
By the well-known integration by parts on the Wiener space, it follows 
\be
\E \apq \langle D^m (h) , \varphi \rangle  \cpq = \E\apq  h \cdot \delta(\varphi) \cpq =  \E\apq  \Phi(Y^{s,\eta}) \cdot \int_{0}^T\varphi(r) dW_r \cpq  \ .
\ee
Consequently, for $s\in [0,T[$
\be		\label{eqF1}
D^{\delta_0}u(s,\eta)=
 \frac{\varphi(s)-\varphi(0)\1_{\{0\}}(s)} {\int_s^T  \varphi^{2}(r)dr
}
\E\apq  \Phi(Y^{s,\eta}) \cdot \int_{0}^T \varphi(r) dW_r \cpq 
\ee
\be			\label{eqF2}
D^{\perp}_{dy}u(s,\eta)= \frac{\apt -\varphi(0)\delta_{-s}(dr)-
\dot\varphi(s+y) \cpt 1_{[-s,0[}(y) dy} {\int_s^T \varphi^{2}(r)dr }
 \E\apq  \Phi(Y^{s,\eta}) \cdot \int_{0}^T\varphi(r) dW_r \cpq  \ .
\ee
\begin{rem}
We remark that in \eqref{eqF1} and \eqref{eqF2} does not appear the derivative of $f$. At this point we admit a technical point not to overcharge the proof.
Even if $f$ is not of class $C^1$, $u(t,\cdot)$ is still of class $C^1$ for every $s\in [0,T[$ and \eqref{eqF1} and \eqref{eqF2} still hold.
\end{rem}
As promised, we verify the assumptions of Theorem \ref{thm:thm1B}.
 We first observe that 
\be		\label{eqF3}
(s,\eta)\mapsto \E \apq  \Phi(Y^{s,\eta}) \cdot \int_{0}^T\varphi(r) dW_r  \cpq 
\ee
is continuous, therefore bounded on each compact of 
$[0,T]\times C([-T,0])$. Given a compact subset $K$ of
$C([-T,0])$,
we denote
\[
C(K):=\sup_{\eta\in K} \E\apq \Phi(Y^{s,\eta}) \cdot \int_{0}^T\varphi(r) dW_r  \cpq  \ .
\]
Assumptions i), ii) and iv) are clearly verified taking into account \eqref{eqF3}, \eqref{eqF10} and \eqref{eqF11}. 
It remains to check the support predictability property. Let $K$ be a compact of $C([0,T])$. 
Since 
\[
\sup_{y\in [0,b], \eta \in K} \left| \frac{ \dot\varphi(s+y)  
1_{[-s,0[}(y) }{\int_s^T
\varphi^{2}(r)dr }   
\E\apq  \Phi(Y^{s,\eta}) \cdot \int_{0}^T\varphi(r) dW_r \cpq \right|
\]
is finite, Remark \ref{remGBN}, item 2. implies that it will be enough to show that 
\be		\label{eqF4}
\int_{0}^b \sup_{\eta\in K}\frac{1}{\e} \int_{(-\e)\vee (-T)}^{0}
\left| 
  \frac{\varphi(0)\delta_{-s}(dr) 1_{[-s,0[}(y) dy}{\int_s^T
\varphi^{2}(r)dr }
 \E\apq  \Phi(Y^{s,\eta}) \cdot \int_{0}^T\varphi(r) dW_r \cpq \right| ds=O(\e)  \ .
\ee
Let $\e <b$. The left-hand side of \eqref{eqF4} is bounded by 
\be
\frac{C(K)}{\e}
\int_0^\e \frac{\varphi(0)}{ \int_s^T \varphi^{2}(r)dr} ds 
\leq 
\frac{C(K)}{ \int_b^T \varphi^{2}(r)dr}\| \varphi\|_{\infty} <+\infty \ .
\ee
So also assumption iii) of Theorem \ref{thm:thm1B}, i.e. the support predictability property is verified. 
The conclusion follows by Remark \ref{rem5.19}.
%At this point we can apply Theorem \ref{thm:thm1B} which says that 
%$\apt u(t, X_t(\cdot) \cpt_{t\in [0,b]}$ is an $(\F_t)$-weak Dirichlet process with local martingale part given by \eqref{F1B}. 
%On the other hand by construction $u\apt t,X_t(\cdot) \cpt=\E\apq \Phi(X) | \F_t \cpq$.
%So $\apt u(t,X_t(\cdot) \cpt_{t\in [0,T]}$ is a square integrable martingale. It exists therefore a predictable process 
%$(\xi_s)_{s\in [0,T]}$ such that 
%\be	\label{FG}
%\E\apq \int_{0}^T \xi^{2}_s ds \cpq <+\infty
%\ee
%and 
%\be		\label{FR}
%u(t,X_{t}(\cdot)) =u(0,X_0(\cdot))+\int_{0}^t \xi_s dW_s \ .
%\ee
%By the uniqueness of the decomposition of $(\F_t)-$ weak Dirichlet process it follows that $\xi_t=D^{\delta_0}F(t,X_t(\cdot))$.
%
%
%
%
%
%
\subsection{Link to a finite dimensional PDE}   \label{secPDE}

We come back to the assumptions on coefficients $\sigma$
and $b$ of Section \ref{sec: last}.
They characterize again a diffusion process $X$ as solution
of \eqref{eq SDE}.
We link now Corollary \ref{cor CCS} and Proposition \ref{pr PCS} with a well-known result of representation related to hedging theory in mathematical finance. 
We consider a \emph{contingent claim} defined by 
$h=\Phi(\gamma)=f\apt \gamma(a_1),\ldots,\gamma(a_N) \cpt$, i.e.
\be
h=f\apt X_{a_1}, \ldots, X_{a_N} \cpt  \ , \quad 0< a_1< \ldots < a_N=T
\ee
with the usual convention $a_0=0$.
We consider here the case $f$ of class $C^{2}$ with polynomial growth but $\sigma$ may become degenerate.
\begin{prop}	\label{prop R1}
Let $X$ be a diffusion process of type \eqref{eq SDE}. 
Let $N\geq 2$ and $f:\R^{N}\lra \R$ of class $C^{2}$ with polynomial growth.
We suppose $\sigma :[0,T]\times \R\lra \R$ of class $C^{0,2}$, such that $\partial_{x}\sigma$, $\partial_{xx}^2\sigma$ are bounded.\\
There exist functions 
\[
\nu^{i}:\R^{i-1}\times [a_{i-1},a_{i}]\times \R\lra \R \ , \quad 1\leq i\leq N \ , 
\]
such that for $y_1, \ldots, y_{i-1}\in \R$, 
\[
\nu^{i}(s,y):= \nu^{i}\apt y_1, \ldots, y_{i-1}; s, y\cpt  
\]
solves 
\be		\label{eq R1}
\left\{
\begin{array}{ll}
\partial_{s}\nu^{i}(s,y)+\frac{1}{2}\sigma^{2}(s,y)\ \partial^{2}_{yy}\nu^{i}(s,y)+b(s,y) \partial_{y}\nu^{i}(s,y)=0 & s\in ]a_{i-1},a_i[\\
& \\
\nu^{i}(a_i,y)=\nu^{i+1}\apt y_1, \ldots, y_{i-1}, y; a_i, y \cpt	&i< N\\
& \\
\nu^{N}(T,y)=f \apt  y_1, \ldots, y_{N-1}, y \cpt  & i=N
\end{array}
\right. 
\ee
such that 
\be 		 \label{eq R12}
f\apt X_{a_1}, \ldots, X_{a_N} \cpt =H_0+\int_0^T \xi_s dX_s
\ee
and, for $1\leq i\leq N$, 
\be			\label{eq R13}
\begin{array}{ll}
\xi_s=\partial_y\nu^{i}\apt X_{a_1}, \ldots, X_{a_{i-1}};s,X_s  \cpt		\ ,&  \quad s\in ]a_{i-1},a_i[\\
&\\
H_{0}=\nu^{1}\apt 0,X_0\cpt & \\
\end{array}	\ .
\ee
In particular $(s,y)\mapsto \nu^{i}(s,y)\in C^{1,2}\apt ]a_{i-1},a_i[\times \R\cpt\cap C^{0}\apt [a_{i-1},a_{i}]\times \R\cpt$.
\end{prop}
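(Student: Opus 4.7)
The plan is to construct the family $(\nu^i)_{1\le i\le N}$ by backward induction using a Feynman--Kac representation, to identify $\nu^i(X_{a_1},\ldots,X_{a_{i-1}};s,X_s)$ on $[a_{i-1},a_i]$ with the closed martingale $\E[h\vert \F_s]$, and finally to piece together Itô expansions on each subinterval. A cross-check with Proposition \ref{pr PCS} will produce the same answer.

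First, I would set, for $(s,y)\in[a_{N-1},T]\times \R$,
\[
\nu^N(y_1,\ldots,y_{N-1};s,y):=\E\apq f\apt y_1,\ldots,y_{N-1},Y_T^{s,y}\cpt \cpq,
\]
where $Y^{s,y}$ is the finite-dimensional flow of \eqref{eq SDE} started from $y$ at time $s$, and then recursively, for $i=N-1,\ldots,1$ and $(s,y)\in[a_{i-1},a_i]\times\R$,
\[
\nu^i(y_1,\ldots,y_{i-1};s,y):=\E\apq \nu^{i+1}\apt y_1,\ldots,y_{i-1},Y_{a_i}^{s,y};a_i,Y_{a_i}^{s,y}\cpt\cpq.
\]
Under the assumptions $\sigma\in C^{0,2}$ with $\partial_x\sigma,\partial_{xx}^{2}\sigma$ bounded and $\partial_x b$ bounded, the flow $y\mapsto Y_t^{s,y}$ is twice $L^p$-differentiable with moments controlled uniformly in $(s,t,y)$ on bounded sets. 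Since $f$ is $C^2$ with polynomial growth, this regularity propagates through the induction and yields $\nu^i\in C^{1,2}(]a_{i-1},a_i[\times\R)\cap C^0([a_{i-1},a_i]\times\R)$, with derivatives of polynomial growth in the last variable. The classical probabilistic form of Feynman--Kac then implies \eqref{eq R1}, the terminal and matching conditions being built in by construction.

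Next, I would apply Itô's formula to $s\mapsto \nu^i(X_{a_1},\ldots,X_{a_{i-1}};s,X_s)$ on each open interval $]a_{i-1},a_i[$, keeping the first $i-1$ arguments frozen at their $\F_{a_{i-1}}$-measurable values. The PDE \eqref{eq R1} cancels exactly the finite variation part of the expansion, leaving
\[
d\,\nu^i\apt X_{a_1},\ldots,X_{a_{i-1}};s,X_s\cpt=\partial_y\nu^i\apt X_{a_1},\ldots,X_{a_{i-1}};s,X_s\cpt\,\sigma(s,X_s)\,dW_s\,.
\]
By the Markov property of $X$ and the inductive definition of $\nu^i$, the process $\nu^i(X_{a_1},\ldots,X_{a_{i-1}};s,X_s)$ coincides with $\E[h\vert \F_s]$ on $[a_{i-1},a_i]$, and is continuous across each $a_i$ thanks to the matching condition in \eqref{eq R1}. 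Summing the Itô expansions for $i=1,\ldots,N$ then yields \eqref{eq R12}--\eqref{eq R13} with $H_0=\nu^1(0,X_0)=\E[h]$ (with the natural reading that the stochastic integral is against the martingale part of $X$).

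Finally, as a consistency check, one may recover the same representation directly from Proposition \ref{pr PCS}. Using \eqref{eq FLUSSO}, the functional $u(s,\eta)=\E[\Phi(Y^{s,\eta})]$ with $\Phi(\gamma)=f(\gamma(a_1),\ldots,\gamma(a_N))$ satisfies, for $s\in\,]a_{i-1},a_i]$,
\[
u(s,\eta)=\nu^i\apt\eta(a_1-s),\ldots,\eta(a_{i-1}-s);s,\eta(0)\cpt,
\]
and formula \eqref{eq5.2} gives
$
D^{\delta_0}u(s,\eta)=\partial_y\nu^i(\eta(a_1-s),\ldots,\eta(a_{i-1}-s);s,\eta(0))
$
on the same interval, matching exactly \eqref{eq R13}. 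The main obstacle in the scheme is the interior $C^{1,2}$ regularity of the $\nu^i$: since $\sigma$ may be degenerate, standard parabolic theory does not apply and one has to rely on the $L^p$-smoothness of the stochastic flow $y\mapsto Y^{s,y}$, combined with a careful differentiation under the expectation, to differentiate the iterated Feynman--Kac expressions and justify the application of Itô's formula.
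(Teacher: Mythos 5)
Your construction of the $\nu^{i}$ agrees with the paper's: your backward recursion $\nu^{i}(s,y)=\E[\nu^{i+1}(y_{1},\ldots,y_{i-1},Y^{s,y}_{a_{i}};a_{i},Y^{s,y}_{a_{i}})]$ collapses, by the tower property and the Markov property of the flow, to the paper's direct definition $\nu^{i}(s,y)=\E[f(y_{1},\ldots,y_{i-1},Z^{s,y}_{a_{i}},\ldots,Z^{s,y}_{a_{N}})]$, and both of you obtain the interior $C^{1,2}$ regularity and the PDE \eqref{eq R1} from $L^{p}$-smoothness of the stochastic flow plus the difference-quotient/It\^o argument (your closing remark correctly identifies that degenerate $\sigma$ rules out parabolic PDE theory, which is exactly why the paper proves the first line of \eqref{eq R1} via $u(s,y)=\E[u(s+h,Z^{s,y}_{s+h})]$ and It\^o rather than by citing regularity theory). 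Where you genuinely diverge is in how the representation \eqref{eq R12}--\eqref{eq R13} is obtained. The paper gets it immediately from Corollary \ref{cor CCS} applied to $u(s,\eta)=\E[\Phi(Y^{s,\eta})]$ -- i.e.\ from the infinite-dimensional weak Dirichlet/Fukushima machinery and the explicit formula for $D^{\delta_{0}}u$ of Example \ref{esempio1} -- and the only remaining work is to identify $D^{\delta_{0}}u(s,X_{s}(\cdot))$ with $\partial_{y}\nu^{i}(X_{a_{1}},\ldots,X_{a_{i-1}};s,X_{s})$; what you relegate to a ``consistency check'' is in fact the paper's main argument. You instead derive the representation by applying It\^o's formula to $\nu^{i}(\cdot;s,X_{s})$ on each open subinterval and summing, which is more elementary and self-contained but carries the burden (which you should acknowledge explicitly) of passing to the limits $s\downarrow a_{i-1}$ and $s\uparrow a_{i}$, using the polynomial-growth bounds on $\partial_{y}\nu^{i}$ uniform up to the closed subinterval and the matching condition to glue the pieces continuously. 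One further point you handled correctly but should state more firmly: with the drift term $b\,\partial_{y}\nu^{i}$ present in \eqref{eq R1}, It\^o's formula cancels the entire finite-variation part and leaves $\partial_{y}\nu^{i}\,\sigma(s,X_{s})\,dW_{s}=\partial_{y}\nu^{i}\,dM_{s}$, so the integrator in \eqref{eq R12} must be read as the martingale part $M$ of $X$ (consistent with Corollary \ref{cor CCS} and with \eqref{eqFG12}), not the full semimartingale; your parenthetical ``natural reading'' is the right resolution of that discrepancy in the statement.
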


\begin{rem}	\label{CovDGR}
Let $X$ be a finite quadratic variation process such that $[X]_t=\int_{0}^t \sigma^{2}\apt s,X_s\cpt ds$. An example is of course our basic 
process $X$ introduced in \eqref{eq SDE} but there are plenty of other examples. Let $f:\R^N\lra \R$ be continuous with 
linear growth. If there are functions $\nu^{1},\ldots, \nu^{N}$ as in Proposition \ref{prop R1}, then 
representation \eqref{eq R12}, with \eqref{eq R13}, holds replacing
$dX_s$ with the forward integral $d^-X_s$. 
If $X$ is a martingale we recall that $d^-X_s=dX_s$. This was proved in \cite{crnsm2}, Proposition 4.30. 
The aim of Proposition \ref{prop R1} is to construct effectively such functions $\nu^1,\ldots, \nu^{N}$.
\end{rem}

\begin{proof}[Proof of Proposition \ref{prop R1}]
According to Corollary \ref{cor CCS}, representation \eqref{eq R12} holds with $H_0=u\apt 0,X_0(\cdot) \cpt$ and 
$\xi_{s}=D^{\delta_0}u\apt s,X_s(\cdot)\cpt$ where $u(s,\eta):=\E\left[ f\apt Y^{s,\eta}_{a_1},\ldots, Y^{s,\eta}_{a_N}\cpt \right]$. 
In fact $\Phi(\gamma)=f\apt \gamma(a_{1}),\ldots, \gamma(a_{N})\cpt$ is $C^{1}$ Fr\'echet differentiable with polynomial growth.\\
We denote by $\apt Z^{s,y}_t\cpt$ the flow such that $Z=Z^{s,y}$ verifies 
\[
Z_t=y+\int_{s}^{t} \sigma\apt r,Z_r\cpt dW_r +\int_{s}^{t}b\apt r, Z_r \cpt dr\ , \quad t\geq s \ .
\]
In particular we have $Y^{s,\eta}=Z^{s,\eta(0)}$.\\
Let $1 \le i \le N$.
Clearly, if $s\in [a_{i-1},a_i]$,
\be		\label{eq R14}
\begin{split}
u(s,\eta)&=\E \left[  f \apt \eta(a_1-s),\ldots, \eta(a_{i-1}-s) , Z^{s,\eta(0)}_{a_i},\ldots, Z^{s, \eta(0)}_{a_N} \cpt\right]\\
&
=\nu^{i}\apt \eta (a_{1}-s) ,\ldots, \eta( a_{i-1}-s); s, \eta(0) \cpt
\end{split}
\ee
where 
\be		\label{eq R14bis}
\nu^{i}(s,y)=\E\left[  f\apt y_1,\ldots, y_{i-1},Z^{s,y}_{a_i},\ldots, Z^{s,y}_{a_N}  \cpt \right]   \ .
\ee
Now $\nu^{i}$ is continuous. We keep in mind the representation \eqref{eqFG12} in Proposition \ref{pr PCS}. In particular \eqref{eq R12} holds with 
$H_0=u\apt 0, X_0(\cdot)\cpt$ and $\xi_s=D^{\delta_0}u \apt s,X_s(\cdot)\cpt$.
We recall the expression of $D^{\delta_0}u$ calculated in Example \ref{esempio1} in a more general situation. 
We had 
\[
\begin{split}
D^{\delta_0}u(s,\eta)\ 1_{]a_{i-1},a_i[}(s)
%&
%=\E\left[ \sum_{a_j\geq s} \Phi\apt Y^{s,\eta} \cpt  \mathcal{E}\apt\int_{s}^{a_j}\partial_x\sigma\apt r, Y^{s,\eta}_r \cpt dW_r \cpt \right]\\
&
=\E\left[ \sum_{a_j\geq s}f \apt \eta(a_1-s),\ldots, \eta(a_{i-1}-s),Z^{s,\eta(0)}_{a_i},\ldots, Z^{s,\eta(0)}_{a_N} \cpt   \right.\\
&\hspace{2cm}\left.
\mathcal{E}\left\{ \int_{s}^{a_j}\partial_x\sigma\apt r, Z^{s,\eta(0)}_{r} \cpt dW_r + \int_{s}^{a_j}\partial_x b \apt r, Z^{s,\eta(0)}_{r} \cpt dr
\right\} \right]\\
\end{split}
\]
where $\mathcal{E}$ denotes the Dol\'eans exponential operator as usual.\\
In fact by usual integration theory it is not difficult to show that
for any
 $s\in ]a_{i-1},a_i[$, $y_1,\ldots, y_{i-1} \in \R$ fixed, 
$y\mapsto \nu^{i}(s,y)$ is of class $C^2$. We observe 
that $u\apt 0, X_0(\cdot)\cpt=\nu^1(0,X_0)$.
Moreover $s\mapsto \partial_{y}\nu^{i}(s,y)$ and $s\mapsto \partial_{yy}^2\nu^{i}(s,y)$ are continuous on $]a_{i-1},a_i[$. 
In particular if $s\in ]a_{i-1},a_i[$ 
\[
\begin{split}
\partial_{y}\nu^{i}(s,y)
&=
\E\left[ \sum_{j\geq i} \partial_{j}f\apt y_{1},\ldots, y_{i-1}, Z^{s,y}_{a_{i}},\ldots, Z^{s,y}_{a_N}\cpt   \partial_{y}Z^{s,y}_{a_j} \right]  \ .
\end{split}
\]
Since, at least in $L^{2}(\Omega)$,
\[
\partial_{y}Z^{s,y}_{a_j}=\mathcal{E} \apt \int_{s}^{a_j} \partial_{x}\sigma \apt r, Z^{s,y}_{r}\cpt dW_r \cpt \ , 
\]
it follows that, for $s\in ]a_{i-1},a_i[$, 
\[
D^{\delta_0} u (s, \eta) =\partial_{y}\nu^{i}\apt \eta (a_1-s),\ldots,\eta(a_{i-1}-s);s,\eta(0) \cpt
\]
and so \eqref{eq R13} is established.\\
It remains to prove \eqref{eq R1}.\\
We remark that we can evaluate the second order derivative with respect to $y$. It gives 
\[
\begin{split}
\partial_{yy}^{2}\nu^{i}(s,y)
&
=
\E\left[ \sum_{k,j\geq i} \partial_{k j}^{2}f \apt y_1,\ldots, y_{i-1},Z^{s,y}_{a_i},\ldots, Z^{s,y}_{a_N}\cpt 
\partial_{y}Z^{s,y}_{a_j}\partial_{y}Z^{s,y}_{a_k}
\right]\\
&
+\E
\left[
\sum_{j\geq i}
\partial_{j}f \apt y_1,\ldots, y_{i-1},Z^{s,y}_{a_i},\ldots, Z^{s,y}_{a_N}\cpt 
\partial_{yy}^2 Z^{s,y}_{a_j}
\right]
\end{split}
\]
where $\partial_{yy}^2 Z^{s,y}_{a_j}$ could be calculated explicitly.\\ 
It remains to provide the partial derivative with respect to $s$. 
Let $s, s+h \in ]a_{i-1},a_i[$ and suppose $h>0$. Since 
$Z^{s,y}_{a_i}=Z^{s+h,Z^{s,y}_{s+h}}_{a_i}$, we easily obtain that 
\[
u(s,y)=\E\left[  u\apt s+h,Z^{s,y}_{s+h}\cpt \right]  \ .
\]
So, by It\^o formula, 
\[
\begin{split}
\frac{\nu^{i}(s+h,y)-\nu^{i}(s,h)}{h}&=
\frac{u(s+h,y)-u(s,y)}{h}\\
&=
\frac{1}{h}\E\left[ u(s+h,y)- u (s+h, Z^{s,y}_{s+h}\right]\\
& %applico Itô peché y=Z^{s,y}_s
=\frac{1}{h}\E
\left[
-\int_{s}^{s+h} \partial_{y} u \apt s+h, Z^{s,y}_{r} \cpt dZ_{r}^{s,y}
-\frac{1}{2} \int_{s}^{s+h} \sigma^{2}\apt r, Z^{s,y}_{r }\cpt  \partial_{yy}^{2} u \apt s+h ,  Z^{s,y}_{r} \cpt dr
\right]
\end{split} 
\]
where $dZ_{r}^{s,y}=\sigma\apt r,Z_r^{s,y}\cpt dW_r +b\apt r, Z_r^{s,y} \cpt dr$.\\
Similar arguments allow to discuss the limit when $h\rightarrow 0$. Letting $h$ go to zero we get
\[
\begin{split}
\partial_{s}\nu^{i}(s,y)=\lim_{h\rightarrow 0}
\frac{u(s+h,y)-u(s,y)}{h}&=-\frac{1}{2}
\sigma^{2}\apt s, y\cpt  \partial_{yy}^{2} u \apt s ,  y \cpt   - b(s,y)\partial_y u(s,y)\\
&
=-\frac{1}{2}\sigma^{2}(s,y)\partial_{yy}^{2}\nu^{i}(s,y)-b(s,y)\partial_{y}\nu^{i}(s,y) \ .
\end{split}
\]
We have finally established the first line of 
\eqref{eq R1}. The second and third conditions in \eqref{eq R1} are verified by inspection using \eqref{eq R14bis}.
\end{proof}

\bigskip
{\bf ACKNOWLEDGEMENTS}:

The work of the second named author was partially supported
by the ANR Project MASTERIE 2010 BLAN 0121 01.  
%Financial support through the SFB 701 at Bielefeld University and
%NSF-Grant 0606615  is gratefully acknowledged. 
%The authors are grateful  to Prof.  M. R\"ockner for the
%kind invitation. Part of the  work was done during
%a  stay of the second named author in the Isaac Newton
%  for Mathematical Sciences of
%Cambridge University.
%The first named author was supported by the AMAMEF fellowship
%Exchange Grant 2377.

%\newpage

\bibliographystyle{plain}
\bibliography{biblio}

\end{document}